\numberwithin{equation}{section}
\newtheorem{thm}{Theorem}[section]
\newtheorem{cor}[thm]{Corollary}
\newtheorem{no}[thm]{Notation}
\newtheorem{lem}[thm]{Lemma}
\newtheorem{prop}[thm]{Proposition}
\newtheorem{defn}[thm]{Definition}
\newtheorem{rem}[thm]{Remark}
\newtheorem{Ques}[thm]{Question}
\newcommand{\coker}{\mbox{Coker}\,}
\newcommand{\Hom}{\mbox{Hom}\,}
\newcommand{\Ext}{\mbox{Ext}\,}
\newcommand{\Tor}{\mbox{Tor}\,}
\newcommand{\Ass}{\mbox{Ass}\,}
\renewcommand{\dim}{\mbox{dim}\,}
\newcommand{\pd}{\mbox{pd}\,}
\newcommand{\id}{\mbox{id}\,}
\newcommand{\cid}{\mbox{cid}\,}
\newcommand{\cpd}{\mbox{cpd}\,}
\newcommand{\cfd}{\mbox{cfd}\,}
\newcommand{\ciD}{\mbox{ciD}\,}
\newcommand{\cpD}{\mbox{cpD}\,}
\newcommand{\cfD}{\mbox{cfD}\,}
\newcommand{\scd}{\mbox{scd}\,}
\newcommand{\std}{\mbox{std}\,}
\newcommand{\sci}{\mbox{sci}\,}
\newcommand{\scf}{\mbox{scf}\,}
\newcommand{\scp}{\mbox{scp}\,}
\newcommand{\ci}{\mbox{ci}\,}
\newcommand{\cf}{\mbox{cf}\,}
\newcommand{\cp}{\mbox{cp}\,}
\newcommand{\scot}{\mbox{scot}\,}
\newcommand{\stf}{\mbox{stf}\,}
\newcommand{\Cot}{\mbox{cot}\,}
\newcommand{\fd}{\mbox{fd}\,}
\newcommand{\mcm}{\mbox{MCM}\,}
\renewcommand{\H}{\mbox{H}}
\newcommand{\fm}{\mathfrak{m}}
\newcommand{\fp}{\mathfrak{p}}
\begin{document}
\bibliographystyle{amsplain}


\title[Subcategories and homological dimensions related to Semidualizing modules]
{Some new Subcategories and homological dimensions related to Semidualizing modules}

\bibliographystyle{amsplain}

     \author[M. Rahmani]{Mohammad Rahmani}
     \author[A.- J. Taherizadeh]{Abdoljavad Taherizadeh}

\address{Faculty of Mathematical Sciences and Computer,
Kharazmi University, Tehran, Iran.}

\email{m.rahmani.math@gmail.com}
\email{taheri@khu.ac.ir}

\keywords{Semidualizing modules, dualizing modules, G$_C$-inlective modules, G$_C$-flat modules, cotorsion modules, strongly torsionfree modules, strongly copure injective modules, strongly copure injective modules}
\subjclass[2010]{16D50, 13D05, 16E10, 13C05, 13D02}


\begin{abstract}
Let $R$ be a Noetherian ring and let $C$ be a semidualizing $R$-module. In this paper, by using the semidualizing modules,
we define and study new classes of modules and homological dimensions and investigate the relations between them. In parallel, we obtain some necessary and sufficient condition
for $ C $ to be dualizing.
\end{abstract}

\maketitle

\bibliographystyle{amsplain}
\section{introduction}

Throughout this paper, $R$ is a commutative Noetherian ring with non-zero identity. A finitely generated $R$-module $C$ is semidualizing if the natural homothety map $ R\longrightarrow \Hom_R(C,C) $ is an isomorphism and $ \Ext^i_R(C,C)=0 $ for all $ i>0 $. Foxby \cite{F}, Vasconcelos \cite{V}
and Golod \cite{G} independently initiated the study of semidualizing modules. The most important classes of modules over a Noetherian ring $ R $,
from a homological point of view, are the projective, flat, and injective modules. In \cite{E2}, E. Enochs introduced the notion of cotorsion modules and J.Xu \cite{X}, generalized this notion and defined two classes of $R$-modules, namely strongly cotorsion and strongly torsionfree $R$-modules. The notion of G-injective, G-projective and G-flat modules has been defined by E. Enochs et al. in \cite{EJ2}, \cite{EJT}. It is a
refinement of the classical injective, projective and flat modules and shares some of their nice properties. By using the semidualizing modules in \cite{HJ}, as a generalization of G-injective, G-projective and G-flat modules, H. Holm and P. Jørgensen introduced the notion of
 G$ _C $-injective, G$ _C $-projective and G$ _C $-flat modules. It is well-known that over a finite dimensional Gorenstein ring, then the strongly
 cotorsion (resp. strongly torsion free) modules are precisely the G-injective (resp. G-flat) modules. In Section 3, we generalize this fact to a Cohen-Macaulay ring with dualizing module. More precisely, in Corollary 3.4, we prove the following: \\
\textbf{Theorem A.} Let $ R $ be a ring of finite Krull dimension $ d $ and let $C$ be dulalizing.
\begin{itemize}
	\item[(i)]{One has $ \mathcal{GI}_C = \mathcal{M}_{\scot} $.}
	\item[(ii)] {One has $ \mathcal{GF}_C = \mathcal{M}_{\stf} $.}
\end{itemize}

 In \cite{EJ3} and \cite{EJ4}, E. Enochs and O. Jenda introduced the classes of (strongly)copure injective and (strongly)copure flat $R$-modules and defined two new homological dimensions related to these classes, namely copure injective and copure flat dimensions. They obtained a characterization for a finite dimensional Gorenstein ring in terms of finiteness of these homological dimensions. Also in \cite{DFZ}, N.Ding et al. defined the dual notion of (strongly)copure injective modules, namely (strongly)copure projective modules and defined the copure projective dimension. In this article we introduce and investigate various classes of $R$-modules that arise from the vanishing of $ \Ext $ and $ \Tor $ on the famous classes $ \mathcal{P}_C(R) $, $ \mathcal{F}_C(R) $ and $ \mathcal{I}_C(R) $. More precisely, in section 4, we define the classes:
\begin{itemize}
	\item[(i)]{$ \mathcal{M}^C_{\scot} = \{\ M \mid \Ext^{i > 0}_R(\mathcal{F}_C(R),M) = 0 \} $}
	\item[(ii)]{$ \mathcal{M}^C_{\stf} = \{\ M \mid \Tor_{i > 0}^R(\mathcal{F}_C(R),M) = 0 \} $}
	\item[(iii)]{$ \mathcal{M}^C_{\sci} = \{\ M \mid \Ext^{i > 0}_R(\mathcal{I}_C(R),M) = 0 \} $}
	\item[(iv)]{$ \mathcal{M}^C_{\scp} = \{\ M \mid \Ext^{i > 0}_R(M, \mathcal{F}_C(R)) = 0 \} $}
	\item[(v)]{$ \mathcal{M}^C_{\scf} = \{\ M \mid \Tor_{i > 0}^R(\mathcal{I}_C(R),M) = 0 \} $}
\end{itemize}
By our definition, it is clear that $ \mathcal{GP}_C \subseteq \mathcal{M}^C_{\scp} $, $ \mathcal{GI}_C \subseteq \mathcal{M}^C_{\sci} $ and $ \mathcal{GF}_C \subseteq \mathcal{M}^C_{\scf} $. We give several characterizations of the above classes and study the relations between them and between the classical ones. For instance, the following is Theorem 4.16.\\
\textbf{Theorem B.} Let $R$ be a Noetherian ring of finite Krull dimension $d$. The following statements hold true:
\begin{itemize}
	\item[(i)]{ Assume that $ M \in \mathcal{M}^C_{\scf} $. Then $ \mathcal{I}_C \otimes_R M \subseteq \mathcal{M}_{\scot}$.}
	\item[(ii)] {Assume that $ M \in \mathcal{M}^C_{\sci} $. Then $ \Hom_R(\mathcal{I}_C , M) \subseteq \mathcal{M}_{\stf}$.}	
\end{itemize}

One of the most influential results in commutative algebra is result of Auslander,
Buchsbaum and Serre: A local ring $ (R, \fm) $ is regular if and only if $ \pd_R(R / \fm) < \infty $. R. Takahashi and D. White, in \cite[Proposition 5.1]{TW}, generalized the mentioned theorem by replacing the condition $ \pd_R(R / \fm) < \infty $ with
$ \mathcal{P}_C $-$ \pd_R(R / \fm) < \infty $. The Auslander-Buchsbaum-Serre theorem, has several generalizations to complete intersection, Gorenstein and Cohen-Macaulay rings by using the finiteness of appropriate homological dimensions of $ R / \fm $. For example, the following result is first proved for a Cohen-Macaulay ring admitting a dualizing module in \cite[6.2.7]{C}, and for an arbitrary Notherian ring by S. Yassemi and L. Khatami \cite[Theorem 2.7]{KY}: A local ring $ (R, \fm) $ is Gorenstein if and only if G-$ \id_R(R / \fm) < \infty $.
In \cite[Theorem 4.1]{EJ3}, E. Enochs and O. Jenda, by using the strongly copure injective and strongly copure flat modules, proved that the following are equivalent for a non-negative integer $n$:
\begin{itemize}
	\item[(i)] {$R$ is $ n $-Gorenstein.}
	\item[(ii)] {$\cid_R(M) \leq n$ for any $R$-module $M$.}	
	\item[(iii)] {$\cfd_R(M) \leq n$ for any $R$-module $M$.}	
\end{itemize}
In section 4, we define three new homological dimensions, $ C $-copure projective dimension
$ C $-$ \scp $, $ C $-copure injective dimension
$ C $-$ \sci $, and $ C $-copure flat dimension
$ C $-$ \scf $. By these new homological dimensions, we generalize \cite[Theorem 4.1]{EJ3}.The following is a part of Theorem 4.27. \\
\textbf{Theorem C.} The following are equivalent:
	\begin{itemize}
		\item[(i)]{ $\id_R(C) \leq n$.}
		\item[(ii)] {$ C $-$\cid_R(M) \leq n$ for any $R$-module $M$.}
		\item[(iv)] {$ C $-$\cpd_R(M) \leq n$ for any $R$-module $M$.}
		\item[(vi)] {$ C $-$\cfd_R(M) \leq n$ for any $R$-module $M$.}
	\end{itemize}

\section{preliminaries}

In this section, we recall some definitions and facts which are needed throughout this
paper. By an injective cogenerator, we always mean an injective $R$-module $E$ for which $ \Hom_R(M,E) \neq 0 $ whenever $M$ is a nonzero $R$-module. For an $R$-module $M$, the injective hull of $M$, is always denoted by $E(M)$.

\begin{defn}
  \emph{Let $\mathcal{X} $ be a class of $R$-modules and $M$ an $R$-module. An $\mathcal{X}$-\textit{resolution} of $M$ is a complex of $R$-modules in $\mathcal{X} $ of the form \\
     \centerline{ $X = \ldots \longrightarrow X_n \overset{\partial_n^X} \longrightarrow X_{n-1} \longrightarrow \ldots \longrightarrow X_1 \overset{\partial_1^X}\longrightarrow X_0 \longrightarrow 0$}
such that $\H_0(X) \cong M$ and $\H_n(X) = 0$ for all $ n \geq 1$.}
\emph{Also the $ \mathcal{X}$-\textit{projective dimension} of $M$ is the quantity \\
\centerline{ $ \mathcal{X}$-$\pd_R(M) := \inf \{ \sup \{ n \geq 0 | X_n \neq 0 \} \mid X$ is an $\mathcal{X}$-resolution of $M \}$}.}
\emph{So that in particular $\mathcal{X}$-$\pd_R(0)= - \infty $. The modules of $\mathcal{X}$-projective dimension zero are precisely the non-zero modules in $\mathcal{X}$. The terms of $\mathcal{X}$-\textit{coresolution} and $\mathcal{X}$-$\id$ are defined dually.}
\end{defn}

\begin{defn}
 \emph{A finitely generated $ R $-module $ C $ is \textit{semidualizing} if it satisfies the following conditions:
\begin{itemize}
             \item[(i)]{The natural homothety map $ R\longrightarrow \Hom_R(C,C) $ is an isomorphism.}
             \item[(ii)]{$ \Ext^i_R(C,C)=0 $ for all $ i>0 $.}
          \end{itemize}}
\end{defn}
For example a finitely generated projective $R$-module of rank 1 is semidualizing. If $R$ is Cohen-Macaulay, then an $R$-module $D$ is dualizing if it is semidualizing and that $\id_R (D) < \infty $ . For example the canonical module of a Cohen-Macaulay local ring, if exists, is dualizing.

\begin{defn}
\emph{Following \cite{HJ}, let $C$ be a semidualizing $R$-module. We set
	\begin{itemize}
		\item[]{$\mathcal{P}_C(R) =$ the subcategory of $R$--modules  $C \otimes_R P$ where $P$ is a projective $R$--module.}
		\item[]{$\mathcal{F}_C(R) =$ the subcategory of $R$--modules  $C \otimes_R F$ where $F$ is a flat $R$--module.}
		\item[]{ $\mathcal{I}_C(R) =$ the subcategory of $R$--modules  $\Hom_R(C,I) $ where $I$ is an injective $R$--module.}
	\end{itemize}
The $R$-modules in $\mathcal{P}_C(R)$, $\mathcal{F}_C(R)$ and $\mathcal{I}_C(R)$ are called $C$-projective, $C$-flat and $C$-injective, respectively.  If $ C = R $, then it recovers the classes of projective modules $ \mathcal{P} $, flat modules $ \mathcal{F} $ and injective modules $  \mathcal{I}  $, respectively.
We use the notations $C$-$\pd$, $C$-$\fd$ and $C$-$\id$ instead of  $\mathcal{P}_C$-$\pd$, $\mathcal{F}_C$-$\pd$ and $\mathcal{I}_C$-$\id$, respectively.}
\end{defn}

Based on the work of E. Enochs and O. Jenda \cite{EJ1}, the following notions were
introduced and studied by H. Holm and P. Jørgensen \cite{HJ}.

\begin{defn}
	 \emph{A complete $\mathcal{P} \mathcal{P}_C$-\textit{resolution} is a complex $X$ of $R$-modules such that
	 	\begin{itemize}
	 		\item[(i)]{$X$ is exact and $\Hom_R(X,P)$ is exact for each $P \in \mathcal{P}_C(R)$, and that}
	 		\item[(ii)]{$X_i \in \mathcal{P}_C(R)$ for all $i < 0$ and $X_i$ is projective for all $i \geq 0$.}
	 	\end{itemize}
	 	An $R$-module $M$ is called G$_C$-\textit{projective} if there exists a complete $\mathcal{P} \mathcal{P}_C$-resolution $X$ such that
	 	$M \cong \coker (\partial_1^X)$. All projective $R$-modules and all $R$-modules in $\mathcal{P}_C(R)$ are G$_C$-projective. The class of G$_C$-projective $R$-modules is denoted by $\mathcal{GP}_C$. Also we use the notation G$_C$-$\pd$ instesd of $\mathcal{GP}_C$-$\pd$. }
	
 \emph{A complete $\mathcal{F} \mathcal{F}_C$-{resolution} is a complex $X$ of $R$-modules such that
\begin{itemize}
	\item[(i)]{$X$ is exact and $X \otimes_R I$ is exact for each $I \in \mathcal{I}_C(R)$, and that}
	\item[(ii)]{$X_i \in \mathcal{F}_C(R)$ for all $i < 0$ and $X_i$ is flat for all $i \geq 0$.}
\end{itemize}
An $R$-module $M$ is called G$_C$-\textit{flat} if there exists a complete $\mathcal{F} \mathcal{F}_C$-resolution $X$ such that
$M \cong \coker (\partial_1^X)$. All flat $R$-modules and all $R$-modules in $\mathcal{F}_C(R)$ are G$_C$-flat. The class of G$_C$-flat $R$-modules is denoted by $\mathcal{GF}_C$. Also we use the notation G$_C$-$\fd$ instesd of $\mathcal{GF}_C$-$\pd$. }

\emph{A complete $\mathcal{I}_C \mathcal{I}$-{coresolution} is a complex $Y$ of $R$-modules such that
\begin{itemize}
	\item[(i)]{$Y$ is exact and $\Hom_R(I,Y)$ is exact for each $I \in \mathcal{I}_C(R)$, and that}
	\item[(ii)]{$Y^i \in \mathcal{I}_C(R)$ for all $i \geq 0$ and $Y^i$ is injective for all $i < 0$.}
\end{itemize}
An $R$-module $M$ is called G$_C$-\textit{injective} if there exists a complete $\mathcal{I}_C \mathcal{I}$-coresolution $Y$ such that
$M \cong \coker (\partial_1^Y)$. All injective $R$-modules and all $R$-modules in $\mathcal{I}_C(R)$ are G$_C$-injective. The class of G$_C$-injective $R$-modules is denoted by $\mathcal{GI}_C$. Also we use the notation G$_C$-$\id$ instesd of $\mathcal{GI}_C$-$\id$. \\
Note that when $C = R$ these notions recover the concepts of Gorenstein projective module $ \mathcal{GP} $, Gorenstein flat module $ \mathcal{GF} $
and Gorenstein injective modules $ \mathcal{GI} $ which were introduced in \cite{EJT} and \cite{EJ2}.}
\end{defn}
\begin{lem}
	Let $C$ be a semidualizing $R$-module and let $M$ be an $R$-module.
	
	\begin{itemize}
		\item[(i)]{$C$-$\emph{\id}_R(M) = \emph{\id}_R (C \otimes_R M) $ and $\emph{\id}_R(M) =C$-$\emph{\id}_R(\emph{\Hom}_R(C,M))$}.
		\item[(ii)]{$C$-$\emph{\pd}_R(M) = \emph{\pd}_R (\emph{\Hom}_R(C,M))$ and $\emph{\pd}_R(M) =C$-$\emph{\pd}_R(C \otimes_R M) $}.
		\item[(iii)]{$C$-$\emph{\fd}_R(M) = \emph{\fd}_R (\emph{\Hom}_R(C,M))$ and $\emph{\fd}_R(M) =C$-$\emph{\fd}_R(C \otimes_R M) $}.
	\end{itemize}
\end{lem}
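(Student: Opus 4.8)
The plan is to deduce all six equalities from the Foxby (adjoint) equivalence attached to $C$. Recall that the Auslander class $\mathcal{A}_C$ and the Bass class $\mathcal{B}_C$ are defined by invertibility of the relevant (co)unit together with vanishing of the appropriate $\Ext$ and $\Tor$, and that $C\otimes_R-$ and $\Hom_R(C,-)$ restrict to quasi-inverse equivalences $C\otimes_R-\colon\mathcal{A}_C\rightleftarrows\mathcal{B}_C\colon\Hom_R(C,-)$. I will freely use the standard facts that every module of finite projective or flat dimension lies in $\mathcal{A}_C$ and every module of finite injective dimension lies in $\mathcal{B}_C$; that $\mathcal{P}_C(R),\mathcal{F}_C(R)\subseteq\mathcal{B}_C$ and $\mathcal{I}_C(R)\subseteq\mathcal{A}_C$; that $\Tor_{i>0}^R(C,A)=0$ for $A\in\mathcal{A}_C$ and $\Ext^{i>0}_R(C,B)=0$ for $B\in\mathcal{B}_C$; that $\Hom_R(C,C\otimes_R P)\cong P$ for projective (or flat) $P$ while $C\otimes_R\Hom_R(C,I)\cong I$ for injective $I$; and that $\mathcal{A}_C,\mathcal{B}_C$ satisfy the two-out-of-three property in short exact sequences. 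Finally, since $\Supp_R C=\Spec R$ and $C$ has rank one, one has $\Hom_R(C,X)=0\Rightarrow X=0$ and $C\otimes_R X=0\Rightarrow X=0$.

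Each equality splits into two inequalities, one of which is routine. For instance, in (ii), if $C$-$\pd_R(M)=n<\infty$ choose a $\mathcal{P}_C$-resolution $0\to C\otimes_R P_n\to\cdots\to C\otimes_R P_0\to M\to 0$; dévissage places $M$ and all its syzygies in $\mathcal{B}_C$, so applying $\Hom_R(C,-)$ stays exact (by $\Ext^{i>0}_R(C,-)$-vanishing on $\mathcal{B}_C$) and, using $\Hom_R(C,C\otimes_R P_i)\cong P_i$, yields a projective resolution of $\Hom_R(C,M)$ of length $n$, whence $\pd_R(\Hom_R(C,M))\le n$. Dually, if $\pd_R(M)=n<\infty$ then $M\in\mathcal{A}_C$, and applying $C\otimes_R-$ to a projective resolution (exact by $\Tor_{i>0}^R$-vanishing) produces a $\mathcal{P}_C$-resolution of $C\otimes_R M$. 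The same template handles (iii) with flats in place of projectives, and (i) in the dual direction, where one applies $C\otimes_R-$ to an $\mathcal{I}_C$-coresolution, or $\Hom_R(C,-)$ to an injective coresolution, using $C\otimes_R\Hom_R(C,I)\cong I$ and $\mathcal{I}_C(R)\subseteq\mathcal{A}_C$.

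The heart of the matter, and the step I expect to be the main obstacle, is the reverse inequality: finiteness of the dimension of the transformed module must be fed back to $M$ itself, i.e. the relevant counit or unit must be shown invertible \emph{without} assuming a priori that $M$ lies in a Foxby class. I plan to isolate this as a bootstrap lemma: if $\Hom_R(C,M)\in\mathcal{A}_C$ (which holds whenever $\pd_R$, $\fd_R$, or $C$-$\id_R$ of $\Hom_R(C,M)$ is finite), then the counit $\epsilon\colon C\otimes_R\Hom_R(C,M)\to M$ is an isomorphism and $M\in\mathcal{B}_C$. Writing $N=\Hom_R(C,M)$, the unit $\eta_N\colon N\to\Hom_R(C,C\otimes_R N)$ is invertible because $N\in\mathcal{A}_C$, so the triangle identity $\Hom_R(C,\epsilon)\circ\eta_N=\mathrm{id}_N$ forces $\Hom_R(C,\epsilon)$ to be an isomorphism; left-exactness of $\Hom_R(C,-)$ then gives $\Hom_R(C,\Ker\epsilon)=0$, hence $\Ker\epsilon=0$, while for $Q=\coker\epsilon$ the long exact sequence yields $\Hom_R(C,Q)\hookrightarrow\Ext^1_R(C,C\otimes_R N)=0$ (the last since $C\otimes_R N\in\mathcal{B}_C$), hence $Q=0$. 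Thus $M\cong C\otimes_R N\in\mathcal{B}_C$. The dual statement, that $C\otimes_R M\in\mathcal{B}_C$ forces the unit $M\to\Hom_R(C,C\otimes_R M)$ to be an isomorphism and $M\in\mathcal{A}_C$, is proved the same way after swapping the two functors, using $C\otimes_R X=0\Rightarrow X=0$ and $\Tor_{i>0}^R(C,\Hom_R(C,-))$-vanishing on $\mathcal{A}_C$. Granting this bootstrap, the reverse inequalities follow by transferring the resolution back along the now-invertible (co)unit, and the $\{\infty\}$ cases are automatic: once both inequalities hold, finiteness of either side forces equality.
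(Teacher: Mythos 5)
Your proposal is correct, but it is worth noting that the paper does not prove this lemma at all: its ``proof'' is a citation to \cite[Theorem 2.11]{TW} for parts (i)--(ii) and \cite[Proposition 5.2]{STWY} for part (iii). What you have written is essentially a self-contained reconstruction of the argument underlying those references, organized around the Foxby adjunction. The easy inequalities (transporting a finite $\mathcal{P}_C$-, $\mathcal{F}_C$- or $\mathcal{I}_C$-(co)resolution across $\Hom_R(C,-)$ or $C\otimes_R-$ using the two-out-of-three property of $\mathcal{A}_C$ and $\mathcal{B}_C$ and the vanishing of $\Ext^{i>0}_R(C,-)$ on $\mathcal{B}_C$, resp.\ $\Tor^R_{i>0}(C,-)$ on $\mathcal{A}_C$) are handled correctly, and you rightly identify the crux as the bootstrap: that finiteness of the dimension of the transformed module forces the (co)unit to be invertible without assuming $M$ lies in a Foxby class beforehand. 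Your argument for that step is sound --- the triangle identity makes $\Hom_R(C,\epsilon)$ (resp.\ $C\otimes_R\eta_M$) invertible, and faithfulness of $\Hom_R(C,-)$ and $C\otimes_R-$ (which follows from $\Supp_R C=\Spec R$, a consequence of $\Hom_R(C,C)\cong R$; the appeal to ``rank one'' is unnecessary and not quite the right justification over a general Noetherian ring) kills the kernel and cokernel, with $\Ext^1_R(C,C\otimes_R N)=0$ closing the loop. What your route buys is independence from the external references; what it costs is that you are re-proving, in compressed form, exactly the content of \cite[Theorem 2.11]{TW}, so if the surrounding paper is taken as given there is nothing to add beyond the citation.
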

\begin{proof}
	For (i) and (ii), see \cite[Theorem 2.11]{TW} and for (iii), see \cite[Proposition 5.2]{STWY}.
\end{proof}
\begin{lem}
Let $M$ be an $R$-module and let $E$ be an injective cogenerator. Then $M \in \mathcal{GF}_C$ if and only
 if $ \emph{Hom}_R(M,E) \in \mathcal{GI}_C$.	
\end{lem}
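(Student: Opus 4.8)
The plan is to transport complete resolutions across the functor $(-)^{\vee}:=\Hom_R(-,E)$. The two properties that drive everything are that $(-)^{\vee}$ is exact (as $E$ is injective) and that it is faithful and \emph{detects exactness of complexes} (as $E$ is an injective cogenerator, a complex $X$ is exact if and only if $X^{\vee}$ is exact), together with the adjunction isomorphism $\Hom_R\big(A,\Hom_R(B,E)\big)\cong \Hom_R(A\otimes_R B,E)$. First I would record the term-wise behaviour of $(-)^{\vee}$ on the relevant classes. If $F$ is flat then $F^{\vee}$ is injective, and by adjunction $(C\otimes_R F)^{\vee}\cong \Hom_R\big(C,\Hom_R(F,E)\big)=\Hom_R(C,F^{\vee})$, so $\mathcal{F}_C(R)^{\vee}\subseteq \mathcal{I}_C(R)$. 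Dually, using that $R$ is Noetherian and $C$ is finitely presented: if $I$ is injective then $I^{\vee}$ is flat (from $\Tor_i^R(N,I^{\vee})\cong \Ext^i_R(N,I)^{\vee}$ for finitely generated $N$), and the Hom-evaluation isomorphism $C\otimes_R I^{\vee}\xrightarrow{\ \cong\ }\Hom_R\big(\Hom_R(C,I),E\big)=\Hom_R(C,I)^{\vee}$ shows $\mathcal{I}_C(R)^{\vee}\subseteq \mathcal{F}_C(R)$.

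For the forward implication, suppose $M\in\mathcal{GF}_C$ and let $X$ be a complete $\mathcal{F}\mathcal{F}_C$-resolution with $M\cong\coker(\partial_1^X)$. I would set $Y:=X^{\vee}$ and verify it is a complete $\mathcal{I}_C\mathcal{I}$-coresolution. Exactness of $Y$ is immediate from exactness of $(-)^{\vee}$; the degree-wise conditions follow from the term-wise dictionary of the previous paragraph, after reindexing the homological complex as a cohomological one so that the flat terms dualize to the injective part and the $\mathcal{F}_C(R)$ terms to the $\mathcal{I}_C(R)$ part. The essential point is the orthogonality condition: for $I\in\mathcal{I}_C(R)$ the adjunction gives $\Hom_R(I,Y)=\Hom_R(I,X^{\vee})\cong (X\otimes_R I)^{\vee}$, and since $X\otimes_R I$ is exact by the definition of a complete $\mathcal{F}\mathcal{F}_C$-resolution, its dual is exact; hence $\Hom_R(I,Y)$ is exact. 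Finally $\coker(\partial_1^Y)\cong\Hom_R(\coker(\partial_1^X),E)=M^{\vee}$ because $(-)^{\vee}$ is exact, so $M^{\vee}\in\mathcal{GI}_C$.

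For the converse I would first prove the symmetric statement that $N\in\mathcal{GI}_C$ implies $N^{\vee}\in\mathcal{GF}_C$, by the mirror-image argument: dualize a complete $\mathcal{I}_C\mathcal{I}$-coresolution $W$ of $N$ to $Z:=W^{\vee}$, reading the term-wise dictionary in the opposite direction. The delicate point here is the surviving condition that $Z\otimes_R I$ be exact for $I=\Hom_R(C,J)\in\mathcal{I}_C(R)$; I would test it by $(-)^{\vee}$, reducing via adjunction to exactness of $\Hom_R(I,W^{\vee\vee})$ and comparing with the known-exact $\Hom_R(I,W)$, using the Bass-class isomorphism $C\otimes_R\Hom_R(C,J)\cong J$ and the fact that over a Noetherian ring a flat module tensored with an injective module is injective. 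Granting this, apply the symmetric statement to $N=M^{\vee}$ to obtain $M^{\vee\vee}\in\mathcal{GF}_C$. Since $E$ is a cogenerator, the biduality map $M\to M^{\vee\vee}$ is a pure monomorphism, so it remains to descend from $M^{\vee\vee}$ to $M$ through the closure of $\mathcal{GF}_C$ under pure submodules.

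I expect the converse to be the main obstacle, for two reasons. First, the orthogonality bookkeeping in the symmetric statement is not formal: unlike $\Hom_R(I,X^{\vee})\cong(X\otimes_R I)^{\vee}$, the expression $Z\otimes_R I$ cannot be rewritten as $\Hom_R(I,W)^{\vee}$ directly, because the modules $I\in\mathcal{I}_C(R)$ need not be finitely presented; this is exactly why the Bass-class and ``flat $\otimes$ injective is injective'' inputs, and with them the Noetherian hypothesis, are forced. Second, passing from $M^{\vee\vee}\in\mathcal{GF}_C$ back to $M$ relies on the closure of $\mathcal{GF}_C$ under pure submodules, a genuine (known) theorem over Noetherian rings rather than a formal consequence; the biduality map splits only after one further application of $(-)^{\vee}$, so it is the purity argument, and not a splitting, that must carry the final step.
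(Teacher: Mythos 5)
Your forward direction is correct and is a genuinely different (and more self-contained) argument than the paper's: dualizing a complete $\mathcal{F}\mathcal{F}_C$-resolution termwise and using the adjunction $\Hom_R(I,X^{\vee})\cong (X\otimes_R I)^{\vee}$ does produce a complete $\mathcal{I}_C\mathcal{I}$-coresolution of $M^{\vee}$. The paper instead proves \emph{both} directions in two lines by passing to the trivial extension $R\ltimes C$: by \cite[Theorem 2.16]{HJ}, $M\in\mathcal{GF}_C$ iff $M$ is G-flat over the Noetherian ring $R\ltimes C$, and the classical equivalence ``$M$ G-flat $\Leftrightarrow M^{\vee}$ G-injective'' over $R\ltimes C$ then transports back. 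This outsources exactly the part of the problem that your proposal leaves open.

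The gap is in your converse. Your reduction of the orthogonality condition for $Z=W^{\vee}$ to the exactness of $\Hom_R(I,W^{\vee\vee})$ is circular: you know $\Hom_R(I,W)$ is exact, but $W^{\vee\vee}$ is a different complex, and the biduality map $W\to W^{\vee\vee}$ is only a degreewise pure monomorphism, so exactness of $\Hom_R(I,-)$ does not pass from $W$ to $W^{\vee\vee}$ by any of the tools you name (the Bass-class isomorphism and ``flat $\otimes$ injective is injective'' identify the \emph{terms} of $Z\otimes_R I$, but say nothing about exactness of the complex). In the classical case this implication ($N$ G-injective $\Rightarrow N^{\vee}$ G-flat, or equivalently $M^{\vee}$ G-injective $\Rightarrow M$ G-flat) is precisely the hard half of Holm's theorem, whose proof does not dualize a coresolution at all: it extracts the Tor-vanishing $\Tor_i^R(\mathcal{I}_C,M)=0$ from $\Ext^i_R(\mathcal{I}_C,M^{\vee})=0$ and then builds the right half of the resolution from (pre)envelopes, using coherence. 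Your second input, closure of $\mathcal{GF}_C$ under pure submodules, is likewise asserted rather than proved; it is known for $\mathcal{GF}$ over Noetherian rings, but for $\mathcal{GF}_C$ you would have to establish it, most naturally again via the trivial extension --- at which point you may as well run the paper's argument. As written, the converse is not proved.
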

\begin{proof}
Let $ R \ltimes C $ denote the trivial extension of $R$ by $C$ and view M as an
$ R \ltimes C $-module via the natural surjection $ R \ltimes C \rightarrow R $. Now $M \in \mathcal{GF}_C$ if and only if $M$ is G-flat
 over $ R \ltimes C $ by \cite[Theorem 2.16]{HJ}. This is the case if and only if $ \Hom_R(M,E) $ is G-injective over $ R \ltimes C $ and using \cite[Theorem 2.16]{HJ} once more completes the proof.
\end{proof}	
	
\begin{defn}
\emph{Let $C$ be a semidualizing $R$-module. The \textit{Auslander class with
respect to} $C$ is the class $\mathcal{A}_C(R)$ of $R$-modules $M$ such that:
\begin{itemize}
	\item[(i)]{$\Tor_i^R(C,M) = 0 = \Ext^i_R(C, C \otimes_R M)$ for all $i \geq 1$, and}
	\item[(ii)]{The natural map $ M \rightarrow \Hom_R(C , C \otimes_R M )$ is an isomorphism.}
\end{itemize}		
The \textit{Bass class with
respect to} $C$ is the class $\mathcal{B}_C(R)$ of $R$-modules $M$ such that:
\begin{itemize}
	\item[(i)]{$\Ext^i_R(C,M) = 0 = \Tor_i^R(C, \Hom_R(C,M))$ for all $i \geq 1$, and}
	\item[(ii)]{The natural map $ C \otimes_R \Hom_R(C,M)) \rightarrow M $ is an isomorphism.}
\end{itemize}
The class $\mathcal{A}_C(R)$ contains all $R$-modules of finite projective dimension and those of finite $C$-injective dimension. Also the class $\mathcal{B}_C(R)$ contains all $R$-modules of finite injective dimension and those of finite $C$-projective dimension (see \cite[Corollary 2.9]{TW}). Also, if any two $ R $-modules in a short exact sequence are in $ \mathcal{A}_C(R) $ (resp. $ \mathcal{B}_C(R) $), then so is the third (see \cite{HW}).}
\end{defn}

\begin{defn}
\emph{ Let $ \mathcal{X} $ be a class of $R$-modules. Following \cite{X}, we say that $ \mathcal{X} $ is \textit{closed under extension} whenever
 $ 0 \rightarrow X' \rightarrow X \rightarrow X'' \rightarrow 0 $ is any exact sequence, with $ X' , X'' \in \mathcal{X} $, then $ X \in \mathcal{X} $. Also the class $ \mathcal{X} $ is said to be \textit{resolving} (resp. \textit{coresolving}), provided that $ \mathcal{X} $ is closed under extensions, $  \mathcal{P} \subseteq \mathcal{X} $ (resp. $  \mathcal{I} \subseteq \mathcal{X} $) and
 $ X' \in  \mathcal{X}  $ (resp. $ X'' \in  \mathcal{X}  $) whenever $ 0 \rightarrow X' \rightarrow X \rightarrow X'' \rightarrow 0 $ a short exact sequence such that $ X , X'' \in \mathcal{X} $ (resp $ X , X' \in \mathcal{X} $).	}
\end{defn}	
\begin{defn}
\emph{Let $M$ be an $R$-module and let $ \mathcal{X} $ be a class of $R$-modules . Following \cite{EJ1}, a $ \mathcal{X} $-\textit{precover} of $M$ is a homomorphism
$ \varphi : X \rightarrow M$, with $X \in \mathcal{X}$, such that every homomorphism $Y \rightarrow M$ with $Y \in \mathcal{X}$, factors through $\phi$;
i.e., the homomorphism \\
\centerline{$\Hom_R(Y, \varphi): \Hom_R(Y,X) \rightarrow \Hom_R(Y,M)$}
is surjective for each module $Y$ in $ \mathcal{X} $. A $ \mathcal{X} $-precover $ \varphi : X \rightarrow M$ is a $ \mathcal{X} $-\textit{cover} if every $ \psi \in \Hom_R(X,X)$ with $\varphi \psi = \varphi$ is an automorphism. \\
Dually, a $ \mathcal{X} $-\textit{preenvelope} of $M$ is a homomorphism
$ \varphi : M \rightarrow X$, with $X \in \mathcal{X}$, such that every homomorphism $M \rightarrow Y$ with $Y \in \mathcal{X}$, factors through $\phi$; i.e., the homomorphism \\
\centerline{$\Hom_R(\varphi , Y ): \Hom_R(X,Y) \rightarrow \Hom_R(M,Y)$}
is surjective for each module $Y$ in $ \mathcal{X} $. A $ \mathcal{X} $-preenvelpoe $ \varphi : M \rightarrow X$
is a $ \mathcal{X} $-\textit{envelope} if every $ \psi \in \Hom_R(X,X)$ with $ \psi \varphi = \varphi$ is an automorphism. \\
We say that a class $ \mathcal{X} $ is \textit{$ ( $pre$ ) $covering} in the category of $R$-modules, precisely when any $R$-module has a $ \mathcal{X} $-(pre)cover. Dually, a class $ \mathcal{X} $ is \textit{$ ( $pre$ ) $enveloping }in the category of $R$-modules, precisely when any $R$-module has a $ \mathcal{X} $-(pre)envelope.}	
\end{defn}

\begin{lem}\label{A2}
Let $C$ be a semidualizing $R$-module. We have the following statements:
\begin{itemize}
	\item[(i)]{The class $ \mathcal{I}_C(R)$ is covering and enveloping on the category of $R$-modules and the class $ \mathcal{P}_C(R)$ is precovering. Also, the class $ \mathcal{F}_C(R)$ is covering and preenveloping on the category of $R$-modules.}
	\item[(ii)]{Assume that $M$ is an $R$-module. If $M \in \mathcal{A}_C(R)$ $ ( $resp. $M \in \mathcal{B}_C(R)$ $ ) $, then any $ \mathcal{I}_C$-coresolution $ ( $resp. $\mathcal{P}_C$-resolution$ ) $ of $M$ is exact and $ (C \otimes_R -) \emph{-}$exact $ ( $resp. $ \emph{\Hom}_R(C,-) \emph{-}$exact$ ) $. In particular, any $ \mathcal{I}_C$-preenvelope $ ( $resp. $\mathcal{P}_C$-precover$ ) $ of $M$ is injective $ ( $resp. surjective$ ) $ .}
\end{itemize}
\end{lem}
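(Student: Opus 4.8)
The plan is to treat (i) by transporting the classical existence theorems along the tensor--hom adjunction $\Hom_R(C\otimes_R Q,N)\cong\Hom_R(Q,\Hom_R(C,N))$ and its dual, and to treat (ii) through Foxby equivalence. For the precovering of $\mathcal{P}_C(R)$ I would take any surjection $P\twoheadrightarrow\Hom_R(C,M)$ from a projective $P$, apply $C\otimes_R-$, and compose with the evaluation $C\otimes_R\Hom_R(C,M)\to M$; the adjunction then shows the resulting $C\otimes_R P\to M$ is a $\mathcal{P}_C$-precover of the arbitrary module $M$. For $\mathcal{I}_C(R)$ I would start from the facts that injective modules are enveloping and, over a Noetherian ring, covering (see \cite{X}), and push these through $\Hom_R(C,-)$; symmetrically, for $\mathcal{F}_C(R)$ I would use that flat modules are covering and preenveloping and push these through $C\otimes_R-$. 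The delicate point in (i) is not the precover/preenvelope property (which the adjunction yields for every $M$) but the minimality that upgrades these to covers and envelopes; this I would cite rather than reprove.

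For (ii) suppose $M\in\mathcal{A}_C(R)$, so $\Tor_i^R(C,M)=0$ for $i\geq1$ and $M\cong\Hom_R(C,C\otimes_R M)$. First I would exhibit one distinguished injective $\mathcal{I}_C$-preenvelope: embed $C\otimes_R M$ into its injective hull $E$, apply the left exact functor $\Hom_R(C,-)$, and use the isomorphism above to obtain a monomorphism $\iota\colon M\hookrightarrow\Hom_R(C,E)\in\mathcal{I}_C(R)$, which the adjunction of (i) identifies as an $\mathcal{I}_C$-preenvelope. Then for an arbitrary $\mathcal{I}_C$-preenvelope $\varphi\colon M\to I'$ the preenvelope property factors $\iota$ as $\iota=h\varphi$, whence $\Ker\varphi\subseteq\Ker\iota=0$; this is the ``in particular'' clause.

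To pass from a single preenvelope to a whole $\mathcal{I}_C$-coresolution $0\to M\to I^0\to I^1\to\cdots$, I would argue cosyzygy by cosyzygy. Setting $M^0=M$ and $M^{j+1}=\coker(M^j\to I^j)$, the previous paragraph makes each $M^j\to I^j$ injective; since $\mathcal{I}_C(R)\subseteq\mathcal{A}_C(R)$ and any two terms of a short exact sequence in $\mathcal{A}_C(R)$ force the third into $\mathcal{A}_C(R)$ (see \cite{HW}), induction gives $M^j\in\mathcal{A}_C(R)$ for all $j$ and hence exactness of each $0\to M^j\to I^j\to M^{j+1}\to0$; splicing these yields exactness of the coresolution. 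Applying $C\otimes_R-$ keeps each short exact sequence exact because $\Tor_1^R(C,M^{j+1})=0$, while $C\otimes_R I^j\cong C\otimes_R\Hom_R(C,E^j)\cong E^j$ is injective (injectives lie in $\mathcal{B}_C(R)$); thus $C\otimes_R-$ carries the coresolution to an honest injective resolution of $C\otimes_R M$, and in particular preserves exactness. The assertions for $M\in\mathcal{B}_C(R)$, $\mathcal{P}_C$-precovers and $\Hom_R(C,-)$ follow by the dual argument, using $\mathcal{P}_C(R)\subseteq\mathcal{B}_C(R)$, the vanishing $\Ext^i_R(C,-)=0$ on $\mathcal{B}_C(R)$, and $\Hom_R(C,C\otimes_R Q)\cong Q$ for projective $Q$.

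I expect the only genuinely heavy input to be the classical covering theorems invoked in (i)---the existence of injective and of flat covers over a Noetherian ring---together with the transport of minimality along the equivalence; these I would import from the literature. Granting (i), part (ii) is bookkeeping: the two-out-of-three closure of $\mathcal{A}_C(R)$ propagates membership along the cosyzygies, and the defining $\Tor$-vanishing of the Auslander class is precisely what lets $C\otimes_R-$ commute past the entire coresolution.
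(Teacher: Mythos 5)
Your proposal is correct in substance, but it is worth noting that the paper does not actually argue this lemma at all: its entire proof is the citation ``for (i), see [HW, Proposition 5.3] and for (ii), see [TW, Corollary 2.4]''. What you have written is essentially a reconstruction of the arguments behind those two citations. For (i), your adjunction transport $\Hom_R(C\otimes_R Q,N)\cong\Hom_R(Q,\Hom_R(C,N))$ together with $\Hom_R(C,C\otimes_R P)\cong P$ for projective $P$ is exactly how the precover/preenvelope existence is obtained in Holm--White, and you are right that the genuinely heavy inputs (existence of flat covers, and the minimality upgrade from precover to cover) must be imported rather than reproved; the paper simply hides all of this behind the citation. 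For (ii), your argument --- exhibit the distinguished monic preenvelope $M\hookrightarrow\Hom_R(C,E(C\otimes_R M))$, deduce that every preenvelope is monic by factoring, propagate $\mathcal{A}_C(R)$-membership to cosyzygies via the two-of-three property, and use $\Tor_1^R(C,-)=0$ to push $C\otimes_R-$ past each short exact sequence --- is precisely the content of Takahashi--White's Corollary 2.4. One small caution: with the paper's Definition 2.1 a coresolution is exact by fiat, so the nontrivial content of ``any $\mathcal{I}_C$-coresolution is exact'' concerns the \emph{proper} (preenvelope-built) coresolution, which is implicitly what your cosyzygy-by-cosyzygy induction treats; you should say so explicitly, since your induction uses that each map $M^j\to I^j$ is a preenvelope. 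With that clarification, your write-up buys the reader a self-contained proof where the paper offers only pointers.
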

\begin{proof}
	For (i), see \cite[Proposition 5.3]{HW} and for (ii), see \cite[Corollary 2.4]{TW}.
\end{proof}

\begin{defn}
\emph{Let $ \mathcal{X} $ be class of $R$-modules. Set $ \mathcal{X}^{\perp} = \{\ N \mid \Ext^1_R(\mathcal{X} , N) = 0 \} $ and
$ ^{\perp}\mathcal{X} = \{\ M \mid \Ext^1_R(M , \mathcal{X}) = 0\}$. Following \cite{EJ1}, the pair $ (\mathcal{X} , \mathcal{Y}) $ is called a \textit{cotorsion theory}, precisely when $ \mathcal{X} = ^{\perp}\mathcal{Y} $ and $ \mathcal{X}^{\perp} = \mathcal{Y}$. Also a cotorsion theory $ (\mathcal{X} , \mathcal{Y}) $ is said to be \textit{hereditary}, precisely when $ \Ext^i_R(\mathcal{X} , \mathcal{Y}) = 0 $ for all $ i \geq 1 $.}
\end{defn}
	
\begin{defn}
\emph{Following \cite{E2}, an $R$-module $M$ is called  \textit{cotorsion} if $ \Ext^1_R(F,M) = 0 $ for any flat $R$-module F. Following \cite{X}, an $R$-module $M$ is
	 called  \textit{strongly cotorsion }
if $ \Ext^1_R(F,M) = 0 $ for any $R$-module $F$ with finite flat dimension. An $R$-module N is called  \textit{strongly torsionfree}  if
$\Tor^R_1(N,X) = 0$ for any $R$-module X with finite flat dimension. Following \cite{EJ3}, an $R$-module $X$ is said to be \textit{strongly copure injective}
if $ \Ext^1_R(E,X) = 0 $ for any $R$-module $E$ with finite injective dimension. An $R$-module $Y$ is said to be \textit{strongly copure  flat}
if $ \Tor^R_1(E,Y) = 0 $ for any $R$-module $E$ with finite injective dimension. Following \cite{DFZ}, an $R$-module $Z$ is said to be \textit{strongly copure projective}
if $ \Ext^1_R(Z,F) = 0 $ for any flat $R$-module $F$.}	
\end{defn}
Note that if $M$ is strongly cotorsion (resp. strongly torsionfree) then $ \Ext^i_R(F,M) = 0 $ (resp. $ \Tor^R_i(F,M) = 0 $) for any $R$-module $F$ with finite flat dimension. Also note that any G-injective (resp. G-flat) $R$-module is strongly copure injective (resp. strongly copure flat).
\begin{rem}
\emph{In view of Definition 2.12, for any $R$-module $M$, we can define two homological dimensions, namely strongly cotorsion dimension $ \scd_R(M) $, and strongly torsionfree dimension $ \std_R(M) $. Then it is not hard to see that there are equalities $ \scd_R(M) = \sup \{ n \geq 0 | \Ext^n_R(F , M) \neq 0 $, where $F$ is an $R$-module with $\fd_R(F) < \infty $ $\}$, $ \std_R(M) = \sup \{ n \geq 0 | \Tor^R_n(F , M) \neq 0 $, where $F$ is an $R$-module with $\fd_R(E) < \infty $ $\}$. In \cite{DFZ}, E. Enochs and O. Jenda, and in \cite{EJ3}, N. Ding et al. defined three new homological dimensions for modules, namely copure injective dimension denoted by $\cid$, copure flat dimension denoted by $\cfd$ and  copure projective dimension denoted by $\cpd$. It is not hard to see that for an $R$-module $M$, there are equalities $ \cid_R(M) = \sup \{ n \geq 0 | \Ext^n_R(E , M) \neq 0 $, where $E$ is an $R$-module with $\id_R(E) < \infty $ $\}$, $ \cfd_R(M) = \sup \{ n \geq 0 | \Tor^R_n(E , M) \neq 0 $, where $E$ is an $R$-module with $\id_R(E) < \infty $ $\}$ and $ \cpd_R(M) = \sup \{ n \geq 0 | \Ext^n_R(M , F) \neq 0 $, where $F$ is an $R$-module with $\fd_R(F) < \infty $ $\}$.}
\end{rem}
\begin{no}
\emph{We use the notations $ \mathcal{M}_{\Cot} $, $ \mathcal{M}_{\scot} $, $ \mathcal{M}_{\stf} $, $ \mathcal{M}_{\ci} $, $ \mathcal{M}_{\sci} $, $ \mathcal{M}_{\scf} $ and $ \mathcal{M}_{\scp} $ to denote the full subcategories of cotorsion, strongly cotorsion, strongly torsionfree, copure injective, strongly copure injective, strongly copure flat and strongly copure projective $R$-modules, respectively. It is clear that $ \mathcal{GP} \subseteq \mathcal{M}_{\scp} $, $ \mathcal{GF} \subseteq \mathcal{M}_{\scf} $	and that $ \mathcal{GI} \subseteq \mathcal{M}_{\sci} $.}
\end{no}
\begin{lem}\label{A2}
Let $M$ be an $R$-module and let $E$ be an injective cogenerator. The following statements hold:
\begin{itemize}
	\item[(i)]{One has $ M \in \mathcal{M}_{\emph{\stf}} $ if and only if $ \emph{Hom}_R(M,E) \in \mathcal{M}_{\emph{\scot}} $.}
	\item[(ii)]{One has $ M \in \mathcal{M}_{\emph{\scf}} $ if and only if $ \emph{Hom}_R(M,E) \in \mathcal{M}_{\emph{\sci}} $.}
\end{itemize}
\end{lem}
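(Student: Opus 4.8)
The plan is to reduce both biconditionals to a single natural isomorphism, namely
\[
\Ext^i_R\bigl(N,\Hom_R(M,E)\bigr)\;\cong\;\Hom_R\bigl(\Tor_i^R(N,M),E\bigr)
\]
valid for every $R$-module $N$ and every $i\geq 1$ because $E$ is injective. First I would establish this formula: choosing a projective resolution $P_\bullet$ of $N$ we have $\Tor_i^R(N,M)=\H_i(P_\bullet\otimes_R M)$; since $E$ is injective the functor $\Hom_R(-,E)$ is exact and hence commutes with homology, and the Hom-tensor adjunction $\Hom_R(P_\bullet\otimes_R M,E)\cong\Hom_R\bigl(P_\bullet,\Hom_R(M,E)\bigr)$ identifies the resulting cohomology with $\Ext^i_R\bigl(N,\Hom_R(M,E)\bigr)$. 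This is the only computation in the argument and it is entirely standard.

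For part (i), by Definition 2.12 and the observation following it, $M\in\mathcal{M}_{\stf}$ means exactly that $\Tor_i^R(F,M)=0$ for all $i\geq 1$ and all $R$-modules $F$ of finite flat dimension, while $\Hom_R(M,E)\in\mathcal{M}_{\scot}$ means exactly that $\Ext^i_R\bigl(F,\Hom_R(M,E)\bigr)=0$ for all such $i$ and $F$. Specializing the isomorphism to $N=F$ gives $\Ext^i_R\bigl(F,\Hom_R(M,E)\bigr)\cong\Hom_R\bigl(\Tor_i^R(F,M),E\bigr)$. If $M$ is strongly torsionfree the right-hand side vanishes, which yields one implication. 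For the converse I would use that $E$ is an injective \emph{cogenerator}, so that $\Hom_R(T,E)=0$ forces $T=0$; hence the vanishing of $\Ext^i_R\bigl(F,\Hom_R(M,E)\bigr)$ forces $\Tor_i^R(F,M)=0$. Since the class of test modules $F$ is the same (finite flat dimension) on both sides, the equivalence follows test-module by test-module.

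For part (ii) the argument is verbatim the same, the only change being that the test modules are now the $R$-modules $E'$ of finite injective dimension. Thus $M\in\mathcal{M}_{\scf}$ means $\Tor_i^R(E',M)=0$ for all $i\geq 1$ and all such $E'$, and $\Hom_R(M,E)\in\mathcal{M}_{\sci}$ means $\Ext^i_R\bigl(E',\Hom_R(M,E)\bigr)=0$ for all such $i$ and $E'$. Applying the isomorphism with $N=E'$ and invoking the cogenerator property exactly as above gives the equivalence.

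The only point requiring a little care, and hence the main (mild) obstacle, is to confirm that the class of test modules is genuinely identical in the ``$\Tor$'' and ``$\Ext$'' descriptions, so that the biconditional can be checked one test module at a time; this is immediate from the definitions, since in (i) both classes consist of modules of finite flat dimension and in (ii) both consist of modules of finite injective dimension. No hypotheses beyond those in the statement are needed.
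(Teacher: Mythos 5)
Your proof is correct and follows essentially the same route as the paper: the paper's argument rests on the single isomorphism $\Hom_R(\Tor^R_i(F,M),E)\cong\Ext^i_R(F,\Hom_R(M,E))$ (cited there from Enochs--Jenda, Theorem 3.2.1), which you simply derive by hand from a projective resolution, the exactness of $\Hom_R(-,E)$, and adjunction. Your explicit appeal to the cogenerator property for the converse direction is a point the paper leaves implicit, but the substance of the argument is identical.
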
	
\begin{proof}
(i). Assume that $F$ is an $R$-module of finite flat dimension. By \cite[theorem 3.2.1]{EJ1}, there is an isomorphism \\
\centerline{$ \Hom_R(\Tor^R_i(F,M), E) \cong \Ext^i_R(F, \Hom_R(M,E))$,}
For all $ i \geq 1 $. Hence $ \Tor^R_1(F,M) = 0 $ if and only if $\Ext^1_R(F, \Hom_R(M,E)) = 0$. The proof of (ii) is similar.
\end{proof}		

\section{G$ _C $-injective, G$ _C $-flat and Dualizing Modules}
Throughout $C$ is a semidualizing $R$-module. Our aim in this section is to show that if $C$ is dualizing, then there are equalities of subcategories $ \mathcal{GI}_C = \mathcal{M}_{\scot} $ and $ \mathcal{GF}_C = \mathcal{M}_{\stf} $. We begin with a proposition which says that the containment '$ \subseteq $' does not need the dualizing assumption on $C$.
\begin{prop}\label{A2}
Let $R$ be a ring of finite Krull dimension $d$. Then we have the following statements:
\begin{itemize}
	\item[(i)]{One has $ \mathcal{GI}_C \subseteq \mathcal{M}_{\emph{\scot}} $.}
	\item[(ii)]{One has $ \mathcal{GF}_C \subseteq \mathcal{M}_{\emph{\stf}} $.}
\end{itemize}
\end{prop}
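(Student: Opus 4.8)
The plan is to prove (i) directly and then deduce (ii) from it by duality. Once (i) is in hand, the second statement is essentially free: if $M \in \mathcal{GF}_C$, then $\Hom_R(M,E) \in \mathcal{GI}_C$ by Lemma 2.6, hence $\Hom_R(M,E) \in \mathcal{M}_{\scot}$ by part (i), and therefore $M \in \mathcal{M}_{\stf}$ by Lemma 2.15(i). So the entire weight of the argument rests on (i). There the engine will be a dimension-shifting computation run along the injective side of a complete $\mathcal{I}_C\mathcal{I}$-coresolution of $M$, terminated by a finiteness supplied by the hypothesis $\dim R = d < \infty$.

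First I would record the base case $\mathcal{I}_C(R) \subseteq \mathcal{M}_{\scot}$. Let $F$ be any module with $\fd_R(F) < \infty$. Since $\dim R = d < \infty$, the Gruson--Raynaud/Jensen theorem (every flat module over a Noetherian ring of finite Krull dimension has finite projective dimension) gives $\pd_R(F) < \infty$, so $F \in \mathcal{A}_C(R)$ by Definition 2.7, whence $\Tor_i^R(F,C) = 0$ for all $i \geq 1$. Feeding this into the natural isomorphism $\Ext^i_R(F,\Hom_R(C,E)) \cong \Hom_R(\Tor_i^R(F,C),E)$, valid for any injective $E$, yields $\Ext^{\geq 1}_R(F,\Hom_R(C,E)) = 0$; that is, every $C$-injective module is strongly cotorsion. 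Injective modules are trivially strongly cotorsion, and by the remark following Definition 2.12 strong cotorsion already forces $\Ext^{\geq 1}_R(F,-) = 0$ on such modules.

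The crux is then the dimension shift. Let $M \in \mathcal{GI}_C$ and fix $F$ with $\fd_R(F) < \infty$; as above $\pd_R(F) < \infty$, so $\Ext^j_R(F,-) = 0$ for all $j > \pd_R(F)$. From the complete $\mathcal{I}_C\mathcal{I}$-coresolution of $M$, whose terms in negative degrees are injective and in nonnegative degrees lie in $\mathcal{I}_C(R)$, I extract an exact left resolution $\cdots \to W_1 \to W_0 \to M \to 0$ in which every $W_j$ lies in $\mathcal{I}_C(R) \cup \{\text{injective modules}\}$ and is therefore strongly cotorsion. Writing $\Omega^{(k)}$ for the $k$-th syzygy of this resolution and applying $\Ext_R(F,-)$ to the syzygy short exact sequences, the vanishing $\Ext^{\geq 1}_R(F,W_j) = 0$ collapses the long exact sequences into isomorphisms $\Ext^1_R(F,M) \cong \Ext^{1+k}_R(F,\Omega^{(k)})$ for every $k \geq 0$. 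Choosing $k$ with $1 + k > \pd_R(F)$ makes the right-hand side vanish, so $\Ext^1_R(F,M) = 0$; since $F$ was an arbitrary module of finite flat dimension, $M \in \mathcal{M}_{\scot}$, which is (i).

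The main obstacle is pinning down precisely where finite Krull dimension is indispensable: the dimension shift pushes $\Ext$ into ever higher degrees, and with no a priori bound the process never terminates. The resolution is that $\dim R < \infty$ upgrades finite flat dimension to finite projective dimension, delivering exactly the vanishing $\Ext^{j}_R(F,-) = 0$ needed to kill the shifted groups (and, incidentally, placing $F$ in $\mathcal{A}_C(R)$ for the base case). A secondary point to check carefully is that the left resolution drawn from the complete coresolution is genuinely infinite with all terms in $\mathcal{I}_C(R) \cup \{\text{injectives}\}$, so that the shift may be iterated arbitrarily far irrespective of the exact placement of $M$ within the coresolution.
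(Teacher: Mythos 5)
Your proposal is correct and follows essentially the same route as the paper: first $\mathcal{I}_C(R)\subseteq\mathcal{M}_{\scot}$ via the isomorphism $\Ext^i_R(F,\Hom_R(C,I))\cong\Hom_R(\Tor^R_i(C,F),I)$ and $F\in\mathcal{A}_C(R)$, then dimension shifting along the left $\mathcal{I}_C$-resolution coming from the complete $\mathcal{I}_C\mathcal{I}$-coresolution, terminated by $\pd_R(F)\le d$ (Foxby/Gruson--Raynaud--Jensen), with (ii) deduced from (i) by the character-module duality of Lemmas 2.6 and 2.15. The only cosmetic difference is that the paper packages the syzygy sequences via \cite[Lemma 2.4]{HJ} (so all terms lie in $\mathcal{I}_C(R)$, not merely in $\mathcal{I}_C(R)$ together with the injectives), which changes nothing in the argument.
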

\begin{proof}
(i). First we show that $ \mathcal{I}_C \subseteq \mathcal{M}_{\scot} $. Let $F$ be an $R$-module of finite flat dimension. By \cite[theorem 3.2.1]{EJ1}, the isomorphism \\
\centerline{ $\Ext^1_R(F , \Hom_R(C,I)) \cong \Hom_R(\Tor^R_1(C,F) , I)$,}
holds for any injective $R$-module $I$. Now since $F \in \mathcal{A}_C(R)$ we have $\Tor^R_i(C,F) = 0$ for all $i \geq 1$. In particular,
we have $\Tor^R_1(C,F) = 0$, which implies that $\Ext^1_R(F , \Hom_R(C,I)) = 0$, as wanted. Now let $M \in \mathcal{GI}_C$. In view of \cite[Lemma 2.4]{HJ}, we have exact sequences \\
\centerline{ $0 \rightarrow M_{i+1} \rightarrow E_i \rightarrow M_i \rightarrow 0 $,}
in which $M_i \in \mathcal{GI}_C$ and $E_i \in \mathcal{I}_C$ for all $i \geq 0$ such that $M_0 = M$. Application of the
functor $\Hom_R(F,-)$ on these exact sequences, yields the isomorphisms \\
\centerline{ $\Ext^1_R(F , M_0) \cong \Ext^2_R(F , M_1) \cong \cdots \cong \Ext^{d + 1}_R(F , M_d) $.}
According to \cite[Corollary 3.4]{F1}, we have $\pd_R (F) \leq d$. Thus, using the above isomorphisms, we have $\Ext^1_R(F , M_0) \cong \Ext^{d + 1}_R(F , M_{d}) = 0 $, which completes the proof.

(ii). Let $N \in \mathcal{GF}_C$ and let $E$ be an injective cogenerator. Then by Lemma 2.6 $\Hom_R(N,E) \in \mathcal{GI}_C$ and hence $ \Hom_R(N,E) \in \mathcal{M}_{\scot} $ by (i). Therefore $N \in \mathcal{M}_{\stf}$ by Lemma 2.15.
\end{proof}

\begin{lem}\label{A2}
Let R be a ring of finite Krull dimension d and let $C$ be dulalizing.
 \begin{itemize}
	\item[(i)]{An $R$-module $M$ is \emph{G}$_C$\emph{-}injective if and only if there exists an exact
sequence \\
$ \cdots \rightarrow \emph{\Hom}_R(C,I_n) \rightarrow \cdots \rightarrow \emph{\Hom}_R(C,I_1) \rightarrow \emph{\Hom}_R(C,I_0) \rightarrow M \rightarrow 0$, \\
in which $I_i$ is injective for all $i \geq 0$.}
	\item[(ii)]{An $R$-module $N$ is \emph{G}$_C$\emph{-}flat if and only if there exists an exact
sequence \\
\centerline{$ 0 \rightarrow N \rightarrow C \otimes_R F^0 \rightarrow C \otimes_R F^1 \rightarrow \cdots \rightarrow C \otimes_R F^n \rightarrow \cdots $,}
 in which $F^i$ is flat for all $i \geq 0$.}
\end{itemize}
\end{lem}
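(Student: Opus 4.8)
The plan is to prove both biconditionals by reading the ``only if'' directions straight off the definitions and by reducing the substantial ``if'' directions to the classical Gorenstein situation. For the forward implication in (i), if $M$ is G$_C$-injective then by definition it is the relevant cokernel in a complete $\mathcal{I}_C\mathcal{I}$-coresolution $Y$; the part of $Y$ assembled from the terms $Y^i \in \mathcal{I}_C(R)$ is, after writing $Y^i = \Hom_R(C,I_i)$ with $I_i$ injective, exactly an exact sequence $\cdots \to \Hom_R(C,I_1) \to \Hom_R(C,I_0) \to M \to 0$. Likewise the forward implication in (ii) is obtained by reading off the $\mathcal{F}_C(R)$-part $0 \to N \to C\otimes_R F^0 \to C\otimes_R F^1 \to \cdots$ of a complete $\mathcal{F}\mathcal{F}_C$-resolution of $N$. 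So the content lies entirely in the converses.

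For the converse of (i), the key structural input is that, since $C$ is dualizing, the trivial extension $S = R \ltimes C$ is Iwanaga--Gorenstein with $\dim S = d$ and $\id_S S \le d$, and by \cite{HJ} an $R$-module is G$_C$-injective over $R$ if and only if it is Gorenstein injective over $S$, the correspondence carrying $\mathcal{I}_C(R)$ onto the injective $S$-modules appearing in complete resolutions. Granting this, I would transport the given sequence $\cdots \to \Hom_R(C,I_1) \to \Hom_R(C,I_0) \to M \to 0$ to an exact sequence $\cdots \to E_1 \to E_0 \to M \to 0$ of $S$-modules with each $E_j$ injective over $S$, and then use the classical theory of $n$-Gorenstein rings (see \cite{EJ1}): over $S$ every injective module has $\pd_S \le d$, so $\Ext^i_S(E,-)=0$ for $i>d$ and every injective $E$. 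A standard dimension-shift along the (infinite, exact) left half then forces $\Ext^{\ge 1}_S(E,W)=0$ for every syzygy $W$, which is precisely what is needed to splice on an injective coresolution $0 \to M \to E^0 \to E^1 \to \cdots$ on the right and to verify that the resulting doubly infinite complex stays exact after applying $\Hom_S(E,-)$ for all injective $E$. This exhibits a complete injective resolution of $M$ over $S$, i.e. $M$ is Gorenstein injective over $S$, hence G$_C$-injective over $R$.

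The converse of (ii) I would deduce from (i) by a Matlis-type duality, exactly as Proposition 3.1(ii) was reduced to 3.1(i). Let $E$ be an injective cogenerator and apply the exact functor $\Hom_R(-,E)$ to the given sequence $0 \to N \to C\otimes_R F^0 \to C\otimes_R F^1 \to \cdots$. Using the adjunction isomorphism $\Hom_R(C\otimes_R F^i, E) \cong \Hom_R(C, \Hom_R(F^i,E))$ together with the fact that $\Hom_R(F^i,E)$ is injective whenever $F^i$ is flat and $E$ injective, this produces an exact sequence $\cdots \to \Hom_R(C,J^1) \to \Hom_R(C,J^0) \to \Hom_R(N,E) \to 0$ with each $J^i = \Hom_R(F^i,E)$ injective. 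By the converse of (i), $\Hom_R(N,E)$ is G$_C$-injective, whence Lemma 2.6 gives that $N$ is G$_C$-flat.

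The hard part will be the transport step in the converse of (i): making precise that an exact $\mathcal{I}_C(R)$-resolution over $R$ corresponds, under the equivalence of \cite{HJ}, to an exact resolution by injective $S$-modules, term by term. Once that identification is secured the remainder is the routine Gorenstein dimension-shifting sketched above, and the role of the two finiteness hypotheses becomes transparent: $\id_R C < \infty$ is what makes $S$ Gorenstein, while $\dim R = d < \infty$ is what bounds $\id_S S$ and thereby kills the higher $\Ext$'s. I should stress that a purely $R$-side argument is not available, since the modules in $\mathcal{I}_C(R)$ need not have finite projective dimension over $R$ when $C \not\cong R$; thus the completeness condition, i.e. the $\Hom_R(\mathcal{I}_C,-)$-exactness of the doubly infinite complex, genuinely forces the detour through the Gorenstein ring $S$.
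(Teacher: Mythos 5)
Your handling of the two ``only if'' directions and of part (ii) (dualize into an injective cogenerator, use the adjunction $\Hom_R(C\otimes_R F^i,E)\cong\Hom_R(C,\Hom_R(F^i,E))$ and Lemma 2.6) agrees with the paper. The problem is the converse of (i), where you leave the decisive step unproved and, worse, explicitly rule out the argument that actually works. You assert that ``a purely $R$-side argument is not available, since the modules in $\mathcal{I}_C(R)$ need not have finite projective dimension over $R$ when $C\not\cong R$.'' Under the standing hypotheses this is false: since $C$ is dualizing, applying $\Hom_R(-,I)$ to a finite injective resolution of $C$ gives $\fd_R(\Hom_R(C,I))\le \id_R(C)\le d$ for every injective $I$, and then \cite[Corollary 3.4]{F1} (finite flat dimension forces finite projective dimension over a ring of finite Krull dimension) yields $\pd_R(\Hom_R(C,I))\le d$. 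This is precisely the paper's proof: every module in $\mathcal{I}_C$ has $\pd_R\le d$, so dimension shifting along the given left resolution $(*)$ gives $\Ext^{i}_R(\mathcal{I}_C,M)=0$ for $i\ge 1$; the same applies to each kernel $K_j$ of $(*)$, since each again has a left $\mathcal{I}_C$-resolution; and these two vanishing statements are exactly the $\Hom_R(\mathcal{I}_C,-)$-exactness of the complete $\mathcal{I}_C\mathcal{I}$-coresolution obtained by splicing $(*)$ with any injective coresolution of $M$.

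Your substitute route through $S=R\ltimes C$ has a genuine hole at what you yourself call the ``transport step.'' The result of \cite[Theorem 2.16]{HJ} identifies the \emph{class} of G$_C$-injective $R$-modules with the class of Gorenstein injective $S$-modules; it does not convert the given exact sequence $\cdots\to\Hom_R(C,I_1)\to\Hom_R(C,I_0)\to M\to 0$ into a resolution of $M$ by injective $S$-modules. Injective $S$-modules restrict to $R$-modules of the form $I\oplus\Hom_R(C,I)$, not to modules in $\mathcal{I}_C(R)$, so the given complex is not termwise an injective resolution over $S$, and some nontrivial construction (or an $\Ext$-vanishing characterization of Gorenstein injectives over the Gorenstein ring $S$ that you would then have to verify from the $R$-side data) is still required. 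As written, the proposal does not establish the converse of (i), and the obstacle you cite as forcing the detour through $S$ is exactly the point that the dualizing hypothesis removes.
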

\begin{proof}
(i). The definition of a G$_C$-injective $R$-module implies the existence of such an exect sequence.  Now assume that $M$ is an $R$-module for which there exists an exact sequence\\
\centerline{$ \cdots \rightarrow \Hom_R(C,I_n) \rightarrow \cdots \rightarrow \Hom_R(C,I_1) \rightarrow \Hom_R(C,I_0) \rightarrow M \rightarrow 0$, $(*)$}
with $I_i$ is injective for all $i \geq 0$. By the definition, we have to show that the sequence $(*)$ is $\Hom_R(\mathcal{I}_C,-)$-exact and that $ \Ext^i_R(\mathcal{I}_C , M) = 0 $ for all $i \geq 0$. Note that for any injective $R$-module $I$, we have $\fd_R(\Hom_R(C,I)) \leq d$ since $ \id_R(C) =d $. Hence \cite[Corollary 3.4]{F1} implies that $\pd_R (\Hom_R(C,I)) \leq d$. Therefore
all modules in $ \mathcal{I}_C $ have finite projective dimension less than or equal to $d$. Now breaking up $(*)$ to short exact sequences and using the same argument as in proof of Proposition 3.1, one can see that $ \Ext^i_R(\mathcal{I}_C , M) = 0 $ for
 all $i \geq 0$. Next suppose that $K_j$ is the image of the $j$-th boundary map of the complex $(*)$ for $j \geq 1$ . Then $K_j$ has a left resolution by modules in $ \mathcal{I}_C $. hence the same argument shows that $ \Ext^1_R(\mathcal{I}_C , K_j) = 0 $. Consequently, the exact sequence $(*)$ is $\Hom_R(\mathcal{I}_C,-)$-exact, whence $M$ is G$_C$-injective.

 (ii). If $N$ is G$_C$-flat then there exists such an exact sequence. Now assume that $N$ is an $R$-module for which there exists an exaxt sequence\\
 \centerline{$ 0 \rightarrow N \rightarrow C \otimes_R F^0 \rightarrow C \otimes_R F^1 \rightarrow \cdots \rightarrow C \otimes_R F^n \rightarrow \cdots $, $ (**) $}
in which $F^i$ is flat for all $i \geq 0$. Suppose that $E$ is an injective cogenerator. Then by applying the exact functor $\Hom_R( - , E)$ to $(**)$, we obtain an exact sequence\\
\centerline{$ \cdots \rightarrow \Hom_R(C \otimes_R F^n , E) \rightarrow \cdots \rightarrow \Hom_R(C \otimes_R F^n ,E) $}
\centerline{$ \rightarrow \Hom_R(C \otimes_R F^n , E) \rightarrow \Hom_R( N , E) \rightarrow 0 $.}
Note that $\Hom_R(C \otimes_R F^i , E)\cong \Hom_R(C  , \Hom_R( F^i , E))$ and that $\Hom_R( F^i , E)$ is an injective $R$-module for all $i \geq 0$. Thus (ii) follows from (i) and Lemma 2.6.
\end{proof}
\begin{thm}\label{A2}
Let R be a ring of finite Krull dimension d and let $C$ be dulalizing. Then any strongly cotorsion $R$-module is $\emph{G}_C\emph{-}$injective.
\end{thm}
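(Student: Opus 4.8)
The plan is to invoke Lemma 3.2(i): since $C$ is dualizing, an $R$-module is G$_C$-injective as soon as it admits \emph{some} exact left resolution by modules in $\mathcal{I}_C$, the $\Hom_R(\mathcal{I}_C,-)$-exactness and the vanishing of $\Ext^{i}_R(\mathcal{I}_C,-)$ being automatic from $\id_R(C)=d<\infty$. Thus, for a strongly cotorsion module $M$, it suffices to build an exact sequence $\cdots \to \Hom_R(C,I_1)\to \Hom_R(C,I_0)\to M\to 0$ with each $I_j$ injective. I would build it one step at a time: set $M_0=M$; given $M_j\in\mathcal{M}_{\scot}$, use that $\mathcal{I}_C(R)$ is covering (Lemma 2.16(i)) to choose an $\mathcal{I}_C$-cover $\pi_j\colon \Hom_R(C,I_j)\to M_j$, put $M_{j+1}=\Ker\pi_j$, and splice. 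The construction then rests on two recurring claims: (S) each $\pi_j$ is surjective, and (K) each kernel $M_{j+1}$ is again strongly cotorsion, so that the induction continues.

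For (K) I would argue homologically. By the first half of the proof of Proposition 3.1 we have $\mathcal{I}_C\subseteq\mathcal{M}_{\scot}$, so in the short exact sequence $0\to M_{j+1}\to \Hom_R(C,I_j)\to M_j\to 0$ the middle term $\Hom_R(C,I_j)$ and the quotient $M_j$ are both strongly cotorsion. Feeding this into the long exact sequence $\Ext^{\bullet}_R(F,-)$ for an arbitrary $F$ of finite flat dimension yields $\Ext^{i}_R(F,M_{j+1})=0$ for all $i\ge 2$ at once, and reduces the vanishing of $\Ext^{1}_R(F,M_{j+1})$ to the surjectivity of $\Hom_R(F,\Hom_R(C,I_j))\to \Hom_R(F,M_j)$. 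Since $\mathcal{M}_{\scot}$ is only coresolving (closed under extensions and cokernels of monomorphisms, not under kernels of epimorphisms), this last point is not free; the clean route is to arrange that $\pi_j$ is not merely a cover but a \emph{special} $\mathcal{I}_C$-precover, with kernel in the appropriate right $\Ext$-orthogonal. Then (K) follows and, crucially, must be made compatible with the covering choice used in (S).

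The heart of the matter is (S): I must show that a strongly cotorsion module is an epimorphic image of a module in $\mathcal{I}_C$. Here the natural tool is the $\Hom_R(-,E)$-duality already exploited in Proposition 3.1(ii) and Lemmas 2.6 and 2.15. Applying $\Hom_R(-,E)$ for an injective cogenerator $E$, together with the Hom-evaluation isomorphism $\Hom_R(\Hom_R(C,I),E)\cong C\otimes_R\Hom_R(I,E)$ (valid since $C$ is finitely presented and $E$ is injective) and the fact that $\Hom_R(I,E)$ is flat for injective $I$ over a Noetherian ring, converts an $\mathcal{I}_C$-surjection onto $M$ into an $\mathcal{F}_C$-embedding of $\Hom_R(M,E)$; such an embedding should come from the $\mathcal{F}_C$-preenvelope guaranteed by Lemma 2.16(i), whose injectivity is controlled on the flat side exactly as in the cited results.

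I expect (S) — equivalently, the ``divisibility'' statement that every strongly cotorsion module is a quotient of a module in $\mathcal{I}_C$ — to be the main obstacle, since it is precisely the content separating $\mathcal{M}_{\scot}$ from $\mathcal{GI}_C$. Note that strongly cotorsion modules need not lie in $\mathcal{B}_C(R)$ (the canonical module $C$ has infinite flat dimension, so strongly cotorsion does not force $\Ext^{i}_R(C,M)=0$), so one cannot shortcut (S) via the Bass class and Lemma 2.16(ii). Moreover the duality above naturally deposits $\Hom_R(\Hom_R(M,E),E)$, not $M$ itself, in $\mathcal{GI}_C$; the resulting double-dual discrepancy is the delicate point, and I anticipate absorbing it by working with $\mathcal{I}_C$-covers of $M$ directly and exploiting the divisibility of strongly cotorsion modules, rather than passing through the character module.
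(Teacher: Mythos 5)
Your reduction to Lemma 3.2(i) and your identification of the two recurring claims (S) and (K) match the paper's strategy, and you are right that (S) --- every strongly cotorsion module is an epimorphic image of a module in $\mathcal{I}_C(R)$ --- is the crux. But you do not actually prove it. The character-module route you sketch lands you with $\Hom_R(\Hom_R(M,E),E)$ rather than $M$, as you yourself note, and your proposed fix (``exploiting the divisibility of strongly cotorsion modules'') is circular: that divisibility \emph{is} claim (S). The missing construction is a pushout. Choose a projective surjection $\theta\colon P\to M$. Since $P$ is flat, $P\in\mathcal{A}_C(R)$, so by Lemma 2.10(ii) its $\mathcal{I}_C$-preenvelope $\tau\colon P\to\Hom_R(C,E)$ is injective. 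Form the pushout of $\theta$ and $\tau$; the resulting right-hand column is $0\to M\to X\to Y\to 0$ with $Y=\coker\tau$, and $X$ a quotient of $\Hom_R(C,E)$. Because $C$ is dualizing, $\fd_R(\Hom_R(C,E))<\infty$, hence $\fd_R(Y)<\infty$ from the middle column, and the hypothesis that $M$ is strongly cotorsion gives $\Ext^1_R(Y,M)=0$, splitting the column and exhibiting $M$ as a quotient of $\Hom_R(C,E)$. Note where the strongly cotorsion hypothesis actually enters: against the cokernel $Y$ of finite flat dimension, not through any covering or duality argument.

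Your treatment of (K) is also incomplete. You correctly get $\Ext^i_R(F,M_{j+1})=0$ for $i\ge 2$ from the long exact sequence together with $\mathcal{I}_C\subseteq\mathcal{M}_{\scot}$, but you leave the case $i=1$ hanging on an unproved surjectivity of $\Hom_R(F,\Hom_R(C,I_j))\to\Hom_R(F,M_j)$, hoping to repair it with ``special'' precovers. The paper dispatches it by a dimension shift on the \emph{other} variable: since $F$ has finite flat dimension, $F\in\mathcal{A}_C(R)$, so $F$ embeds in some $\Hom_R(C,E')$ with cokernel $W$ again of finite flat dimension; then $\Ext^1_R(F,M_{j+1})\cong\Ext^2_R(W,M_{j+1})=0$ by the degree-$\ge 2$ vanishing already established. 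Both devices --- the pushout along an injective $\mathcal{I}_C$-preenvelope of a flat module, and the $\mathcal{I}_C$-coresolution of $F$ itself --- are absent from your proposal, and without them neither (S) nor (K) is established.
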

\begin{proof}
Let $M$ be a strongly cotorsion $R$-module. In view of lemma 3.2, we need only to construct an exact sequence of the form \\
 \centerline{$ \cdots \rightarrow \Hom_R(C,I_n) \rightarrow \cdots \rightarrow \Hom_R(C,I_1) \rightarrow \Hom_R(C,I_0) \rightarrow M \rightarrow 0$,}
 in which $I_i$ is injective for all $i \geq 0$. Assume that $ \varphi : \Hom(C,I_0) \rightarrow M$ is the $\mathcal{I}_C$-precover of $M$. First we show that $ \varphi $ is surjective and that $ \ker \varphi $ is cotorsion. We can consider a surjective homomorphism
 $ \theta : P \rightarrow M $ with $P$ is projective. Now the $\mathcal{I}_C$-preenvelope of $P$ is injective by Lemma 2.10(ii), since
 $ P \in \mathcal{A}_C(R) $. Assume that $ \tau : P \rightarrow \Hom(C,E) $ is an injective $\mathcal{I}_C$-preenvelope of $P$. Consider the following push-out diagram:
 \begin{displaymath}
 \xymatrix{
 	&&  0  \ar[d] &  0  \ar[d]  \\
 	  0 \ar[r] & K \ar[r] \ar@{=}[d] & P \ar[r]^{\varphi} \ar[d]_{\tau} & M \ar[r] \ar[d] & 0 \\
    0 \ar[r] & K \ar[r] & \Hom_R(C,E) \ar[r] \ar[d] & X \ar[r] \ar[d] & 0 \\
    &&  Y \ar@{=}[r] \ar[d] & Y \ar[d] \\
    &&  0 &  0             \\
 }
 \end{displaymath}
 where $K = \ker \varphi $ and $Y= \coker \tau$. Note that $\fd_R (\Hom_R(C,E)) < \infty$ and that $P$ is flat. Hence $\fd_R(Y) < \infty$ since the middle column is exact. Now since $M$ is strongly cotorsion, we have $ \Ext^1_R(Y,M) = 0 $ which shows that the right hand side column is split. Consequently, there is an epimorphism $ \Hom_R(C , E) \rightarrow M $ from which we conclude that the $\mathcal{I}_C$-precover of $M$ is surjective. Therefore we have the exact sequence \\
\centerline{$ 0 \rightarrow Z \rightarrow \Hom(C,I_0) \overset{\varphi}\rightarrow M \rightarrow 0 ,$ $ (*) $}
in which $ Z = \ker \varphi$. Next we show that $Z$ is strongly cotorsion. Assume that $F$ is an $R$-module of finite flat dimension. Application of the functor $\Hom_R(F , -)$ on the exact sequence $ (*) $ yields  $ \Ext^2_R(F , Z) \cong \Ext^1_R(F , M) = 0 $. The fact that
$ F \in \mathcal{A}_C(R) $ implies that the  $\mathcal{I}_C$-preenvelope of $F$ is injective by Lemma 2.10(ii). Hence there is an exact sequence \\
\centerline{$ 0 \rightarrow F \rightarrow \Hom(C,E') \rightarrow W \rightarrow 0 ,$ $ (**) $}
in which $E'$ is injective and $ W = \coker (F \rightarrow \Hom_R(C,E'))$. Again $ \fd_R(W) < \infty $ since both $F$ and $ \Hom_R(C,E') $ have finite flat dimensions. But then $ \Ext^2_R(W , Z) = 0$, as we have seen before. An application of the functor $ \Hom_R(- , Z ) $ on the exact sequence $ (**) $ yields $ \Ext^1_R(F , Z) \cong \Ext^2_R(W , Z) = 0 $, which shows that $Z$ is cotorsion. Finally, we can proceed in the same way to get the exact sequence \\
 \centerline{$ \cdots \rightarrow \Hom_R(C,I_n) \rightarrow \cdots \rightarrow \Hom_R(C,I_1) \rightarrow \Hom_R(C,I_0) \rightarrow M \rightarrow 0$.}
 Hence $M$ is G$_C$-injective by Lemma 3.2.
\end{proof}
\begin{cor}\label{A2}
Let R be a ring of finite Krull dimension d and let $C$ be dulalizing.
\begin{itemize}
	\item[(i)]{One has $ \mathcal{GI}_C = \mathcal{M}_{\emph{\scot}} $.}
	\item[(ii)] {One has $ \mathcal{GF}_C = \mathcal{M}_{\emph{\stf}} $.}
 \end{itemize}
  \end{cor}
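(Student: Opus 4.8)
The plan is to deduce both equalities purely by assembling results already in hand, so that no new construction is required. For part~(i), the inclusion $\mathcal{GI}_C \subseteq \mathcal{M}_{\scot}$ is exactly Proposition~3.1(i), which holds without any dualizing hypothesis on $C$. The reverse inclusion $\mathcal{M}_{\scot} \subseteq \mathcal{GI}_C$ is precisely the content of Theorem~3.3, where the dualizing assumption is used. Combining the two containments gives $\mathcal{GI}_C = \mathcal{M}_{\scot}$, and this is all that (i) requires.

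For part~(ii), one containment, $\mathcal{GF}_C \subseteq \mathcal{M}_{\stf}$, is again free from Proposition~3.1(ii). For the reverse inclusion $\mathcal{M}_{\stf} \subseteq \mathcal{GF}_C$, I would transport the problem from the ``flat'' side to the ``injective'' side by means of the contravariant, faithful functor $\Hom_R(-,E)$, where $E$ is a fixed injective cogenerator. Concretely, fix $N \in \mathcal{M}_{\stf}$. Lemma~2.15(i) converts this into $\Hom_R(N,E) \in \mathcal{M}_{\scot}$; part~(i), already established, upgrades this to $\Hom_R(N,E) \in \mathcal{GI}_C$; and Lemma~2.6 then reads the conclusion back as $N \in \mathcal{GF}_C$. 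Hence $\mathcal{M}_{\stf} \subseteq \mathcal{GF}_C$, completing (ii).

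The only points demanding attention are organizational rather than substantive. First, part~(i) must be proved before part~(ii), since the chain of implications in (ii) invokes the equality $\mathcal{M}_{\scot} = \mathcal{GI}_C$ after applying $\Hom_R(-,E)$. Second, one should check that the biconditionals of Lemmas~2.6 and~2.15(i) are used in the correct direction, but both are genuine ``if and only if'' statements, so this is immediate. I do not expect a real obstacle here: all the difficulty of the section is concentrated in Theorem~3.3 (building an $\mathcal{I}_C$-resolution of a strongly cotorsion module from its $\mathcal{I}_C$-precover and verifying $\Hom_R(\mathcal{I}_C,-)$-exactness), and the present corollary merely packages that theorem together with the two duality lemmas.
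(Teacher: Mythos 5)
Your proposal is correct and follows essentially the same route as the paper: part (i) is Proposition 3.1(i) combined with Theorem 3.3, and part (ii) is obtained by transporting the statement through $\Hom_R(-,E)$ via Lemma 2.15(i), part (i), and Lemma 2.6, exactly as the paper's one-line proof indicates. The only cosmetic difference is that you cite Proposition 3.1(ii) for the containment $\mathcal{GF}_C \subseteq \mathcal{M}_{\stf}$ while the paper cites 3.1(i), but 3.1(ii) is itself deduced from 3.1(i) by the same duality, so this is immaterial.
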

 \begin{proof}
 	For (i), use Proposition 3.1(i) and Theorem 3.3. For (ii), use part (i), Lemma 2.6, Lemma 2.15 and Proposition 3.1(i).
 \end{proof}

\section{Generalizations}
In the present section, by using a semidualizing module, we extend the notions of Definition 2.12 and the homological dimensions described in the Remark 2.13. First, we define some new classes of modules. Next, we study the properties of these classes and related homological dimensions. One of our main goals is to impose suitable conditions on $C$ to be dualizing. The next goal is to find relations between these new classes with those of Notion 2.14. To do this, we use some properties of Auslander and Bass classes.  
\begin{defn}
\emph{A submodule $ N $ of an $R$-module $ M $ is said to be a $ C $-\textit{copure} submodule if $ M/N \in \mathcal{I}_C$, and
an $R$-module $M$ is said to be $ C $-\textit{copure injective} (resp. $ C $-\textit{copure flat}) if it is injective (resp. flat) with respect to all $ C $-copure
submodules; i.e.,  $ M $ is $ C $-copure injective (resp. $ C $-copure flat) if $ \Ext^1_R(\mathcal{I}_C , M) = 0 $
(resp. $ \Tor^R_1(\mathcal{I}_C , M) = 0 $). An $R$-module $ M $ is said to be
a \textit{strongly} $ C $-\textit{copure injective} (resp. \textit{strongly} $ C $-\textit{copure flat}) $R$-module if $ \Ext^i_R(\mathcal{I}_C , M) = 0 $
(resp. $ \Tor^R_i(\mathcal{I}_C , M) = 0 $) for all $ i \geq 1 $. An $R$-module $M$  is said to be \textit{strongly} $ C $-\textit{copure projective} if
$ \Ext^i_R(M , \mathcal{F}_C) = 0 $ for all $ i \geq 1 $.
An $R$-module $M$ is said to be $C$-\textit{cotorsion} if $ \Ext^1_R(\mathcal{F}_C , M) = 0 $. Also $ M $ is said to be
a \textit{strongly} $ C $-\textit{cotorsion} (resp. \textit{strongly} $ C $-\textit{torsionfree}) $R$-module if $ \Ext^i_R(\mathcal{F}_C , M) = 0 $ (resp. $ \Tor^R_i(\mathcal{F}_C , M) = 0 $) for all $ i \geq 1 $.}
\end{defn}

\begin{no}
\emph{We use the notations $ \mathcal{M}^C_{\Cot} $, $ \mathcal{M}^C_{\ci} $, $ \mathcal{M}^C_{\cf} $, and $ \mathcal{M}^C_{\cp} $ to denote the full subcategories of $ C $-cotorsion, $ C $-copure injective,  $ C $-copure flat and $ C $-copure projective modules. Also, we use the notations $ \mathcal{M}^C_{\scot} $, $ \mathcal{M}^C_{\stf} $, $ \mathcal{M}^C_{\sci} $, $ \mathcal{M}^C_{\scf} $ and $ \mathcal{M}^C_{\scp} $ to denote the full subcategories of strongly $ C $-cotorsion, strongly $ C $-torsionfree, strongly $ C $-copure injective, strongly $ C $-copure flat $R$-modules and strongly $ C $-copure projective $R$-modules, respectively.
By the above definition, it is clear that $ \mathcal{GP}_C \subseteq \mathcal{M}^C_{\scp} $, $ \mathcal{GI}_C \subseteq \mathcal{M}^C_{\sci} $ and $ \mathcal{GF}_C \subseteq \mathcal{M}^C_{\scf} $. }
\end{no}
It is clear that the classes $ \mathcal{M}^C_{\scp} $, $ \mathcal{M}^C_{\scf} $ and $ \mathcal{M}^C_{\stf} $ are resolving and that the classes $ \mathcal{M}^C_{\sci} $ and $ \mathcal{M}^C_{\scot} $ are coresolving. In particular, the classes $ \mathcal{M}_{\scp} $, $ \mathcal{M}_{\scf} $ and $ \mathcal{M}_{\stf} $ are resolving and that the classes $ \mathcal{M}_{\sci} $ and
$ \mathcal{M}_{\scot} $ are coresolving.

\begin{prop}
The following statements hold true:
\begin{itemize}
	\item[(i)]{The class $ \mathcal{M}^C_{\emph{\sci}} $  is closed under arbitrary direct product.}
	\item[(ii)]{The classes $ \mathcal{M}^C_{\emph{\scp}} $ and $ \mathcal{M}^C_{\emph{\scf}} $ are closed under arbitrary direct sum.}
\end{itemize}
\end{prop}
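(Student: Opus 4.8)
The plan is to reduce all three assertions to the standard interaction of the derived functors $\Ext$ and $\Tor$ with products and coproducts, and then to feed in the defining vanishing conditions of each class. The three facts I would record at the outset are the following natural isomorphisms, valid for any $R$-module $N$, any family $\{M_\lambda\}_{\lambda \in \Lambda}$ of $R$-modules, and every $i \geq 0$:
\[
\Ext^i_R\!\left(N, \prod_{\lambda} M_\lambda\right) \cong \prod_{\lambda} \Ext^i_R(N, M_\lambda),
\]
\[
\Ext^i_R\!\left(\bigoplus_{\lambda} M_\lambda, N\right) \cong \prod_{\lambda} \Ext^i_R(M_\lambda, N),
\]
\[
\Tor^R_i\!\left(N, \bigoplus_{\lambda} M_\lambda\right) \cong \bigoplus_{\lambda} \Tor^R_i(N, M_\lambda).
\]
The first follows by computing with a projective resolution $P_\bullet$ of $N$, using $\Hom_R(P, \prod_\lambda M_\lambda) \cong \prod_\lambda \Hom_R(P, M_\lambda)$ together with the exactness of products in the category of $R$-modules, so that cohomology commutes with the product. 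The second follows by taking projective resolutions $P_\bullet^\lambda$ of each $M_\lambda$, noting that $\bigoplus_\lambda P_\bullet^\lambda$ is a projective resolution of $\bigoplus_\lambda M_\lambda$ and that $\Hom_R(\bigoplus_\lambda P^\lambda, N) \cong \prod_\lambda \Hom_R(P^\lambda, N)$. The third follows from a projective resolution of $N$, using that the tensor product and homology both commute with direct sums.

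Granting these, each part is immediate. For (i), let $\{M_\lambda\}$ be a family in $\mathcal{M}^C_{\sci}$. For every $I \in \mathcal{I}_C(R)$ and every $i \geq 1$, the first isomorphism yields $\Ext^i_R(I, \prod_\lambda M_\lambda) \cong \prod_\lambda \Ext^i_R(I, M_\lambda) = 0$, whence $\prod_\lambda M_\lambda \in \mathcal{M}^C_{\sci}$. For the first class in (ii), let $\{M_\lambda\}$ lie in $\mathcal{M}^C_{\scp}$; for every $F \in \mathcal{F}_C(R)$ and $i \geq 1$ the second isomorphism gives $\Ext^i_R(\bigoplus_\lambda M_\lambda, F) \cong \prod_\lambda \Ext^i_R(M_\lambda, F) = 0$, so $\bigoplus_\lambda M_\lambda \in \mathcal{M}^C_{\scp}$. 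For the second class in (ii), let $\{M_\lambda\}$ lie in $\mathcal{M}^C_{\scf}$; for every $I \in \mathcal{I}_C(R)$ and $i \geq 1$ the third isomorphism gives $\Tor^R_i(I, \bigoplus_\lambda M_\lambda) \cong \bigoplus_\lambda \Tor^R_i(I, M_\lambda) = 0$, so $\bigoplus_\lambda M_\lambda \in \mathcal{M}^C_{\scf}$.

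There is no genuine obstacle here; the only point requiring attention is the bookkeeping of variance, and this is exactly what dictates the form of the statement. A direct product is matched with the second (covariant) variable of $\Ext$, giving closure of $\mathcal{M}^C_{\sci}$ under products; a direct sum is matched with the first (contravariant) variable of $\Ext$, which converts a coproduct into a product of $\Ext$-groups, and since a product of zero groups is zero this still yields closure of $\mathcal{M}^C_{\scp}$ under sums; finally $\Tor$ commutes with direct sums outright, giving closure of $\mathcal{M}^C_{\scf}$ under sums. I would emphasise that it is precisely this pairing of functor and variable that determines whether the relevant class is closed under products or under sums.
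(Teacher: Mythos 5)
Your proof is correct and follows essentially the same route as the paper, which simply invokes the standard isomorphisms $\Ext^i_R(N,\prod_\lambda M_\lambda)\cong\prod_\lambda\Ext^i_R(N,M_\lambda)$, $\Ext^i_R(\bigoplus_\lambda M_\lambda,N)\cong\prod_\lambda\Ext^i_R(M_\lambda,N)$ and $\Tor^R_i(N,\bigoplus_\lambda M_\lambda)\cong\bigoplus_\lambda\Tor^R_i(N,M_\lambda)$ (citing Rotman) and applies them to the defining vanishing conditions. The only difference is that you additionally justify these isomorphisms via resolutions, which the paper leaves to the reference.
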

\begin{proof}
	(i). Suppose that $ \{M_{\alpha}\}_{\alpha \in I} $ is a collection of modules in $ \mathcal{M}^C_{\sci} $. By \cite[Proposition 7.22]{R}, there is an isomorphism $ \Ext_R^i(\mathcal{I}_C,  \underset{\alpha \in I} \prod M_{\alpha} ) \cong \underset{\alpha \in I} \prod \Ext_R^i(\mathcal{I}_C, M_{\alpha} ) $ for any $ i \geq 0 $.
	
	(i). Suppose that $ \{M_{\alpha}\}_{\alpha \in I} $ and $ \{N_{\beta}\}_{\beta \in J} $ are collections of modules in $ \mathcal{M}^C_{\scp} $ and $ \mathcal{M}^C_{\scf} $, respectively. By \cite[Proposition 7.21 and 7.6]{R}, there are isomorphisms $ \Ext_R^i(\underset{\alpha \in I} \bigoplus M_{\alpha} , \mathcal{F}_C ) \cong \underset{\alpha \in I} \prod \Ext_R^i(M_{\alpha},\mathcal{F}_C) $ and $ \Tor_i^R(\mathcal{I}_C,  \underset{\beta \in J} \bigoplus N_{\beta} ) \cong \underset{\beta \in J} \bigoplus \Tor_i^R(\mathcal{I}_C, N_{\beta} ) $ for any $ i \geq 0 $.	
\end{proof}
\begin{prop}
The following statements hold true:
	\begin{itemize}
		\item[(i)]{The pair $ \Big( $ $ ^{\perp}\mathcal{M}^C_{\emph{\sci}} , \mathcal{M}^C_{\emph{\sci}} $ $ \Big) $ is a hereditary cotorsion theory.}
		\item[(ii)]{The pair $ \Big( $ $ ^{\perp}\mathcal{M}^C_{\emph{\scot}} , \mathcal{M}^C_{\emph{\scot}} $ $ \Big) $ is a hereditary cotorsion theory.}
		\item[(iii)]{The pair $ \Big( $ $ \mathcal{M}^C_{\emph{\scp}} , (\mathcal{M}^C_{\emph{\scp}})^{\perp} $ $ \Big) $ is a hereditary cotorsion theory.}
	\end{itemize}
\end{prop}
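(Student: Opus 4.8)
To prove that the three displayed pairs are hereditary cotorsion theories, I would treat each pair by the same template and verify the two defining equalities of Definition 2.11 together with the hereditary condition $\Ext^i_R(\mathcal{X},\mathcal{Y})=0$ for all $i\ge1$. I focus on part (i), the pair $\big(\,{}^{\perp}\mathcal{M}^C_{\sci},\,\mathcal{M}^C_{\sci}\,\big)$; parts (ii) and (iii) follow by the identical mechanism (and part (ii) is moreover the special case $C=R$ after replacing $\mathcal{I}_C$ by $\mathcal{F}_C$, while part (iii) uses $\Ext$ in the first variable against $\mathcal{F}_C$). The first equality $\mathcal{X}={}^{\perp}\mathcal{Y}$ is immediate by construction, since we have \emph{defined} the left-hand class to be ${}^{\perp}\mathcal{M}^C_{\sci}$. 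The real content is the reverse equality $\mathcal{X}^{\perp}=\mathcal{Y}$, i.e. $\big({}^{\perp}\mathcal{M}^C_{\sci}\big)^{\perp}=\mathcal{M}^C_{\sci}$.

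\textbf{Cotorsion theory generated by a set.}
The clean way to get the reverse equality is to exhibit $\mathcal{M}^C_{\sci}$ as the right-hand class of a cotorsion theory \emph{cogenerated by a set} of modules, and then invoke the standard fact (Eklof--Trlifaj) that for any set $\mathcal{S}$ the pair $\big({}^{\perp}(\mathcal{S}^{\perp}),\,\mathcal{S}^{\perp}\big)$ is a complete cotorsion theory. First I would note that, by dimension shifting, $M\in\mathcal{M}^C_{\sci}$ (defined by vanishing of all $\Ext^{i\ge1}_R(\mathcal{I}_C,M)$) is equivalent to the vanishing of $\Ext^1_R$ against all \emph{syzygies} of modules in $\mathcal{I}_C$; hence $\mathcal{M}^C_{\sci}=\mathcal{S}^{\perp}$ for the class $\mathcal{S}$ consisting of $\mathcal{I}_C$ together with a representative set of its syzygy modules. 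The key point making $\mathcal{S}$ a \emph{set} (not a proper class) is that $\mathcal{I}_C(R)$ is generated, up to the relevant $\Ext$-vanishing, by $\Hom_R(C,E)$ for $E$ ranging over injective modules, and over a Noetherian ring every injective is a direct sum of indecomposables $E(R/\fp)$, $\fp\in\Spec R$, so a representative set exists. With $\mathcal{M}^C_{\sci}=\mathcal{S}^{\perp}$, the Eklof--Trlifaj theorem gives ${}^{\perp}\mathcal{M}^C_{\sci}={}^{\perp}(\mathcal{S}^{\perp})$ and $\big({}^{\perp}\mathcal{M}^C_{\sci}\big)^{\perp}=\mathcal{S}^{\perp}=\mathcal{M}^C_{\sci}$, which is exactly the reverse equality.

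\textbf{Heredity.}
For the hereditary condition I would use the observation already recorded in the text that $\mathcal{M}^C_{\sci}$ is coresolving (for (ii) likewise coresolving, for (iii) the class $\mathcal{M}^C_{\scp}$ is resolving). A cotorsion theory $(\mathcal{X},\mathcal{Y})$ in which $\mathcal{Y}$ is coresolving (equivalently $\mathcal{X}$ is resolving) is automatically hereditary: given $X\in\mathcal{X}$ and $Y\in\mathcal{Y}$, a short exact sequence $0\to Y\to I\to Y'\to0$ with $I$ injective keeps $Y'\in\mathcal{Y}$ (coresolving), and the long exact sequence in $\Ext_R(X,-)$ then yields $\Ext^{i+1}_R(X,Y)\cong\Ext^i_R(X,Y')$, so by induction every $\Ext^{i\ge1}_R(X,Y)$ reduces to $\Ext^1_R(X,\mathcal{Y})=0$. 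This is the standard dimension-shift argument; for (iii) one shifts in the first variable using that $\mathcal{M}^C_{\scp}$ is resolving and $\mathcal{F}_C\subseteq(\mathcal{M}^C_{\scp})^{\perp}$ by definition.

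\textbf{Main obstacle.}
The routine parts are the two $\Ext$-shifting arguments and the formal equality $\mathcal{X}={}^{\perp}\mathcal{Y}$. The genuine obstacle is the set-theoretic step: ensuring $\mathcal{M}^C_{\sci}$ is of the form $\mathcal{S}^{\perp}$ for an actual \emph{set} $\mathcal{S}$, so that Eklof--Trlifaj applies and the reverse perp-equality holds. This is where Noetherianity of $R$ (Matlis' structure theory of injectives) is essential, and it is the place I would be most careful; if the authors instead want to avoid Eklof--Trlifaj, the alternative is to directly construct special $\mathcal{M}^C_{\sci}$-preenvelopes, but that is more laborious and the cogenerated-by-a-set route is cleaner.
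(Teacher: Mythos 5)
Your proposal is correct, but it reaches the key equality $\big({}^{\perp}\mathcal{M}^C_{\sci}\big)^{\perp}=\mathcal{M}^C_{\sci}$ by a different route than the paper. The paper argues directly: it first notes that $\mathcal{I}_C\subseteq{}^{\perp}\mathcal{M}^C_{\sci}$, then shows that ${}^{\perp}\mathcal{M}^C_{\sci}$ is closed under taking syzygies (using that $\mathcal{M}^C_{\sci}$ is coresolving to shift the $\Ext$ in the second variable), so for $M$ in the double perp an induction gives $\Ext^i_R(A,M)=0$ for all $A\in{}^{\perp}\mathcal{M}^C_{\sci}$ and all $i\geq 1$, and in particular $\Ext^i_R(\mathcal{I}_C,M)=0$, i.e.\ $M\in\mathcal{M}^C_{\sci}$. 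You instead realize $\mathcal{M}^C_{\sci}$ as $\mathcal{S}^{\perp}$ for $\mathcal{S}$ the (co)class of syzygies of modules in $\mathcal{I}_C$; this is the same dimension-shifting observation packaged differently, and it does yield the conclusion. However, you misdiagnose where the work lies: the identity $\big({}^{\perp}(\mathcal{S}^{\perp})\big)^{\perp}=\mathcal{S}^{\perp}$ is a purely formal Galois-connection fact valid for an arbitrary \emph{class} $\mathcal{S}$, so neither the Eklof--Trlifaj theorem nor the set-theoretic reduction via Matlis' decomposition of injectives is needed for the statement as defined in Definition 2.11, which asks only for the two perpendicularity equalities and not for completeness. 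What your heavier machinery would buy, if carried through, is the additional conclusion that these cotorsion pairs are \emph{complete} (admit special preenvelopes and precovers), which the paper does not claim; the cost is reliance on external results where a two-line formal argument suffices. Your heredity argument (shifting along $0\to Y\to I\to Y'\to 0$ using that the right-hand class is coresolving, and dually for (iii) using that $\mathcal{M}^C_{\scp}$ is resolving) coincides with the paper's.
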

\begin{proof}
	(i). We have to show that $ \Big( ^{\perp}\mathcal{M}^C_{\sci} \Big)^{\perp} = \mathcal{M}^C_{\sci} $. The inclusion '$ \supseteq $' is trivial. Suppose that $ M \in \Big( ^{\perp}\mathcal{M}^C_{\sci} \Big)^{\perp}  $. We show that $ \Ext^i_R(^{\perp}\mathcal{M}^C_{\sci} ,M) =0 $ for all $ i \geq 1 $. Choose $ A \in ^{\perp}\mathcal{M}^C_{\sci} $ and $ B \in \mathcal{M}^C_{\sci} $. By our choice, we have
	$ \Ext^1_R(A ,M) = 0 $. There exist two exact sequences $ 0 \rightarrow A_1 \rightarrow P \rightarrow A \rightarrow 0 $ and
	$ 0 \rightarrow B \rightarrow E \rightarrow B^1 \rightarrow 0 $ in which $P$ is projective and $E$ is injective. Since the subcategory
	$ \mathcal{M}^C_{\sci} $ is coresolving, we have $ B^1 \in \mathcal{M}^C_{\sci} $. Hence $ \Ext^2_R(A , B) \cong \Ext^1_R(A , B^1) = 0 $. On the other hand, we have the isomorphism $\Ext^1_R(A_1 , B) \cong \Ext^2_R(A , B) $ which shows that $ \Ext^1_R(A_1 , B) = 0 $, whence
	$ A_1 \in ^{\perp}\mathcal{M}^C_{\sci} $. Therefore $ \Ext^2(A,M) \cong \Ext^1(A_1,M) = 0 $. Now we can proceed in the same way to see that $ \Ext^i(A,M) = 0 $ for all $ i \geq 1 $; i.e. $ \Ext^i_R(^{\perp}\mathcal{M}^C_{\sci} ,M) =0 $ for all $ i \geq 1 $. In particular,
	$ \Ext^i_R(\mathcal{I}_C ,M) =0 $ for all $ i \geq 1 $, whence $ M \in \mathcal{M}^C_{\sci} $. Hence $ \Big( ^{\perp}\mathcal{M}^C_{\sci} , \mathcal{M}^C_{\sci} \Big)$ is a cotorsion theory. Now since $ \mathcal{M}^C_{\sci} $ is coresolving, it is easy to see that $ \Big( ^{\perp}\mathcal{M}^C_{\sci} , \mathcal{M}^C_{\sci} \Big)$ is hereditary.
	
	(ii). Is similar to (i).
	
	(iii). Is dual to (i).
\end{proof}
The following lemma is similar to Lemma 2.15 and we left the proof to the reader.
\begin{lem}\label{A2}
	Let $M$ be an $R$-module and let $E$ be an injective cogenerator. The following statements hold:
	\begin{itemize}
		\item[(i)]{One has $ M \in \mathcal{M}^C_{\emph{\stf}} $ if and only if $ \emph{Hom}_R(M,E) \in \mathcal{M}^C_{\emph{\scot}} $.}
		\item[(ii)]{One has $ M \in \mathcal{M}^C_{\emph{\cf}} $ $ ( $resp. $ M \in \mathcal{M}^C_{\emph{\scf}} $$ ) $ if and only if $ \emph{Hom}_R(M,E) \in \mathcal{M}^C_{\emph{\ci}} $ $ ( $resp. $ \emph{Hom}_R(M,E) \in \mathcal{M}^C_{\emph{\sci}} $$ ) $.}
	\end{itemize}
\end{lem}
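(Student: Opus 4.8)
The plan is to mimic the proof of Lemma 2.15 almost verbatim, replacing the class of modules of finite flat dimension by the appropriate $C$-flavoured class and exploiting the standard Hom-tensor duality isomorphism together with the faithfulness of the functor $\Hom_R(-,E)$. The single ingredient needed throughout is that, since $E$ is an injective cogenerator, $\Hom_R(X,E) = 0$ if and only if $X = 0$.

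For part (i), I would fix an arbitrary module $F' \in \mathcal{F}_C(R)$ and invoke \cite[Theorem 3.2.1]{EJ1} to obtain the natural isomorphism
\[
\Hom_R(\Tor^R_i(F',M), E) \cong \Ext^i_R(F', \Hom_R(M,E))
\]
for every $i \geq 1$. The faithfulness of $\Hom_R(-,E)$ then shows that $\Tor^R_i(F',M) = 0$ if and only if $\Ext^i_R(F', \Hom_R(M,E)) = 0$. Letting $F'$ range over all of $\mathcal{F}_C(R)$ and $i$ over all $i \geq 1$ yields precisely $M \in \mathcal{M}^C_{\stf}$ if and only if $\Hom_R(M,E) \in \mathcal{M}^C_{\scot}$.

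For part (ii), the argument is identical with $\mathcal{I}_C(R)$ in place of $\mathcal{F}_C(R)$: for each $I' \in \mathcal{I}_C(R)$ the same result of \cite{EJ1} gives
\[
\Hom_R(\Tor^R_i(I',M), E) \cong \Ext^i_R(I', \Hom_R(M,E)),
\]
and faithfulness of $\Hom_R(-,E)$ again converts vanishing on the left into vanishing on the right. Reading this only at $i = 1$ produces the equivalence $M \in \mathcal{M}^C_{\cf}$ if and only if $\Hom_R(M,E) \in \mathcal{M}^C_{\ci}$, whereas reading it for all $i \geq 1$ produces the equivalence $M \in \mathcal{M}^C_{\scf}$ if and only if $\Hom_R(M,E) \in \mathcal{M}^C_{\sci}$.

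There is essentially no real obstacle here; the only point that requires care is the bookkeeping of the index range, namely using the full range $i \geq 1$ for the ``strongly'' classes $\mathcal{M}^C_{\scf}$ and $\mathcal{M}^C_{\sci}$ while restricting to $i = 1$ for the ordinary $C$-copure flat and $C$-copure injective classes $\mathcal{M}^C_{\cf}$ and $\mathcal{M}^C_{\ci}$. Because the displayed isomorphism holds naturally for every module in the relevant class, the quantifier over $\mathcal{F}_C(R)$ (resp.\ $\mathcal{I}_C(R)$) passes through the equivalence without difficulty, which is exactly why the statement is deferred as being ``similar to Lemma 2.15.''
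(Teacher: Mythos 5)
Your proposal is correct and is exactly the argument the paper intends: the paper explicitly leaves this lemma to the reader as ``similar to Lemma 2.15,'' and your proof simply reruns the Lemma 2.15 argument (the duality isomorphism $\Hom_R(\Tor^R_i(X,M),E)\cong\Ext^i_R(X,\Hom_R(M,E))$ from \cite[Theorem 3.2.1]{EJ1} plus faithfulness of $\Hom_R(-,E)$) with $\mathcal{F}_C(R)$ and $\mathcal{I}_C(R)$ in place of the modules of finite flat dimension, with the correct bookkeeping of the index range $i=1$ versus $i\geq 1$.
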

\begin{prop}
There is an inclusion $ \mathcal{M}^C_{\emph{\scp}} \subseteq \mathcal{M}^C_{\emph{\scf}} $. Also if $ M \in \mathcal{M}^C_{\emph{\scf}} $ is finitely generated, then $ M \in \mathcal{M}^C_{\emph{\scp}} $.
\end{prop}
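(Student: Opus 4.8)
The plan is to prove the two inclusions by different but essentially dual techniques, with a common engine: the duality isomorphism $\Hom_R(\Tor^R_i(A,B),E) \cong \Ext^i_R(A,\Hom_R(B,E))$ for an injective cogenerator $E$ (as in \cite[Theorem 3.2.1]{EJ1}), together with one structural observation about Pontryagin duals. Namely, I would first record that for an injective module $I$ the dual $\Hom_R(I,E)$ is flat (this uses that $R$ is Noetherian), and that, since $C$ is finitely presented, applying $\Hom_R(-,E)$ to a finite presentation of $C$ yields a natural isomorphism $\Hom_R(\Hom_R(C,I),E) \cong C \otimes_R \Hom_R(I,E)$. In particular $\Hom_R(\Hom_R(C,I),E) \in \mathcal{F}_C(R)$ whenever $I$ is injective; this is exactly the identity that transports the defining condition of $\mathcal{M}^C_{\scp}$ into a statement about $\Tor$ against $\mathcal{I}_C(R)$.

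For the inclusion $\mathcal{M}^C_{\scp} \subseteq \mathcal{M}^C_{\scf}$, I would take $M \in \mathcal{M}^C_{\scp}$, fix an injective module $I$, and combine the symmetry of $\Tor$ with the duality isomorphism to get $\Hom_R(\Tor^R_i(\Hom_R(C,I),M),E) \cong \Ext^i_R(M,\Hom_R(\Hom_R(C,I),E)) \cong \Ext^i_R(M,C \otimes_R \Hom_R(I,E))$ for every $i \geq 1$. The right-hand side is $\Ext^i_R(M,-)$ evaluated at a module of $\mathcal{F}_C(R)$, hence it vanishes because $M \in \mathcal{M}^C_{\scp}$. Since $E$ is a cogenerator, this forces $\Tor^R_i(\Hom_R(C,I),M)=0$, and letting $I$ range over all injectives gives $M \in \mathcal{M}^C_{\scf}$.

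For the converse under the finiteness hypothesis, I would use a resolution $P_\bullet \to M$ by finitely generated free modules (available since $R$ is Noetherian and $M$ is finitely generated). The Hom-evaluation isomorphism for finitely generated projectives gives $\Hom_R(C,I) \otimes_R P_j \cong \Hom_R(\Hom_R(P_j,C),I)$, so the complex computing $\Tor^R_\bullet(\Hom_R(C,I),M)$ is isomorphic to $\Hom_R(\Hom_R(P_\bullet,C),I)$; as $\Hom_R(-,I)$ is exact for injective $I$, taking homology yields $\Tor^R_i(\Hom_R(C,I),M) \cong \Hom_R(\Ext^i_R(M,C),I)$ for all $i$. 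Assuming $M \in \mathcal{M}^C_{\scf}$ and specializing to $I=E$ then forces $\Ext^i_R(M,C)=0$ for all $i \geq 1$. Finally, for any flat $F$ the tensor-evaluation isomorphism (again using that $M$ is finitely generated) gives $\Ext^i_R(M,C \otimes_R F) \cong \Ext^i_R(M,C) \otimes_R F = 0$, so $M \in \mathcal{M}^C_{\scp}$.

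The step I expect to be the main obstacle, and the one I would set up most carefully, is the identity $\Hom_R(\Hom_R(C,I),E) \cong C \otimes_R \Hom_R(I,E)$ with $\Hom_R(I,E)$ flat. Both the flatness of the dual of an injective and the passage through a finite presentation of $C$ rely on the Noetherian hypothesis and on $C$ being finitely generated, and one must check that the natural maps match up, so that the cokernel of the dualized presentation is genuinely $C \otimes_R \Hom_R(I,E)$. Once this is in hand, the remainder is formal manipulation of the duality and evaluation isomorphisms.
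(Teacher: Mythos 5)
Your proof is correct and follows essentially the same route as the paper: both directions rest on the character-module duality between $\Ext$ and $\Tor$ over an injective cogenerator, together with the observations that $\Hom_R(\mathcal{I}_C,E)\subseteq\mathcal{F}_C$ and (using that $M$ is finitely generated) that duals of $\mathcal{F}_C$-modules land in $\mathcal{I}_C$. The only cosmetic difference is in the second half, where you first reduce to $\Ext^i_R(M,C)=0$ and then pass to $C\otimes_R F$ by tensor-evaluation, whereas the paper applies the duality isomorphism to each module of $\mathcal{F}_C$ directly; the underlying lemma and hypotheses are identical.
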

\begin{proof}
Let $E$ be an injective cogenerator and let $ M \in \mathcal{M}^C_{\scp} $. By \cite[Theorem 3.2.1]{EJ1}, there is an isomorphism \\
\centerline{$ \Hom_R(\Tor^R_i(\mathcal{I}_C , M) , E) \cong \Ext^i_R(M , \Hom_R(\mathcal{I}_C , E)) $,}
for all $ i \geq 1 $. Now use the fact that $ \Hom_R(\mathcal{I}_C , E) \subseteq \mathcal{F}_C $ to obtain $ M \in \mathcal{M}^C_{\scf} $. Next suppose that $ M \in \mathcal{M}^C_{\scf} $ is finitely generated. The isomorphism \cite[Theorem 3.2.13]{EJ1}, \\
\centerline{$ \Hom_R(\Ext^i_R(M , \mathcal{F}_C) , E) \cong \Tor^R_i( \Hom_R(\mathcal{F}_C , E) , M) $,}
 for all $ i \geq 1 $, together with the fact that $ \Hom_R(\mathcal{F}_C , E) \subseteq \mathcal{I}_C $ yield $ M \in \mathcal{M}^C_{\scp} $.	
\end{proof}

\begin{prop}
Let $ M \in \mathcal{A}_C(R) $. The following are equivalent:
	\begin{itemize}
		\item[(i)]{ $ M $ is $C$-copure injective.}
		\item[(ii)] {If $0 \rightarrow M \rightarrow X \overset{\alpha}\rightarrow L \rightarrow 0$ is an exact sequence, with $ X \in \mathcal{I}_C $, then $ \alpha $ is $ \mathcal{I}_C $-precover.}
		\item[(i)]{ $ M $ is a kernel of a surjective $ \mathcal{I}_C $-precover $ \alpha : N \rightarrow L $.}
	\end{itemize}
\end{prop}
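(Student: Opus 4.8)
The plan is to establish the cyclic chain (i) $\Rightarrow$ (ii) $\Rightarrow$ (iii) $\Rightarrow$ (i), writing (iii) for the third condition (that $M$ is the kernel of a surjective $\mathcal{I}_C$-precover). The single tool underlying every step is the long exact sequence obtained by applying $\Hom_R(Y,-)$, for an arbitrary $Y \in \mathcal{I}_C(R)$, to a short exact sequence $0 \to M \to X \overset{\alpha}\rightarrow L \to 0$ whose kernel is $M$. The recurring principle is that $\Hom_R(Y,\alpha)$ is surjective for every $Y \in \mathcal{I}_C(R)$ --- i.e. $\alpha$ is an $\mathcal{I}_C$-precover --- exactly when each connecting map $\Hom_R(Y,L) \to \Ext^1_R(Y,M)$ vanishes, and the latter is controlled by $\Ext^1_R(\mathcal{I}_C,M)$.

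For (i) $\Rightarrow$ (ii) I would take any exact sequence $0 \to M \to X \overset{\alpha}\rightarrow L \to 0$ with $X \in \mathcal{I}_C(R)$ and any $Y \in \mathcal{I}_C(R)$, and read off from the piece $\Hom_R(Y,X) \overset{\alpha_*}\rightarrow \Hom_R(Y,L) \to \Ext^1_R(Y,M)$ that $\alpha_*$ is surjective, since $C$-copure injectivity of $M$ forces $\Ext^1_R(Y,M)=0$. As $Y$ varies over $\mathcal{I}_C(R)$ this is precisely the statement that $\alpha$ is an $\mathcal{I}_C$-precover; note this direction does not even use $X \in \mathcal{I}_C$.

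For (ii) $\Rightarrow$ (iii) I would invoke the standing hypothesis $M \in \mathcal{A}_C(R)$. By Lemma 2.10(i) the class $\mathcal{I}_C(R)$ is preenveloping, so $M$ has an $\mathcal{I}_C$-preenvelope $M \to X$, and by Lemma 2.10(ii) this map is injective because $M \in \mathcal{A}_C(R)$. The resulting exact sequence $0 \to M \to X \overset{\alpha}\rightarrow L \to 0$ has $X \in \mathcal{I}_C(R)$, so by (ii) the surjection $\alpha$ is an $\mathcal{I}_C$-precover; thus $M = \ker\alpha$ exhibits (iii).

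The implication (iii) $\Rightarrow$ (i) carries the main difficulty. Starting from $0 \to M \to N \overset{\alpha}\rightarrow L \to 0$ with $N \in \mathcal{I}_C(R)$ and $\alpha$ a surjective $\mathcal{I}_C$-precover, the sequence $\Hom_R(Y,N) \overset{\alpha_*}\rightarrow \Hom_R(Y,L) \to \Ext^1_R(Y,M) \to \Ext^1_R(Y,N)$ shows, since $\alpha_*$ is surjective, that $\Ext^1_R(Y,M)$ embeds into $\Ext^1_R(Y,N)$. The crux is therefore to prove $\Ext^1_R(Y,N)=0$ for $Y,N \in \mathcal{I}_C(R)$, and I expect this to be the step requiring real input. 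I would prove the stronger vanishing $\Ext^i_R(\mathcal{A}_C(R),\mathcal{I}_C(R))=0$ for all $i \geq 1$: writing $N=\Hom_R(C,I)$ with $I$ injective and taking a projective resolution $P_\bullet$ of any $X \in \mathcal{A}_C(R)$, the Hom-tensor adjunction identifies $\Hom_R(P_\bullet,\Hom_R(C,I))$ with $\Hom_R(C \otimes_R P_\bullet, I)$, whose $i$-th cohomology is $\Hom_R(\Tor_i^R(C,X),I)$ as $I$ is injective; this is zero for $i \geq 1$ because $\Tor_i^R(C,X)=0$ for modules in $\mathcal{A}_C(R)$. Since every $C$-injective module has $C$-injective dimension zero and hence lies in $\mathcal{A}_C(R)$ (Definition 2.7), the case $X=Y$ gives $\Ext^1_R(Y,N)=0$, whence $\Ext^1_R(Y,M)=0$ for all $Y \in \mathcal{I}_C(R)$; that is, $M$ is $C$-copure injective.
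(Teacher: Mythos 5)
Your proof is correct and follows essentially the same route as the paper's: the same cyclic chain of implications, the same long exact sequences obtained by applying $\Hom_R(\mathcal{I}_C,-)$, and the same use of Lemma 2.10(ii) to produce an injective $\mathcal{I}_C$-preenvelope of $M$ in the step (ii) $\Rightarrow$ (iii). The only difference is that you explicitly justify the vanishing $\Ext^1_R(\mathcal{I}_C,\mathcal{I}_C)=0$ (via Hom-tensor adjunction and the inclusion $\mathcal{I}_C\subseteq\mathcal{A}_C(R)$), a step the paper asserts without comment in (iii) $\Rightarrow$ (i).
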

\begin{proof}
(i) $ \Longrightarrow $ (ii). Application of the functor $ \Hom_R(\mathcal{I}_C , -) $ on the exact sequence $0 \rightarrow M \rightarrow X \overset{\alpha}\rightarrow L \rightarrow 0$ yields an exact sequence \\
\centerline{$ \Hom_R(\mathcal{I}_C , X) \overset{\Hom_R(\mathcal{I}_C,\alpha)}\longrightarrow \Hom_R(\mathcal{I}_C, L) \longrightarrow \Ext^1_R(\mathcal{I}_C , M) = 0$.}
Hence $ \alpha $ is $ \mathcal{I}_C $-precover.

(ii) $ \Longrightarrow $ (iii). As $ M \in \mathcal{A}_C(R) $, by Lemma 2.10(ii), the $ \mathcal{I}_C $-precover of $M$ is injective. Thus there exists an exact sequence $0 \rightarrow M \rightarrow X \rightarrow L \rightarrow 0$, with $ X \in \mathcal{I}_C $. Now by (ii), $ X \rightarrow L $ is $ \mathcal{I}_C $-precover whose kernel is $M$.

(iii) $ \Longrightarrow $ (i). By assumption, there is an exact
 sequence $ 0 \rightarrow M \rightarrow X \overset{\beta}\rightarrow L \rightarrow 0 $ is which $ \beta $ is $ \mathcal{I}_C $-precover.
Application of the functor $ \Hom_R(\mathcal{I}_C , -) $ on this sequence yields an exact sequence \\
\centerline{$ \Hom_R(\mathcal{I}_C , X) \overset{\Hom_R(\mathcal{I}_C,\beta)}\longrightarrow \Hom_R(\mathcal{I}_C, L) \longrightarrow \Ext^1_R(\mathcal{I}_C , M) \longrightarrow \Ext^1_R(\mathcal{I}_C , X) = 0$.}
 But $ \Hom_R(\mathcal{I}_C,\beta) $ is surjective since
	$ \beta $ is $ \mathcal{I}_C $-precover. So that $ \Ext^1_R(\mathcal{I}_C , M) = 0$, whence $ M $ is $C$-copure injective.
\end{proof}

\begin{prop}
	Let $ M \in \mathcal{B}_C(R) $. The following are equivalent:
	\begin{itemize}
		\item[(i)]{ $ M $ is $C$-copure projective.}
		\item[(ii)] {If $0 \rightarrow L \overset{\alpha}\rightarrow Y \rightarrow M \rightarrow 0$ is an exact sequence, with $ Y \in \mathcal{P}_C $, then $ \alpha $ is $ \mathcal{P}_C $-preenvelope.}
		\item[(i)]{ $ M $ is a cokernel of an injective $ \mathcal{P}_C $-preenvelope $ \alpha : N \rightarrow L $.}
	\end{itemize}
\end{prop}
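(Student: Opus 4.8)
The plan is to dualize the proof of Proposition 4.7 under the duality interchanging $\mathcal{A}_C(R)$ with $\mathcal{B}_C(R)$: I replace the class $\mathcal{I}_C$ by $\mathcal{P}_C$, $\mathcal{I}_C$-precovers by $\mathcal{P}_C$-preenvelopes, and kernels of surjections by cokernels of injections. Concretely, wherever the proof of Proposition 4.7 applies the covariant functor $\Hom_R(\mathcal{I}_C, -)$, I instead apply the contravariant functor $\Hom_R(-, \mathcal{P}_C)$. I prove the cycle (i)$\Rightarrow$(ii)$\Rightarrow$(iii)$\Rightarrow$(i), reading ``$M$ is $C$-copure projective'' as $\Ext^1_R(M, \mathcal{P}_C) = 0$, in accordance with the $\mathcal{P}_C$-preenvelope language of (ii) and (iii).

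For (i)$\Rightarrow$(ii), fix $P' \in \mathcal{P}_C$ and apply $\Hom_R(-, P')$ to $0 \to L \overset{\alpha}\to Y \to M \to 0$, obtaining the exact sequence
\[
\Hom_R(Y, P') \overset{\Hom_R(\alpha, P')}\longrightarrow \Hom_R(L, P') \longrightarrow \Ext^1_R(M, P') = 0.
\]
Thus $\Hom_R(\alpha, P')$ is surjective for every $P' \in \mathcal{P}_C$, which is exactly the assertion that $\alpha$ is a $\mathcal{P}_C$-preenvelope. For (iii)$\Rightarrow$(i), begin with a short exact sequence $0 \to L \overset{\beta}\to Y \to M \to 0$ in which $Y \in \mathcal{P}_C$ and $\beta$ is an injective $\mathcal{P}_C$-preenvelope, and apply $\Hom_R(-, P')$ to get
\[
\Hom_R(Y, P') \overset{\Hom_R(\beta, P')}\longrightarrow \Hom_R(L, P') \longrightarrow \Ext^1_R(M, P') \longrightarrow \Ext^1_R(Y, P').
\]
Since $Y, P' \in \mathcal{P}_C$ and $\mathcal{P}_C$ is self-orthogonal, the last term vanishes; and since $\beta$ is a $\mathcal{P}_C$-preenvelope, $\Hom_R(\beta, P')$ is surjective. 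Exactness then forces $\Ext^1_R(M, P') = 0$ for all $P' \in \mathcal{P}_C$, so $M$ is $C$-copure projective.

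For (ii)$\Rightarrow$(iii), I would use Lemma 2.10: the class $\mathcal{P}_C$ is precovering, so $M$ admits a $\mathcal{P}_C$-precover $\pi : Y \to M$, and because $M \in \mathcal{B}_C(R)$ this precover is surjective. Putting $L = \ker \pi$ yields a short exact sequence $0 \to L \overset{\alpha}\to Y \overset{\pi}\to M \to 0$ with $Y \in \mathcal{P}_C$; by hypothesis (ii) the inclusion $\alpha$ is a $\mathcal{P}_C$-preenvelope, it is injective, and $\coker \alpha \cong M$. Hence $M$ is the cokernel of an injective $\mathcal{P}_C$-preenvelope, as required.

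Since the scheme is a formal dualization, the only genuine content lies in supplying the two ingredients dual to those of Proposition 4.7: that a $\mathcal{P}_C$-precover of a module in $\mathcal{B}_C(R)$ is surjective, which is Lemma 2.10, and that $\mathcal{P}_C$ is self-orthogonal, i.e.\ $\Ext^i_R(\mathcal{P}_C, \mathcal{P}_C) = 0$ for all $i \geq 1$. The latter is the step I would treat most carefully; it is the dual of the vanishing $\Ext^i_R(\mathcal{I}_C, \mathcal{I}_C) = 0$ used in Proposition 4.7, and it follows from $\Ext^i_R(C, C) = 0$ for $i \geq 1$ together with the fact that, over the Noetherian ring $R$ with $C$ finitely generated, $\Ext^i_R(C, -)$ commutes with the (co)products built into the modules of $\mathcal{P}_C$.
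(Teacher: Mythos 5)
Your proof is correct and matches the paper's approach exactly: the paper's entire proof of this proposition is the single line ``Is dual of the proof of Proposition 4.7,'' and your argument is precisely that dualization carried out in detail, with the two non-formal ingredients (surjectivity of the $\mathcal{P}_C$-precover of a module in $\mathcal{B}_C(R)$ via Lemma 2.10(ii), and the self-orthogonality $\Ext^{i\geq 1}_R(\mathcal{P}_C,\mathcal{P}_C)=0$) correctly identified and justified. Your reading of ``$C$-copure projective'' as $\Ext^1_R(M,\mathcal{P}_C)=0$ is the one that makes the stated equivalence true; note only that the paper's Definition 4.1 and Notation 4.2 formally define this class via $\mathcal{F}_C$ rather than $\mathcal{P}_C$, a discrepancy the paper's one-line proof does not address either.
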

\begin{proof}
	Is dual of the proof of Proposition 4.7.
\end{proof}
By now, we know that $ \mathcal{GI}_C \subseteq  \mathcal{M}^C_{\sci} $. In the following, we show that the equality needs an additional assumption.
\begin{prop}
	Assume that each element of $ \mathcal{M}^C_{\emph{\sci}} $ has an epic $ \mathcal{I}_C $-cover. The following statements hold true:
	\begin{itemize}
		\item[(i)]{ One has $ \mathcal{M}^C_{\emph{\sci}} = \mathcal{GI}_C $.}
		\item[(ii)] {Assume further that $R$ has a dualizing module $D$. Then $ \mathcal{M}^C_{\emph{\sci}} \subseteq \mathcal{B}_{C^{\dagger}}(R) $, where $ C^{\dagger} = \emph{\Hom}_R(C,D) $. }
	\end{itemize}	
\end{prop}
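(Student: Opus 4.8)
The plan is to prove (i) by manufacturing a complete $\mathcal{I}_C\mathcal{I}$-coresolution out of the hypothesized covers, and then to deduce (ii) from (i) by checking the three defining conditions of the Bass class $\mathcal{B}_{C^{\dagger}}(R)$ directly, the dualizing hypothesis entering only through $\id_R(D)<\infty$. For (i), the containment $\mathcal{GI}_C\subseteq\mathcal{M}^C_{\sci}$ is already recorded, so I would prove only the reverse inclusion. Fix $M\in\mathcal{M}^C_{\sci}$ and let $\varphi\colon X_0\to M$ be an epic $\mathcal{I}_C$-cover with kernel $K$. Since $X_0\in\mathcal{I}_C\subseteq\mathcal{GI}_C\subseteq\mathcal{M}^C_{\sci}$ and a cover is in particular an $\mathcal{I}_C$-precover, applying $\Hom_R(J,-)$ for $J\in\mathcal{I}_C$ to $0\to K\to X_0\to M\to0$ shows, via the long exact sequence and surjectivity of $\Hom_R(J,\varphi)$, that $\Ext^{\geq1}_R(J,K)=0$; hence $K\in\mathcal{M}^C_{\sci}$ and the sequence is $\Hom_R(\mathcal{I}_C,-)$-exact. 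Iterating the hypothesis on $K$ produces a $\Hom_R(\mathcal{I}_C,-)$-exact left resolution $\cdots\to X_1\to X_0\to M\to0$ with all $X_i\in\mathcal{I}_C$. On the other side, any injective coresolution $0\to M\to I^0\to I^1\to\cdots$ is automatically $\Hom_R(\mathcal{I}_C,-)$-exact, since each cosyzygy is again strongly $C$-copure injective and $\Ext^{\geq1}_R(\mathcal{I}_C,-)$ kills such modules. Splicing the two halves gives a complete $\mathcal{I}_C\mathcal{I}$-coresolution with $M$ as the relevant cokernel, so $M\in\mathcal{GI}_C$.

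For (ii), by (i) it suffices to prove $\mathcal{GI}_C\subseteq\mathcal{B}_{C^{\dagger}}(R)$; recall that $C^{\dagger}$ is again semidualizing and that the existence of a dualizing module forces $\dim R=d<\infty$. The first step is $\mathcal{I}_C\subseteq\mathcal{B}_{C^{\dagger}}(R)$. For injective $I$ one has $I\cong D\otimes_R\Hom_R(D,I)$ because $I\in\mathcal{B}_D(R)$, and since $C$ is finitely presented and $\Hom_R(D,I)$ is flat, the functor $\Hom_R(C,-)$ commutes with $-\otimes_R\Hom_R(D,I)$, giving $\Hom_R(C,I)\cong C^{\dagger}\otimes_R\Hom_R(D,I)$. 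As $\Hom_R(D,I)$ is flat its projective dimension is at most $d$, so Lemma 2.5(ii) yields $C^{\dagger}$-$\pd_R(\Hom_R(C,I))\leq d$, whence $\Hom_R(C,I)\in\mathcal{B}_{C^{\dagger}}(R)$.

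Now take $M\in\mathcal{GI}_C=\mathcal{M}^C_{\sci}$ together with the left half $\cdots\to X_1\to X_0\to M\to0$ of a complete coresolution, so that each $X_i\cong C^{\dagger}\otimes_R F_i$ with $F_i:=\Hom_R(D,I_i)$ flat and every syzygy in $\mathcal{M}^C_{\sci}$. The crucial auxiliary fact is that $\Ext^{\geq1}_R(C^{\dagger},N)=0$ for all $N\in\mathcal{M}^C_{\sci}$: applying $\Hom_R(C,-)$ to a finite injective resolution of $D$ and using $\Ext^{\geq1}_R(C,D)=0$ produces a finite $\mathcal{I}_C$-coresolution $0\to C^{\dagger}\to\Hom_R(C,J^0)\to\cdots\to\Hom_R(C,J^t)\to0$, and dimension shifting it against $N$ collapses $\Ext^{\geq1}_R(C^{\dagger},N)$ onto $\Ext^{\geq1}_R(\mathcal{I}_C,N)=0$. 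It follows that $X_\bullet\to M$ is $\Hom_R(C^{\dagger},-)$-exact, and since $\Hom_R(C^{\dagger},C^{\dagger}\otimes_R F_i)\cong F_i$ (the flat module $F_i$ lies in $\mathcal{A}_{C^{\dagger}}(R)$, having finite projective dimension) this gives a genuine flat resolution $\cdots\to F_1\to F_0\to\Hom_R(C^{\dagger},M)\to0$. Applying $C^{\dagger}\otimes_R-$ and invoking the counit isomorphisms $C^{\dagger}\otimes_R F_i\cong X_i$ (valid as $X_i\in\mathcal{B}_{C^{\dagger}}(R)$) identifies $C^{\dagger}\otimes_R F_\bullet$ naturally with the exact complex $X_\bullet$, so that $\Tor^R_{\geq1}(C^{\dagger},\Hom_R(C^{\dagger},M))=0$ and $C^{\dagger}\otimes_R\Hom_R(C^{\dagger},M)\cong M$. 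Together with the $\Ext$-vanishing these are exactly the conditions defining $\mathcal{B}_{C^{\dagger}}(R)$, so $M\in\mathcal{B}_{C^{\dagger}}(R)$.

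The main obstacle throughout (ii) is that the coresolution of $M$ is infinite, so a naive two-out-of-three induction over $\mathcal{B}_{C^{\dagger}}(R)$ cannot terminate; I sidestep this by converting the $\Hom_R(C^{\dagger},-)$-exactness of $X_\bullet$ into an honest flat resolution of $\Hom_R(C^{\dagger},M)$ and applying the counit isomorphism term by term, so that no finiteness of $X_\bullet$ is ever needed. The single place where the dualizing assumption is indispensable is the vanishing $\Ext^{\geq1}_R(C^{\dagger},N)=0$, which relies on $\id_R(D)<\infty$ to furnish the finite $\mathcal{I}_C$-coresolution of $C^{\dagger}$.
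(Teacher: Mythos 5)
Your proof of part (i) is correct and takes essentially the same route as the paper: iterate the epic $\mathcal{I}_C$-cover to build the left half of a complete $\mathcal{I}_C\mathcal{I}$-coresolution, check that each kernel stays in $\mathcal{M}^C_{\sci}$, and splice with an injective coresolution of $M$. The only cosmetic difference is that the paper gets $\Ext^1_R(\mathcal{I}_C,K)=0$ from Wakamatsu's lemma, whereas you get it from the long exact sequence together with the surjectivity of $\Hom_R(J,\varphi)$ and $\Ext^1_R(J,X_0)=0$; both are fine. For part (ii) the paper simply cites \cite[Theorem 4.6]{HJ}, so your self-contained verification is a genuinely different route --- but it contains a real error.

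The error is the assertion that $\Hom_R(D,I)$ is flat for $I$ injective. This is false in general: already for $R$ a discrete valuation ring (so $D=R$ is dualizing) and $I=E(R/\fm)$, the module $\Hom_R(D,I)=E(R/\fm)$ is a nonzero torsion module over a domain and hence not flat; more generally, for a complete Cohen--Macaulay local ring of positive dimension one has $\Hom_R(D,E(R/\fm))\cong H^{d}_{\fm}(R)$, which is $\fm$-torsion. This false claim carries the weight of your argument in two distinct places: first, it is what gives $C^{\dagger}$-$\pd_R(\Hom_R(C,I))\leq d$ and hence $\mathcal{I}_C\subseteq\mathcal{B}_{C^{\dagger}}(R)$; second, it is what makes $F_{\bullet}=\Hom_R(C^{\dagger},X_{\bullet})$ a \emph{flat} resolution of $\Hom_R(C^{\dagger},M)$, which is exactly what you need in order to read off $\Tor^R_{\geq 1}(C^{\dagger},\Hom_R(C^{\dagger},M))$ from the exactness of $X_{\bullet}$. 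The intermediate conclusions are true and the argument is repairable: for the first point one should instead invoke $\mathcal{I}_C\subseteq\mathcal{A}_C(R)$ together with the identification $\mathcal{A}_C(R)=\mathcal{B}_{C^{\dagger}}(R)$ valid over a ring with dualizing module; for the second, the modules $\Hom_R(C^{\dagger},X_i)$, though not flat, satisfy $\Tor^R_{\geq 1}(C^{\dagger},\Hom_R(C^{\dagger},X_i))=0$ precisely because $X_i\in\mathcal{B}_{C^{\dagger}}(R)$, and a left resolution by modules acyclic for $C^{\dagger}\otimes_R-$ still computes the Tor's. As written, however, the proof of (ii) has a genuine gap. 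Your auxiliary vanishing $\Ext^{\geq 1}_R(C^{\dagger},N)=0$ for $N\in\mathcal{M}^C_{\sci}$, obtained from the finite $\mathcal{I}_C$-coresolution of $C^{\dagger}$ coming from a finite injective resolution of $D$, is correct and is the one ingredient of (ii) that stands as stated.
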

\begin{proof}
	(i). The inclusion $ \mathcal{GI}_C \subseteq \mathcal{M}^C_{\sci} $ is from the definition. Assume that $ M \in \mathcal{M}^C_{\sci} $. Then $ \Ext^i_R(\mathcal{I}_C,M) = 0 $ for all $ i \geq 1 $. Hence we need only to show that
	$ M $ has a left $ \mathcal{I}_C $-resolution that is exact and $ \Hom_R(\mathcal{I}_C , -) $-exact. By the assumption,
	there exists an exact sequence \\
	\centerline{$ 0 \rightarrow K \rightarrow E \rightarrow M \rightarrow 0$,}	
	in which $ E \in \mathcal{I}_C $ and $ K = \ker (E \rightarrow M) $. Application of the functor $ \Hom_R(\mathcal{I}_C , -) $ on this sequence yields $ \Ext^i_R(\mathcal{I}_C,K) = 0 $ for all $ i \geq 2 $. On the other hand, the Wakamatsu's lemma \cite[Corollary 7.2.3]{EJ1} implies that
	$ \Ext^1_R(\mathcal{I}_C,K) = 0 $. Hence $ K \in \mathcal{M}^C_{\sci} $. Therefore we can repeat this argument to give an exact left
	 $ \mathcal{I}_C $-resolution for $ M $ which is $ \Hom_R(\mathcal{I}_C , -) $-exact, whence $ M \in \mathcal{GI}_C $.
	
	(ii). Follows from (i) and \cite[Theorem 4.6]{HJ}.
\end{proof}

Recall from \cite{EJ5} that an $R$-module $M$ is said to be $h$-\textit{divisible}, precisely when $M$ is homomorphic image of an injective $R$-module.
\begin{cor}
	Assume that each element of $ \mathcal{M}_{\emph{\sci}} $ is $h$-divisible. Then $ \mathcal{M}_{\emph{\sci}} = \mathcal{GI} $. Moreover if $C$ is dualizing, then $ \mathcal{M}_{\emph{\sci}} \subseteq \mathcal{B}_C (R) $.
\end{cor}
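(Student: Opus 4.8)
The plan is to recognize this corollary as nothing more than the special case $C = R$ of Proposition 4.10, combined with a translation of hypotheses. The module $R$ is itself semidualizing (it is projective of rank one), so Proposition 4.10 applies with the semidualizing module taken to be $R$. In that case $\mathcal{I}_R = \mathcal{I}$ is the class of injective modules, and hence $\mathcal{M}^R_{\sci} = \mathcal{M}_{\sci}$ and $\mathcal{GI}_R = \mathcal{GI}$. Thus everything reduces to checking that the standing hypothesis of Proposition 4.10, namely that every module in $\mathcal{M}_{\sci}$ admits an \emph{epic} $\mathcal{I}$-cover, follows from the assumption of the corollary that every module in $\mathcal{M}_{\sci}$ is $h$-divisible.

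First I would invoke Lemma 2.10(i) with $C = R$, which guarantees that the class $\mathcal{I}$ of injective modules is covering; so each $M \in \mathcal{M}_{\sci}$ has an injective cover $\varphi : I \to M$. The only real step is to argue that this cover is surjective once $M$ is $h$-divisible. Indeed, $h$-divisibility supplies a surjection $p : E \twoheadrightarrow M$ with $E$ injective; since $\varphi$ is in particular an $\mathcal{I}$-precover and $E \in \mathcal{I}$, the homomorphism $p$ factors as $p = \varphi h$ for some $h : E \to I$, and the surjectivity of $p$ forces $\varphi$ to be surjective. Therefore every element of $\mathcal{M}_{\sci}$ has an epic $\mathcal{I}$-cover, and Proposition 4.10(i) immediately yields $\mathcal{M}_{\sci} = \mathcal{GI}$. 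For the second assertion I would assume $C$ dualizing and apply Proposition 4.10(ii) with the semidualizing module equal to $R$ and with the dualizing module $D = C$. In the notation there, $R^{\dagger} = \Hom_R(R, C) \cong C$, so the conclusion $\mathcal{M}^R_{\sci} \subseteq \mathcal{B}_{R^{\dagger}}(R)$ reads exactly $\mathcal{M}_{\sci} \subseteq \mathcal{B}_C(R)$.

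The deduction from Proposition 4.10 is formal, so the only genuine content is the passage between $h$-divisibility and the existence of an epic injective cover; this is where I expect to spend a careful sentence, though it is routine once injective covers are known to exist. I would also remark in passing that the reverse implication holds trivially—an epic injective cover exhibits $M$ as a homomorphic image of an injective module, hence $h$-divisible—so the two hypotheses in fact coincide, even though only one direction is needed here. No further obstacle is anticipated.
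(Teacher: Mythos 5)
Your proposal is correct and is exactly the deduction the paper intends: the corollary is stated without proof as the case $C=R$ of Proposition 4.9, and your bridge from $h$-divisibility to an epic injective cover (covers exist by Lemma 2.10(i); a precover through which a surjection from an injective factors must itself be surjective) is the only detail that needed to be supplied, and you supply it correctly. The identification $R^{\dagger}=\Hom_R(R,C)\cong C$ for the second assertion is likewise the intended reading.
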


\begin{prop}
The following statements hold true:
	\begin{itemize}
		\item[(i)]{ Assume that $ M \in \mathcal{B}_C (R) $. One has $ M \in \mathcal{M}^C_{\emph{\scp}} $ if and only if $ \emph{\Hom}_R(C,M) \in \mathcal{M}_{\emph{\scp}} $.}
		\item[(ii)]{ Assume that $ M \in \mathcal{A}_C (R) $. One has $ M \in \mathcal{M}_{\emph{\scp}} $ if and only if $ C \otimes_R M \in \mathcal{M}^C_{\emph{\scp}} $.}
		\item[(iii)]{ Assume that $ M \in \mathcal{B}_C (R) $. One has $ M \in \mathcal{M}^C_{\emph{\scf}} $ if and only if $ \emph{\Hom}_R(C,M) \in \mathcal{M}_{\emph{\scf}} $.}
		\item[(iv)] {Assume that $ M \in \mathcal{A}_C (R) $. One has $ M \in \mathcal{M}_{\emph{\scf}} $ if and only if $ C \otimes_R M \in \mathcal{M}^C_{\emph{\scf}} $.}
		\item[(v)] {Assume that $ M \in \mathcal{B}_C (R) $. One has $ M \in \mathcal{M}_{\emph{\sci}} $ if and only if $ \emph{\Hom}_R(C,M) \in \mathcal{M}^C_{\emph{\sci}} $.}	
		\item[(vi)] {Assume that $ M \in \mathcal{A}_C (R) $. One has $ M \in \mathcal{M}^C_{\emph{\sci}} $ if and only if $ C \otimes_R M \in \mathcal{M}_{\emph{\sci}} $.}
		\end{itemize}	
\end{prop}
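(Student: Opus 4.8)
The plan is to reduce all six equivalences to two \emph{Foxby-compatibility} isomorphisms and then substitute the right modules into each. Writing $\mathcal{F}_C(R)=\{C\otimes_R F\mid F \text{ flat}\}$ and $\mathcal{I}_C(R)=\{\Hom_R(C,I)\mid I \text{ injective}\}$, the engine I would set up consists of the following two natural isomorphisms. First, for all $N,L\in\mathcal{A}_C(R)$ and all $i\geq 0$,
\[ \Ext^i_R(N,L)\cong\Ext^i_R(C\otimes_R N,\,C\otimes_R L), \qquad (\mathrm{A}) \]
and second, for every $L\in\mathcal{A}_C(R)$, every injective $I$ and all $i\geq 0$,
\[ \Tor^R_i(\Hom_R(C,I),\,C\otimes_R L)\cong\Tor^R_i(I,L). \qquad (\mathrm{B}) \]
Granting $(\mathrm{A})$ and $(\mathrm{B})$, I would use the standard Foxby facts that $C\otimes_R-$ and $\Hom_R(C,-)$ are quasi-inverse between $\mathcal{A}_C(R)$ and $\mathcal{B}_C(R)$, that flat modules lie in $\mathcal{A}_C(R)$, injectives in $\mathcal{B}_C(R)$, and $\mathcal{I}_C(R)\subseteq\mathcal{A}_C(R)$, together with the unit/counit isomorphisms $C\otimes_R\Hom_R(C,X)\cong X$ for $X\in\mathcal{B}_C(R)$ and $\Hom_R(C,C\otimes_R Y)\cong Y$ for $Y\in\mathcal{A}_C(R)$.

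The deployment is then pure substitution, noting that $C\otimes_R F$ runs over $\mathcal{F}_C(R)$ as $F$ runs over the flats, and $\Hom_R(C,I)$ runs over $\mathcal{I}_C(R)$ as $I$ runs over the injectives. For (ii) I apply $(\mathrm{A})$ with $N=M\in\mathcal{A}_C(R)$ and $L=F$ flat; for (i), with $M\in\mathcal{B}_C(R)$, I put $N=\Hom_R(C,M)\in\mathcal{A}_C(R)$ (so $C\otimes_R N\cong M$) and $L=F$ flat, getting $\Ext^i_R(\Hom_R(C,M),F)\cong\Ext^i_R(M,C\otimes_R F)$. For (vi) I apply $(\mathrm{A})$ with $N=\Hom_R(C,I)$ and $L=M\in\mathcal{A}_C(R)$, using $C\otimes_R\Hom_R(C,I)\cong I$; for (v), with $M\in\mathcal{B}_C(R)$, I take $N=\Hom_R(C,I)$ and $L=\Hom_R(C,M)$, both in $\mathcal{A}_C(R)$, so $(\mathrm{A})$ reads $\Ext^i_R(\Hom_R(C,I),\Hom_R(C,M))\cong\Ext^i_R(I,M)$. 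For (iv) I apply $(\mathrm{B})$ with $L=M\in\mathcal{A}_C(R)$; for (iii), with $M\in\mathcal{B}_C(R)$, I set $L=\Hom_R(C,M)\in\mathcal{A}_C(R)$ so that $(\mathrm{B})$ gives $\Tor^R_i(\Hom_R(C,I),M)\cong\Tor^R_i(I,\Hom_R(C,M))$. In every case the vanishing for all $i\geq 1$ on one side is literally equivalent to the membership condition on the other.

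To prove $(\mathrm{A})$ I would take a projective resolution $P_\bullet\to N$. Since $N\in\mathcal{A}_C(R)$ gives $\Tor^R_{>0}(C,N)=0$, the complex $C\otimes_R P_\bullet\to C\otimes_R N$ is a resolution with terms in $\mathcal{P}_C(R)$. For $L\in\mathcal{A}_C(R)$ the Auslander condition yields $\Ext^{>0}_R(C,C\otimes_R L)=0$, hence $\Ext^{>0}_R(C\otimes_R P_j,C\otimes_R L)=0$ because each $C\otimes_R P_j$ is a direct summand of some $C^{(\lambda)}$; thus $C\otimes_R P_\bullet$ computes $\Ext^{*}_R(C\otimes_R N,C\otimes_R L)$. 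Hom-tensor adjunction then identifies $\Hom_R(C\otimes_R P_j,C\otimes_R L)\cong\Hom_R(P_j,\Hom_R(C,C\otimes_R L))\cong\Hom_R(P_j,L)$, the last step using $L\in\mathcal{A}_C(R)$, so the relevant complex is $\Hom_R(P_\bullet,L)$, whose cohomology is $\Ext^{*}_R(N,L)$. For $(\mathrm{B})$ I would take a flat resolution $F_\bullet\to L$; since $L\in\mathcal{A}_C(R)$, $C\otimes_R F_\bullet\to C\otimes_R L$ is exact, and because $\Hom_R(C,I)\in\mathcal{A}_C(R)$ we get $\Tor^R_{>0}(C,\Hom_R(C,I))=0$, whence flatness of $F_j$ gives $\Tor^R_i(\Hom_R(C,I),C\otimes_R F_j)\cong\Tor^R_i(\Hom_R(C,I),C)\otimes_R F_j=0$ for $i>0$. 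So $C\otimes_R F_\bullet$ computes $\Tor^R_{*}(\Hom_R(C,I),C\otimes_R L)$, and the evaluation isomorphism $\Hom_R(C,I)\otimes_R C\cong I$ (valid as $I\in\mathcal{B}_C(R)$) turns the computing complex into $I\otimes_R F_\bullet$, with homology $\Tor^R_{*}(I,L)$.

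The main obstacle I anticipate is precisely the acyclicity bookkeeping in $(\mathrm{A})$ and $(\mathrm{B})$: verifying that $C\otimes_R P_\bullet$ (respectively $C\otimes_R F_\bullet$) is a legitimate resolution by objects acyclic for the relevant derived functor, which hinges on the vanishing $\Ext^{>0}_R(C,C\otimes_R L)=0$ and $\Tor^R_{>0}(C,\Hom_R(C,I))=0$ extracted from the Auslander-class hypotheses, and on keeping the unit/counit and evaluation isomorphisms natural across the whole complex. Once those two isomorphisms are in place, the six parts are routine, each being a single choice of the modules $N$ and $L$.
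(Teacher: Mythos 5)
Your argument is correct, and for each part it arrives at the same end-to-end isomorphism as the paper does --- e.g.\ for (i) both proofs ultimately identify $\Ext^i_R(M,C\otimes_R F)$ with $\Ext^i_R(\Hom_R(C,M),F)$ via the counit $C\otimes_R\Hom_R(C,M)\cong M$ --- but the route is genuinely different. The paper writes out only (i) and proves it by citation: after inserting the counit it passes through the relative cohomology of Takahashi and White, invoking their Theorem 4.1 and Corollary 4.2 to compare relative and absolute $\Ext$ over the Auslander class (the Tor parts, dismissed as ``similar,'' would analogously lean on the relative Tor of Salimi, Tavasoli, Sather-Wagstaff and Yassemi). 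You instead prove the two comparison isomorphisms $(\mathrm{A})$ and $(\mathrm{B})$ from scratch, exhibiting $C\otimes_R P_\bullet$ (resp.\ $C\otimes_R F_\bullet$) as a resolution by objects acyclic for $\Hom_R(-,C\otimes_R L)$ (resp.\ for $\Tor^R(\Hom_R(C,I),-)$) and then transporting the computing complex across adjunction and the unit/counit. This amounts to a self-contained re-proof of exactly the special cases of the cited relative-cohomology results that are needed; what it buys is independence from that machinery and a uniform derivation of all six parts from two lemmas, at the cost of doing the acyclicity and naturality bookkeeping yourself --- which you do correctly: the inputs $\Tor^R_{>0}(C,N)=0$ and $\Ext^{>0}_R(C,C\otimes_R L)=0$ for $N,L\in\mathcal{A}_C(R)$, the fact that $\Ext$ converts coproducts in the first variable into products (so the vanishing passes to $C\otimes_R P_j$ as a summand of some $C^{(\Lambda)}$), the flatness trick $\Tor^R_i(X,C\otimes_R F_j)\cong\Tor^R_i(X,C)\otimes_R F_j$, and all six substitutions (including the memberships $\Hom_R(C,M)\in\mathcal{A}_C(R)$ for $M\in\mathcal{B}_C(R)$ and $\mathcal{I}_C\subseteq\mathcal{A}_C(R)$, $\mathcal{I}\subseteq\mathcal{B}_C(R)$) are all legitimate.
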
	
\begin{proof} We only prove (i). The other statements are similar. Let $ F $ be a flat $R$-module. One has the isomorphisms
	 \[\begin{array}{rl}
	 \Ext^i_R(M, C \otimes_R F) &\cong \Ext^i_R(C \otimes_R \Hom_R(C,M), C \otimes_R F)\\
	 &\cong \Ext^i_{\mathcal{I}_C}(\Hom_R(C,M), F)\\
	 &\cong \Ext^i_R(\Hom_R(C,M), F)\\
	 \end{array}\]	
	 in which the first isomorphism holds because $ M \in \mathcal{B}_C(R) $, the second isomorphism is from \cite[Theorem 4.1]{TW}, and the last isomorphism is from \cite[Corollary 4.2(b)]{TW} since $ \Hom_R(C,M) \in \mathcal{A}_C(R) $ by \cite[Theorem 2.8(a)]{TW}. It follows that $ M \in \mathcal{M}^C_{\scp} $ if and only if $ \Hom_R(C,M) \in \mathcal{M}_{\scp} $, as wanted.
\end{proof}
\begin{cor}
The following statements hold true:
\begin{itemize}
	\item[(i)]{Let $ \mathcal{M}_{\emph{\scp}} \subseteq \mathcal{A}_C (R) $. There is an equivalence of categories
				 $ \mathcal{M}_{\emph{\scp}} \rightleftarrows \mathcal{M}^C_{\emph{\scp}} $ by the functors $ C \otimes_R - : \mathcal{M}_{\emph{\scp}}\rightarrow \mathcal{M}^C_{\emph{\scp}} $ and $ \emph{\Hom}_R(C,-) : \mathcal{M}^C_{\emph{\scp}} \rightarrow \mathcal{M}_{\emph{\scp}} $.}
	\item[(ii)]{Let $ \mathcal{M}_{\emph{\scf}} \subseteq \mathcal{A}_C (R) $. There is an equivalence of categories $ \mathcal{M}_{\emph{\scf}} \rightleftarrows \mathcal{M}^C_{\emph{\scf}} $ by the functors $ C \otimes_R - : \mathcal{M}_{\emph{\scf}}\rightarrow \mathcal{M}^C_{\emph{\scf}} $ and $ \emph{\Hom}_R(C,-) : \mathcal{M}^C_{\emph{\scf}} \rightarrow \mathcal{M}_{\emph{\scf}} $.}
	\item[(iii)]{Let $ \mathcal{M}_{\emph{\sci}} \subseteq \mathcal{B}_C (R) $. There is an equivalence of categories $ \mathcal{M}_{\emph{\sci}} \rightleftarrows \mathcal{M}^C_{\emph{\sci}} $ by the functors $ \emph{\Hom}_R(C,-) : \mathcal{M}_{\emph{\sci}}\rightarrow \mathcal{M}^C_{\emph{\sci}} $ and $ C \otimes_R - : \mathcal{M}^C_{\emph{\sci}} \rightarrow \mathcal{M}_{\emph{\sci}} $.}
\end{itemize}
\end{cor}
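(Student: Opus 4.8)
All three parts follow the same template, so I would write out (i) in full and indicate the (routine) modifications for (ii) and (iii). The backbone is the \emph{Foxby equivalence}: the pair $(C\otimes_R-,\ \Hom_R(C,-))$ restricts to quasi-inverse equivalences $\mathcal{A}_C(R)\rightleftarrows\mathcal{B}_C(R)$, the unit $M\to\Hom_R(C,C\otimes_R M)$ and the counit $C\otimes_R\Hom_R(C,N)\to N$ being exactly the natural isomorphisms recorded in Definition 2.7 for $M\in\mathcal{A}_C(R)$ and $N\in\mathcal{B}_C(R)$. Consequently, to produce the asserted categorical equivalence I only have to check that each of the two functors carries the subcategory named as its source into the subcategory named as its target; the required unit and counit isomorphisms are then obtained for free by restriction, with no extra computation.

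For the forward functor in (i): given $M\in\mathcal{M}_{\scp}$, the hypothesis $\mathcal{M}_{\scp}\subseteq\mathcal{A}_C(R)$ puts $M$ in $\mathcal{A}_C(R)$, whence $C\otimes_R M\in\mathcal{B}_C(R)$ by Foxby equivalence and $C\otimes_R M\in\mathcal{M}^C_{\scp}$ by Proposition 4.13(ii), while the unit gives $\Hom_R(C,C\otimes_R M)\cong M$. For the backward functor, given $N\in\mathcal{M}^C_{\scp}$ I would invoke Proposition 4.13(i) to get $\Hom_R(C,N)\in\mathcal{M}_{\scp}$, after which the counit gives $C\otimes_R\Hom_R(C,N)\cong N$. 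In (ii) one uses parts (iv) and (iii) of Proposition 4.13 in place of (ii) and (i); in (iii), where the two functors exchange roles, one uses parts (v) and (vi) together with the hypothesis $\mathcal{M}_{\sci}\subseteq\mathcal{B}_C(R)$.

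The delicate point, and what I expect to be the main obstacle, is that the backward functor relies on Proposition 4.13(i), which is only available for modules already known to lie in $\mathcal{B}_C(R)$. Read literally, the previous paragraph therefore only yields an equivalence $\mathcal{M}_{\scp}\rightleftarrows\mathcal{M}^C_{\scp}\cap\mathcal{B}_C(R)$, and to reach $\mathcal{M}^C_{\scp}$ on the nose I must show that the hypothesis $\mathcal{M}_{\scp}\subseteq\mathcal{A}_C(R)$ already forces $\mathcal{M}^C_{\scp}\subseteq\mathcal{B}_C(R)$ (and, symmetrically, $\mathcal{M}^C_{\scf}\subseteq\mathcal{B}_C(R)$ for (ii) and $\mathcal{M}^C_{\sci}\subseteq\mathcal{A}_C(R)$ for (iii)). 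I would approach this through the Foxby bijection between the objects of $\mathcal{A}_C(R)$ and $\mathcal{B}_C(R)$, combined with the two-out-of-three closure of these classes from Definition 2.7 and the resolving/coresolving stability of the $\mathcal{M}^C_{(-)}$ recorded just after Notation 4.2, feeding a module $N\in\mathcal{M}^C_{\scp}$ into a $\mathcal{P}_C$-resolution whose terms already lie in $\mathcal{B}_C(R)$ and propagating membership back to $N$. Establishing this containment from the stated hypothesis alone is precisely the technical heart of the corollary; should it require more than the stated hypothesis, the correct reading of the statement is the equivalence onto $\mathcal{M}^C_{\scp}\cap\mathcal{B}_C(R)$, which the second paragraph already proves.
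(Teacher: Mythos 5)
Your argument is in substance the paper's own: the printed proof is the single line ``Just use Proposition 4.11 and [TW, Theorem 2.8]'', i.e.\ precisely the combination of the six transfer statements of the preceding proposition (your ``Proposition 4.13'') with the Foxby equivalence that you describe, and your treatment of the forward functor is correct and complete. The ``delicate point'' you isolate is therefore not an artifact of your write-up but a genuine gap that the paper's one-line proof leaves equally unaddressed: Proposition 4.11(i) converts $N\in\mathcal{M}^C_{\scp}$ into $\Hom_R(C,N)\in\mathcal{M}_{\scp}$ only when $N$ is already known to lie in $\mathcal{B}_C(R)$, and the hypothesis $\mathcal{M}_{\scp}\subseteq\mathcal{A}_C(R)$ gives no control over that.

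The repair you sketch, however, cannot succeed. It is circular at the first step: by Lemma 2.10(ii) a $\mathcal{P}_C$-precover of $N$ is guaranteed to be surjective only when $N\in\mathcal{B}_C(R)$, so the exact $\mathcal{P}_C$-resolution you want to propagate along is not available before the conclusion is known; and even granting such a resolution, two-out-of-three passes $\mathcal{B}_C(R)$-membership from two terms of a short exact sequence to the third, which does not let you descend along an infinite resolution to $N$. Worse, the desired containment is false in general: every projective module lies in $\mathcal{M}^C_{\scp}$, while over a local ring $R\in\mathcal{B}_C(R)$ forces the evaluation map $C\otimes_R\Hom_R(C,R)\rightarrow R$ to be surjective and hence $C\cong R$; the analogous remarks apply to $\mathcal{M}^C_{\scf}$ versus $\mathcal{B}_C(R)$ and, with injectives in place of projectives, to $\mathcal{M}^C_{\sci}$ versus $\mathcal{A}_C(R)$. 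Your fallback is therefore the honest statement: what Proposition 4.11 together with the Foxby equivalence actually yields under the stated hypotheses is an equivalence $\mathcal{M}_{\scp}\rightleftarrows\mathcal{M}^C_{\scp}\cap\mathcal{B}_C(R)$ (and its analogues in (ii) and (iii)), unless the symmetric containment is added as a hypothesis. Your second paragraph already proves that weaker assertion, which is as much as the paper's own proof establishes.
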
  
\begin{proof}
Just use Proposition 4.11 and \cite[Theorem 2.8]{TW}.
\end{proof}
\begin{thm}
Let $M$ be an $R$-module. The following statements hold true:
\begin{itemize}
	\item[(i)]{One has $ M \in \mathcal{I}_C $ if and only if $ M \in \mathcal{M}^C_{\emph{\sci}}$ and $ C \emph{-\id}_R(M) < \infty$.}
	\item[(ii)]{One has $ M \in \mathcal{P}_C $ if and only if $ M \in \mathcal{M}^C_{\emph{\scp}}$ and $ C \emph{-\pd}_R(M) < \infty$.}
\end{itemize}	
\end{thm}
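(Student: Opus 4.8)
The plan is to dispatch both forward implications at once and then concentrate on the two converses, which I would reduce to the classical case $C=R$ through Foxby equivalence. The forward implications are immediate: if $M\in\mathcal{I}_C$ then $C$-$\id_R(M)=0$ by Definition 2.1, and the chain $\mathcal{I}_C\subseteq\mathcal{GI}_C\subseteq\mathcal{M}^C_{\sci}$ (the first containment is part of Definition 2.4, the second was recorded just after Notation 4.2) gives $M\in\mathcal{M}^C_{\sci}$; dually $M\in\mathcal{P}_C$ yields $C$-$\pd_R(M)=0$ together with $\mathcal{P}_C\subseteq\mathcal{GP}_C\subseteq\mathcal{M}^C_{\scp}$.

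For the converse of (i), assume $M\in\mathcal{M}^C_{\sci}$ with $C$-$\id_R(M)<\infty$. Finite $C$-injective dimension forces $M\in\mathcal{A}_C(R)$ (the remark following Definition 2.7), so Proposition 4.11(vi) gives $N:=C\otimes_R M\in\mathcal{M}_{\sci}$, while Lemma 2.5(i) yields $\id_R(N)=C$-$\id_R(M)<\infty$. I am thus reduced to showing that a strongly copure injective module of finite injective dimension is injective: writing $\id_R(N)=m$, if $m\geq 1$ I peel off a short exact sequence $0\to N\to I^0\to K\to 0$ with $I^0$ injective and $\id_R(K)=m-1<\infty$; since $K$ has finite injective dimension and $N\in\mathcal{M}_{\sci}$ we get $\Ext^1_R(K,N)=0$, so the sequence splits, $N$ is a direct summand of $I^0$, hence injective, and in fact $m=0$. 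Finally $M\in\mathcal{A}_C(R)$ makes the unit $M\to\Hom_R(C,C\otimes_R M)=\Hom_R(C,N)$ an isomorphism, so $M\in\mathcal{I}_C$.

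The converse of (ii) is dual. From $M\in\mathcal{M}^C_{\scp}$ with $C$-$\pd_R(M)<\infty$ I get $M\in\mathcal{B}_C(R)$, whence Proposition 4.11(i) places $N:=\Hom_R(C,M)$ in $\mathcal{M}_{\scp}$ and Lemma 2.5(ii) gives $\pd_R(N)<\infty$. Here I would first upgrade the defining vanishing of $\mathcal{M}_{\scp}$ from flat modules to all modules of finite flat dimension by a dimension shift along a flat resolution; then, taking $0\to K\to P_0\to N\to 0$ with $P_0$ projective and $\pd_R(K)<\infty$ (so $\fd_R(K)<\infty$), the vanishing $\Ext^1_R(N,K)=0$ splits the sequence, making $N$ a summand of $P_0$ and hence projective, and $M\cong C\otimes_R\Hom_R(C,M)=C\otimes_R N\in\mathcal{P}_C$ because $M\in\mathcal{B}_C(R)$.

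The one point needing care is the compatibility of the strong-copurity conditions with Foxby equivalence, namely Proposition 4.11(i) and (vi); granting these, the splitting arguments are routine. There is a mild asymmetry between the two parts: in (i) the class $\mathcal{M}_{\sci}$ is already defined against all modules of finite injective dimension (Definition 2.12), so no upgrade is needed, whereas in (ii) one must first pass from flat to finite-flat-dimension modules, and this short dimension-shift step is the only genuine obstacle. If one prefers to bypass Proposition 4.11, an alternative is a direct induction on $C$-$\id_R(M)$ (resp. $C$-$\pd_R(M)$): peel off the leading $\mathcal{I}_C$- (resp. $\mathcal{P}_C$-) term, invoke that $\mathcal{M}^C_{\sci}$ is coresolving (resp. $\mathcal{M}^C_{\scp}$ is resolving) to keep the syzygy in the class, split off the top step, and use closure of $\mathcal{I}_C$ and $\mathcal{P}_C$ under direct summands, which again follows from Lemma 2.5 and Foxby equivalence.
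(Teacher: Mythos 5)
Your argument is correct and follows the paper's own route: both reduce to the classical case via Proposition 4.11 and Lemma 2.5 (finite $C$-$\id$ forces $M\in\mathcal{A}_C(R)$, finite $C$-$\pd$ forces $M\in\mathcal{B}_C(R)$), and then split a bounded (co)resolution of $C\otimes_R M$ (resp. $\Hom_R(C,M)$) using strong copurity of the (co)syzygies. The only real difference is that you split once at the bottom --- which is why you need the dimension-shift upgrade in (ii), since your cokernel $K$ is merely of finite flat dimension --- whereas the paper splits iteratively from the top of the coresolution, where each test module is already injective (dually, projective), so that obstacle never arises; both variants are sound.
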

\begin{proof}
	(i). The necessity is trivial. We only need to prove the sufficiency. Since $C$-$\id_R(M) < \infty$, we have $ M \in \mathcal{A}_C(R) $. Now by Proposition 4.11(v), we have $ C \otimes_R M \in \mathcal{M}_{\sci} $ and hence $ \Ext^i_R(\mathcal{I} , C \otimes_R M ) = 0 $ for all $ i \geq 1 $. Moreover, by Lemma 2.10(ii), $ M $ has an exact $ \mathcal{I}_C $-coresolution which is $ (C \otimes_R -) $-exact. Hence $M$ has a bounded injective resolution \\
\centerline{$ 0 \rightarrow C \otimes_R M \rightarrow E^0 \rightarrow E^1 \rightarrow \cdots \rightarrow E^n \rightarrow 0 $.}
Assume that $ L^i $ is the $ i $-th kernel in the above coresolution. Then $ L^i  \in \mathcal{M}_{\sci} $ since the class $ \mathcal{M}_{\sci} $ is coresolving. In particular $ \Ext^1_R(E^n , L^{n-1}) = 0 $, which shows that the exact sequence
$ 0 \rightarrow L^{n-1} \rightarrow E^{n-1} \rightarrow E^n \rightarrow 0  $ is split. Consequently, $ L^{n-1} $ is injective. Repeating this argument, shows that $ L^1 $  is injective and then we conclude that exact sequence
$ 0 \rightarrow C \otimes_R M \rightarrow E^0 \rightarrow L^1 \rightarrow 0  $ is split. Therefore $ C \otimes_R M $ is injective and so
$ M \in \mathcal{I}_C $ by Lemma 2.5(i).

(ii). Is dual to the proof of (i).
\end{proof}

\begin{cor}
The following statements hold true:
	\begin{itemize}
		\item[(i)]{ An $R$-module $M$ is injective if and only if $ M \in \mathcal{M}_{\emph{\sci}}$ and $\emph{\id}_R(M) < \infty$.}
		\item[(ii)]{An $R$-module $N$ is projective if and only if $ M \in \mathcal{M}_{\emph{\scp}}$ and $\emph{\pd}_R(M) < \infty$.}
	\end{itemize}	
\end{cor}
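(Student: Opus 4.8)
The plan is to recognize that this corollary is nothing more than the specialization of Theorem 4.14 to the case $C = R$, which is legitimate because $R$ is itself a semidualizing module over $R$. So the first thing I would do is record the dictionary that results from setting $C = R$: one has $\mathcal{I}_R = \mathcal{I}$ and $\mathcal{P}_R = \mathcal{P}$, so that the conditions ``$M \in \mathcal{I}_R$'' and ``$M \in \mathcal{P}_R$'' read simply as ``$M$ is injective'' and ``$M$ is projective''. Moreover, by Lemma 2.5 the associated dimensions collapse: $R$-$\id_R(M) = \id_R(R \otimes_R M) = \id_R(M)$ and $R$-$\pd_R(M) = \pd_R(\Hom_R(R,M)) = \pd_R(M)$, so the finiteness hypotheses in Theorem 4.14 match those appearing in the corollary verbatim.

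The one point that needs genuine (though routine) verification is that the subcategories $\mathcal{M}^R_{\sci}$ and $\mathcal{M}^R_{\scp}$ of Definition 4.1 coincide with the subcategories $\mathcal{M}_{\sci}$ and $\mathcal{M}_{\scp}$ of Definition 2.12. After substituting $C = R$, the former class is cut out by the vanishing of $\Ext^i_R(I,M)$ against injective modules $I$, whereas the latter is cut out by vanishing against all modules of finite injective dimension; a priori these look different. I would close this gap by a dimension-shifting induction on $\id_R(E)$: given $E$ with $\id_R(E) = n$, peel off an injective envelope $0 \rightarrow E \rightarrow I^0 \rightarrow E' \rightarrow 0$ with $\id_R(E') = n-1$, and feed this into the long exact sequence for $\Ext_R(-,M)$, where the term $\Ext^i_R(I^0,M)$ vanishes by hypothesis and the term $\Ext^{i+1}_R(E',M)$ vanishes by the inductive hypothesis, forcing $\Ext^i_R(E,M) = 0$ for all $i \geq 1$. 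The dual argument, using flat resolutions and modules of finite flat dimension in the contravariant variable, gives $\mathcal{M}^R_{\scp} = \mathcal{M}_{\scp}$.

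Once this dictionary is in place, statements (i) and (ii) are literally Theorem 4.14(i) and (ii) read with $C = R$, and nothing further is required. I do not anticipate any real obstacle here: the only subtlety is the harmless discrepancy between ``vanishing against injectives (resp.\ flats)'' and ``vanishing against objects of finite injective (resp.\ flat) dimension'', and the dimension shift disposes of it; everything else is a direct substitution into the already-proved theorem.
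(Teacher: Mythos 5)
Your proposal is correct and matches the paper's (implicit) argument: the corollary is stated with no proof of its own, being understood as the specialization $C = R$ of the preceding theorem, which is exactly the route you take. The one point the paper leaves tacit is the identification of $\mathcal{M}^R_{\sci}$ and $\mathcal{M}^R_{\scp}$ (vanishing against injectives, resp.\ flats) with $\mathcal{M}_{\sci}$ and $\mathcal{M}_{\scp}$ (vanishing against modules of finite injective, resp.\ flat, dimension), and your dimension-shifting induction on $\id_R(E)$ (resp.\ $\fd_R(F)$) settles it correctly.
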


\begin{prop}
	Let $M$ be an $R$-module. The following statements hold true.
	\begin{itemize}
		\item[(i)]{One has $ M \in \mathcal{M}^C_{\emph{\ci}} $ if and only if $ \emph{\Hom}_R(\mathcal{F} , M)
			\subseteq \mathcal{M}^C_{\emph{\ci}}$.}
		\item[(ii)]{One has $ M \in \mathcal{M}^C_{\emph{\cp}} $ if and only if $ \underline{\mathcal{P}} \otimes_R M
			\subseteq \mathcal{M}^C_{\emph{\cp}}$, where $ \underline{\mathcal{P}} $ is the subcategory of finitely generated projective modules.}
		\item[(iii)]{One has $ M \in \mathcal{M}^C_{\emph{\scf}} $ if and only if $ \mathcal{P} \otimes_R M
			\subseteq \mathcal{M}^C_{\emph{\scf}}$.}
		\item[(iv)] { If $ M \in \mathcal{M}^C_{\emph{\scot}} \cap \mathcal{M}_{\emph{\sci}}$, then $ \emph{\Hom}_R(\mathcal{F}_C , M)
			\subseteq \mathcal{M}^C_{\emph{\sci}}$.}
	\end{itemize}
\end{prop}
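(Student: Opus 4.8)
The plan is to dispatch the two ``tensor'' statements (ii) and (iii) by elementary direct-summand arguments, and the two ``Hom'' statements (i) and (iv) by rewriting the relevant $\Ext$ so that one of its arguments becomes either an honest object of $\mathcal{I}_C$ or an honest injective module, exploiting that $C$ is finitely presented and that $R$ is Noetherian. In all four parts the backward/trivial implication comes from taking the module $R$ (so that $R\otimes_R M\cong M$ and $\Hom_R(R,M)\cong M$), so the work is in the other direction.

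For (iii), writing a projective $P$ as a direct summand of a free module $R^{(\Lambda)}$ makes $P\otimes_R M$ a summand of $M^{(\Lambda)}$, whence $\Tor^R_i(\mathcal{I}_C,P\otimes_R M)$ is a summand of $\Tor^R_i(\mathcal{I}_C,M^{(\Lambda)})\cong\Tor^R_i(\mathcal{I}_C,M)^{(\Lambda)}$, which vanishes for $i\ge 1$ because $\Tor$ commutes with direct sums and $M\in\mathcal{M}^C_{\scf}$. Part (ii) is the $\Ext$-analogue: for $P\in\underline{\mathcal{P}}$, a summand of $R^{n}$, the group $\Ext^i_R(P\otimes_R M,\mathcal{F}_C)$ is a summand of $\Ext^i_R(M^{n},\mathcal{F}_C)\cong\Ext^i_R(M,\mathcal{F}_C)^{n}$, which is $0$ in degree $1$ when $M\in\mathcal{M}^C_{\cp}$. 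I expect no obstacle here.

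For (i) the forward direction is the interesting one: fixing $I$ injective and $F$ flat I must show $\Ext^1_R(\Hom(C,I),\Hom_R(F,M))=0$. Since $F$ is flat, $\Hom_R(F,-)$ sends injectives to injectives, so the Grothendieck spectral sequence of the composite $\Hom_R(\Hom(C,I),\Hom_R(F,-))=\Hom_R(\Hom(C,I)\otimes_R F,-)$ is available; its five-term exact sequence gives an injection $\Ext^1_R(\Hom(C,I),\Hom_R(F,M))\hookrightarrow\Ext^1_R(\Hom(C,I)\otimes_R F,M)$. Because $C$ is finitely presented and $F$ is flat there is a natural isomorphism $\Hom(C,I)\otimes_R F\cong\Hom_R(C,I\otimes_R F)$, and since $R$ is Noetherian $I\otimes_R F$ is again injective (it is a filtered colimit of injectives, by Lazard together with Bass's theorem). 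Thus $\Hom(C,I)\otimes_R F\in\mathcal{I}_C$, so the target $\Ext^1_R(\Hom(C,I)\otimes_R F,M)$ vanishes as $M\in\mathcal{M}^C_{\ci}$, and the injection forces the source to vanish too.

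For (iv) I would run a derived-adjunction collapse. Fix $I$ injective and $C\otimes_R F'\in\mathcal{F}_C$. The hypothesis $M\in\mathcal{M}^C_{\scot}$ gives $\Ext^i_R(C\otimes_R F',M)=0$ for $i\ge 1$, so $\Hom_R(C\otimes_R F',M)\cong\uhom_R(C\otimes_R F',M)$ in the derived category; adjoint associativity then yields $\uhom_R(\Hom(C,I),\Hom_R(C\otimes_R F',M))\cong\uhom_R\big(\Hom(C,I)\utp(C\otimes_R F'),M\big)$. Now $\Hom(C,I)\in\mathcal{A}_C$ (as $I\in\mathcal{B}_C$), so $\Hom(C,I)\utp C\cong C\otimes_R\Hom(C,I)\cong I$, and flatness of $F'$ gives $\Hom(C,I)\utp(C\otimes_R F')\cong I\otimes_R F'$; hence $\Ext^i_R(\Hom(C,I),\Hom_R(C\otimes_R F',M))\cong\Ext^i_R(I\otimes_R F',M)$ for all $i$. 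Since $I\otimes_R F'$ is injective over the Noetherian ring $R$, it has finite injective dimension, and $M\in\mathcal{M}_{\sci}$ kills these groups for $i\ge 1$, proving $\Hom_R(C\otimes_R F',M)\in\mathcal{M}^C_{\sci}$. The main obstacle throughout is that $\Hom_R(F,-)$ is only left exact, so in (i) one gets not an isomorphism but only the spectral-sequence edge injection, which is just enough because the target already vanishes; in (iv) this difficulty is sidestepped by first using $M\in\mathcal{M}^C_{\scot}$ to concentrate $\uhom_R(C\otimes_R F',M)$ in degree zero. The two enabling lemmas I would isolate are that the tensor product of an injective and a flat module is injective over a Noetherian ring, and that $\Hom_R(C,-)$ commutes with $-\otimes_R F$ for flat $F$ when $C$ is finitely presented.
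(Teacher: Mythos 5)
Your proposal is correct, and it splits naturally into two halves relative to the paper. For parts (ii) and (iii) you take a genuinely more elementary route: realizing $P$ as a direct summand of a free module and pulling the summand through $\Ext^1(-,\mathcal{F}_C)$ resp.\ $\Tor_i(\mathcal{I}_C,-)$ (the latter commuting with arbitrary direct sums) settles everything in one line, whereas the paper instead passes through a presentation $0\to C\otimes_RF\to E\to L\to 0$ (resp.\ invokes the associativity $\Tor_i(\mathcal{I}_C,\mathcal{F}\otimes_RM)\cong\Tor_i(\mathcal{I}_C\otimes_R\mathcal{F},M)$ together with $\mathcal{I}_C\otimes_R\mathcal{F}\subseteq\mathcal{I}_C$); your argument is shorter and makes clear that no properties of $C$ are needed for these two parts. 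For parts (i) and (iv) your route is essentially the paper's in different clothing: the paper proves (i) by tensoring a projective presentation of $\Hom_R(C,E)$ with $F$ and using Hom-tensor adjointness plus the identification $\Hom_R(C,E)\otimes_RF\cong\Hom_R(C,E\otimes_RF)\in\mathcal{I}_C$, which is exactly the degree-one edge injection $\Ext^1_R(\Hom_R(C,E),\Hom_R(F,M))\hookrightarrow\Ext^1_R(\Hom_R(C,E)\otimes_RF,M)$ that you extract from the Grothendieck spectral sequence; and for (iv) the paper uses the third-quadrant spectral sequence $\Ext^p_R(\mathcal{I}_C,\Ext^q_R(\mathcal{F}_C,M))\Rightarrow\Ext^{p+q}_R(\mathcal{I}_C\otimes_R\mathcal{F}_C,M)$, collapsed by $M\in\mathcal{M}^C_{\scot}$ and killed by $\mathcal{I}_C\otimes_R\mathcal{F}_C\subseteq\mathcal{I}$ together with $M\in\mathcal{M}_{\sci}$ --- precisely your derived-adjunction collapse $\uhom_R(\Hom_R(C,I)\utp(C\otimes_RF'),M)\simeq\uhom_R(I\otimes_RF',M)$, with the vanishing $\Tor_{i>0}(\Hom_R(C,I),C)=0$ coming from $\Hom_R(C,I)\in\mathcal{A}_C(R)$ in both treatments. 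The two enabling facts you isolate (injective $\otimes$ flat is injective over a Noetherian ring, and $\Hom_R(C,-)$ commutes with $-\otimes_RF$ for flat $F$) are exactly the ones the paper cites from Enochs--Jenda, so nothing is missing.
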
	
\begin{proof}
	(i). Let $E$ be an injective $R$-module and let $F$ be a flat $R$-module. There exists an exact sequence \\
	\centerline{$ 0 \rightarrow K \rightarrow P \rightarrow \Hom_R(C,E) \rightarrow 0 $, $ (*) $}
	in which $P$ is projective and $ K = \ker(P \rightarrow \Hom_R(C,E) ) $. Applying the exact functor $ - \otimes_R F $ on $ (*) $, we get an exact sequence \\
	 	\centerline{$ 0 \rightarrow K \otimes_R F \rightarrow P \otimes_R F \rightarrow \Hom_R(C,E) \otimes_R F \rightarrow 0 $. $ (**) $}
	Note that $ \Hom_R(C,E) \otimes_R F \cong \Hom_R(C,E \otimes_R F) $ by \cite[Theorem 3.2.14]{EJ1}. Therefore $ \Hom_R(C,E) \otimes_R F \in \mathcal{I}_C $. Now assume that $ M \in \mathcal{M}^C_{\ci} $. Applying the functor $ \Hom_R(-,M) $ on $ (**) $, we get the exact sequence \\
		 	\centerline{$ 0 \rightarrow \Hom_R(P \otimes_R F , M) \rightarrow \Hom_R(K \otimes_R F , M) \rightarrow \Ext_R^1(\Hom_R(C,E \otimes_R F) , M) =0$.}
		 	Finally, the Hom-tensor adjointness isomorphism yields the exact sequence \\
		 	\centerline{$ \Hom_R(P ,\Hom_R(F, M)) \rightarrow \Hom_R(K ,\Hom_R(F, M)) \rightarrow \Ext_R^1(\Hom_R(C,E) , \Hom_R(F, M))$,}
		 	from which we conclude that $ \Ext_R^1(\Hom_R(C,E) , \Hom_R(F, M)) = 0 $. Hence $ \Hom_R(F, M)
		 	\in \mathcal{M}^C_{\ci}$. The converse is evident.	
	
		(ii). Let $F$ be an flat $R$-module and let $P$ be a finitely generated projective $R$-module. There exists an exact sequence \\
		\centerline{$ 0 \rightarrow C \otimes_R F \rightarrow E \rightarrow L \rightarrow 0 $ , $ (\dagger) $}
		in which $E$ is injective and $ L = \coker(C \otimes_R F \rightarrow E) $. Applying the exact functor $ \Hom_R(P,-) $ on $ (\dagger) $, we get an exact sequence \\
		\centerline{$ 0 \rightarrow \Hom_R(P, C \otimes_R F) \rightarrow \Hom_R(P,E) \rightarrow \Hom_R(P,L) \rightarrow 0 $. $ (\dagger \dagger) $}
		Note that $ \Hom_R(P , C \otimes_R F) \cong \Hom_R(P,F) \otimes_R C $ and therefore $ \Hom_R(P , C \otimes_R F) \in \mathcal{F}_C $. Now assume that $ M \in \mathcal{M}^C_{\cp} $. Applying the functor $ \Hom_R(M,-) $ on $ (\dagger \dagger) $, we get the exact sequence \\
		\centerline{$ \Hom_R(M , \Hom_R(P,E)) \rightarrow \Hom_R(M , \Hom_R(P,L)) \rightarrow \Ext_R^1(M , \Hom_R(P , C \otimes_R F)) =0$.}
		Finally, the Hom-tensor adjointness isomorphism yields the exact sequence \\
		\centerline{$ \Hom_R(P \otimes_R M , E) \rightarrow \Hom_R(P \otimes_R M ,L) \rightarrow \Ext_R^1(P \otimes_R M , C \otimes_R F)$,}
		from which we conclude that $ \Ext_R^1(P \otimes_R M , C \otimes_R F) = 0 $. Hence $ P \otimes_R M \in \mathcal{M}^C_{\cp}$. The converse is evident.
		
		(iii). Just observe that $ \Tor_i^R(\mathcal{I}_C , \mathcal{F} \otimes_R M) \cong \Tor_i^R(\mathcal{I}_C  \otimes_R \mathcal{F} , M)  $ by \cite[Corollary 10.61]{R}, and that $ \mathcal{I}_C  \otimes_R \mathcal{F} \subseteq  \mathcal{I}_C$ by \cite[Theorem 3.2.14]{EJ1}.

(iv). First note that $ \Tor^R_i(\mathcal{I}_C , \mathcal{F}_C) = 0 $ for all $ i > 0 $. Hence by \cite[Theorem 10.64]{R} there is a third quadrant spectral sequence  \\
\centerline{$ E_2^{p,q} = \Ext^p_R(\mathcal{I}_C , \Ext^q_R(\mathcal{F}_C , M)) \underset{p}\Rightarrow \Ext^{p+q}_R (\mathcal{I}_C \otimes_R \mathcal{F}_C , M)$.}
Next observe that  $\Ext^{q>0}_R(\mathcal{F}_C , M) = 0$ by assumption, and there is an inclusion $ \mathcal{I}_C \otimes_R \mathcal{F}_C \subseteq \mathcal{I} $. Hence $ \Ext^i_R(\mathcal{I}_C , \Hom_R(\mathcal{F}_C , M)) \cong \Ext^i_R (\mathcal{I}_C \otimes_R \mathcal{F}_C , M) = 0 $ for all $ i \geq 1 $, as wanted.	
\end{proof}
\begin{thm}
The following statements hold true:
\begin{itemize}
	\item[(i)]{ Assume that $ M \in \mathcal{M}^C_{\emph{\scf}} $. Then $ \mathcal{I}_C \otimes_R M \subseteq \mathcal{M}_{\emph{\scot}}$.}
	\item[(ii)] {Assume that $ M \in \mathcal{M}^C_{\emph{\sci}} $. Then $ \emph{\Hom}_R(\mathcal{I}_C , M) \subseteq \mathcal{M}_{\emph{\stf}}$.}
\end{itemize}	
\end{thm}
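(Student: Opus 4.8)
The plan is to prove both parts by the same dimension-shifting device that drives Proposition 3.1, resting on two auxiliary containments: $\mathcal{I}_C \subseteq \mathcal{M}_{\scot}$ (this is exactly the first step in the proof of Proposition 3.1) and $\mathcal{F}_C \subseteq \mathcal{M}_{\stf}$. I would record the latter first, since it is short: if $F'$ is flat and $\fd_R X < \infty$, then $X \in \mathcal{A}_C(R)$, so $\Tor_i^R(C,X)=0$ for $i\ge 1$, and because $F'$ is flat one has $\Tor_i^R(C\otimes_R F', X)\cong \Tor_i^R(C,X)\otimes_R F'=0$; thus every module in $\mathcal{F}_C$ is strongly torsionfree. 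I would also invoke \cite[Corollary 3.4]{F1}: over the $d$-dimensional ring $R$, finite flat dimension forces projective dimension $\le d$, which is precisely what makes the shift terminate.

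For (i), fix $I=\Hom_R(C,E)\in\mathcal{I}_C$ and a flat resolution $\cdots\to Q_1\to Q_0\to M\to 0$. Since $M\in\mathcal{M}^C_{\scf}$ gives $\Tor_i^R(I,M)=0$ for $i\ge 1$, tensoring by $I$ preserves exactness and produces $\cdots\to I\otimes_R Q_1\to I\otimes_R Q_0\to I\otimes_R M\to 0$. Each term lies in $\mathcal{I}_C$: indeed $I\otimes_R Q_j\cong\Hom_R(C,E\otimes_R Q_j)$ by the tensor-evaluation isomorphism \cite[Theorem 3.2.14]{EJ1} (valid since $Q_j$ is flat), and $E\otimes_R Q_j$ is injective as $R$ is Noetherian; hence every term lies in $\mathcal{I}_C\subseteq\mathcal{M}_{\scot}$. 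Now for any $F$ with $\fd_R F<\infty$ I break this resolution into short exact sequences with kernels $K_j$ and shift: as the terms are strongly cotorsion one has $\Ext^{i}_R(F,I\otimes_R Q_j)=0$ for all $i\ge 1$, so $\Ext^1_R(F,I\otimes_R M)\cong\Ext^2_R(F,K_1)\cong\cdots\cong\Ext^{d+1}_R(F,K_d)=0$, the last group vanishing because $\pd_R F\le d$. Thus $I\otimes_R M\in\mathcal{M}_{\scot}$.

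For (ii), I would run the mirror construction, noting that it is genuinely separate from (i): dualizing into an injective cogenerator interchanges $\mathcal{M}_{\stf}$ and $\mathcal{M}_{\scot}$ by Lemma 2.15, and so does not formally convert one statement into the other. Fix $I=\Hom_R(C,E)$ and an injective resolution $0\to M\to J^0\to J^1\to\cdots$. Since $M\in\mathcal{M}^C_{\sci}$ gives $\Ext^i_R(I,M)=0$ for $i\ge 1$, applying $\Hom_R(I,-)$ preserves exactness, yielding $0\to\Hom_R(I,M)\to\Hom_R(I,J^0)\to\Hom_R(I,J^1)\to\cdots$. Here the Hom-evaluation isomorphism \cite{EJ1} gives $\Hom_R(\Hom_R(C,E),J^k)\cong C\otimes_R\Hom_R(E,J^k)$ (because $C$ is finitely presented and $J^k$ is injective), and $\Hom_R(E,J^k)$ is flat since $E$ and $J^k$ are injective over a Noetherian ring; hence each term lies in $\mathcal{F}_C\subseteq\mathcal{M}_{\stf}$. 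Breaking the coresolution into short exact sequences with cokernels $C^j$ and shifting with $\pd_R X\le d$ gives $\Tor_1^R(\Hom_R(I,M),X)\cong\Tor_2^R(C^1,X)\cong\cdots\cong\Tor_{d+1}^R(C^d,X)=0$ for every $X$ of finite flat dimension, whence $\Hom_R(I,M)\in\mathcal{M}_{\stf}$.

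The main obstacle is not the shifting itself but correctly placing the terms of the two resolutions: seeing that $I$ tensored with a flat module lands in $\mathcal{I}_C$, and that $\Hom_R(I,-)$ of an injective lands in $\mathcal{F}_C$. Both facts hinge on the evaluation isomorphisms together with the two Noetherian phenomena used above, namely that the tensor product of an injective with a flat is injective and that the Hom of two injectives is flat. Once the terms are seen to sit in $\mathcal{I}_C\subseteq\mathcal{M}_{\scot}$, respectively $\mathcal{F}_C\subseteq\mathcal{M}_{\stf}$, the finiteness of $\dim R$ lets the dimension count close off the argument.
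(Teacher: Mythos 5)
Your proof is correct and follows essentially the same route as the paper's: resolve $M$ by flat (the paper uses projective) modules, respectively coresolve by injectives, apply $\mathcal{I}_C\otimes_R-$, respectively $\Hom_R(\mathcal{I}_C,-)$, use the hypothesis on $M$ to see the resulting complex is an exact $\mathcal{I}_C$-resolution (resp. $\mathcal{F}_C$-coresolution), and then dimension-shift against $\mathcal{I}_C\subseteq\mathcal{M}_{\scot}$ (resp. $\mathcal{F}_C\subseteq\mathcal{M}_{\stf}$), closing the shift with $\pd_R F\le d$. You merely make explicit two points the paper leaves implicit, namely the evaluation isomorphisms placing the terms in $\mathcal{I}_C$ and $\mathcal{F}_C$ and the direct verification that $\mathcal{F}_C\subseteq\mathcal{M}_{\stf}$.
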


\begin{proof}
	(i). By assumption $ \Tor^R_i(\mathcal{I}_C, M) = 0  $ for all $ i \geq 1 $. Hence if $ \textbf{P} \rightarrow M $ is a projective resolution of $M$ then $ \mathcal{I}_C \otimes_R \textbf{P} \rightarrow \mathcal{I}_C \otimes_R M \rightarrow 0$ is an exact complex which is an
	$ \mathcal{I}_C $-resolution for $ M $. Note that $ \mathcal{I}_C \subseteq \mathcal{M}_{\scot} $ by Proposition 3.1(i). Now break this
	$ \mathcal{I}_C $-resolution to short exact sequences and use \cite[Corollary 3.2.7]{GR}, to see that $ \Ext^i_R(F , \mathcal{I}_C \otimes_R M) = 0 $ for all $ i \geq 1 $ and all $R$-modules $F$ with $ \fd_R(F) < \infty $.
	
	(ii). By assumption $ \Ext^i_R(\mathcal{I}_C, M) = 0  $ for all $ i \geq 1 $. Hence if $ M \rightarrow \textbf{E} $ is an injective coresolution of $M$, then $0 \rightarrow \Hom_R(\mathcal{I}_C , M) \rightarrow \Hom_R(\mathcal{I}_C , \textbf{E})$ is an exact complex which is an
	$  \mathcal{F}_C  $-coresolution for $ \Hom_R(\mathcal{I}_C , M) $. Note that $ \mathcal{F}_C \subseteq \mathcal{M}_{\stf} $ by Proposition 3.1(i).
	Now break this $ \mathcal{F}_C $-coresolution to short exact sequences to see that $ \Tor^R_i(F , \Hom_R(\mathcal{I}_C , M)) = 0 $ for all $ i \geq 1 $ and all $R$-modules $F$ with $ \fd_R(F) < \infty $.
\end{proof}
\begin{thm}	
Let $\emph{\dim}(R) < \infty$ and let $C$ be dualizing. The following hold true:
	\begin{itemize}
		\item[(i)] {$ \mathcal{M}^C_{\emph{\scot}} = \mathcal{M}_{\emph{\sci}} $.}
		\item[(ii)] {$ \mathcal{M}_{\emph{\scot}} = \mathcal{M}^C_{\emph{\sci}} $.}	
		\item[(iii)] {$ \mathcal{M}^C_{\emph{\stf}} = \mathcal{M}_{\emph{\scf}} $.}
		\item[(iv)] {$ \mathcal{M}_{\emph{\stf}} = \mathcal{M}^C_{\emph{\scf}} $.}
		\item[(v)] {If, in addition, $(R , \fm)$  is local, then $ \underline{\mathcal{M}}^C_{\emph{\scp}} = \emph{\mcm}(R) $ where $ \underline{\mathcal{M}}^C_{\emph{\scp}} $ is the full subcategory of $ \mathcal{M}^C_{\emph{\scp}} $ consisting of finitely generated $R$-modules and $ \emph{\mcm}(R) $ is the full subcategory of maximal Cohen-Macaulay $R$-modules.}	
	\end{itemize}	
\end{thm}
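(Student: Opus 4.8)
The plan is to reduce all five parts to two structural facts about a dualizing module $C$ over a ring of Krull dimension $d<\infty$. Fact (a): every module in $\mathcal{F}_C$ has finite injective dimension, and every module in $\mathcal{I}_C$ has finite flat (hence finite projective) dimension. The second assertion is precisely the bound $\fd_R\Hom_R(C,I)\le d$ extracted in the proof of Lemma 3.2, and the first follows dually: for flat $F$ one has $\pd_R F\le d$ by \cite[Corollary 3.4]{F1}, and the Foxby equivalence $C\otimes_R-$ carries finite projective dimension to finite injective dimension because $C$ is dualizing, so $\id_R(C\otimes_R F)<\infty$. Fact (b): by the same equivalence in the other direction, any $E$ with $\id_R E<\infty$ lies in $\mathcal{B}_C(R)$ and satisfies $C$-$\fd_R(E)=\fd_R(\Hom_R(C,E))<\infty$ (Lemma 2.5(iii)), while any $F$ with $\fd_R F<\infty$ satisfies $C$-$\id_R(F)=\id_R(C\otimes_R F)<\infty$ (Lemma 2.5(i)). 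Thus every finite-injective-dimension module admits a finite $\mathcal{F}_C$-resolution, and every finite-flat-dimension module a finite $\mathcal{I}_C$-coresolution.

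I first prove (i) and (ii). For (i), the inclusion $\mathcal{M}_{\sci}\subseteq\mathcal{M}^C_{\scot}$ is immediate from Fact (a), since the test class $\mathcal{F}_C$ consists of modules of finite injective dimension. For the reverse inclusion take $M\in\mathcal{M}^C_{\scot}$ and $E$ with $\id_R E<\infty$; by Fact (b) choose a finite exact resolution $0\to G_s\to\cdots\to G_0\to E\to 0$ with each $G_j\in\mathcal{F}_C$, split it into short exact sequences, and use the long exact sequences of $\Ext_R(-,M)$ (valid along any exact sequence) to obtain $\Ext^i_R(E,M)\cong\Ext^{i+s}_R(G_s,M)=0$ for all $i\ge 1$, i.e. $M\in\mathcal{M}_{\sci}$. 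Statement (ii) is symmetric: $\mathcal{I}_C$ consists of finite-flat-dimension modules, giving $\mathcal{M}_{\scot}\subseteq\mathcal{M}^C_{\sci}$, and any $F$ with $\fd_R F<\infty$ has a finite $\mathcal{I}_C$-coresolution $0\to F\to J^0\to\cdots\to J^t\to 0$, along which the dimension shift in $\Ext_R(F,-)$ forces $\Ext^{\ge 1}_R(F,M)=0$ once $M\in\mathcal{M}^C_{\sci}$. The main obstacle is Fact (b): the whole argument hinges on the dualizing hypothesis, which alone guarantees that the Foxby functors interchange finiteness of injective dimension and of projective/flat dimension, and hence supply the finite $\mathcal{F}_C$-resolutions and $\mathcal{I}_C$-coresolutions that drive the dimension shifts.

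Parts (iii) and (iv) then follow formally by Matlis duality against an injective cogenerator $E$. For (iii), one has $M\in\mathcal{M}^C_{\stf}$ iff $\Hom_R(M,E)\in\mathcal{M}^C_{\scot}$ by Lemma 4.5(i), and $M\in\mathcal{M}_{\scf}$ iff $\Hom_R(M,E)\in\mathcal{M}_{\sci}$ by Lemma 2.15(ii); since (i) gives $\mathcal{M}^C_{\scot}=\mathcal{M}_{\sci}$, these two conditions on $\Hom_R(M,E)$ coincide, whence $\mathcal{M}^C_{\stf}=\mathcal{M}_{\scf}$. Likewise (iv) follows from (ii) together with Lemma 2.15(i) and Lemma 4.5(ii).

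Finally, for (v) I reduce the test class $\mathcal{F}_C$ to the single module $C$. If $M$ is finitely generated, resolving $M$ by finitely generated free modules and using flatness of $F$ yields a natural isomorphism $\Ext^i_R(M,C\otimes_R F)\cong\Ext^i_R(M,C)\otimes_R F$ for every flat $F$; hence $M\in\underline{\mathcal{M}}^C_{\scp}$ iff $\Ext^i_R(M,C)=0$ for all $i\ge 1$. Since $R$ is local and $C$ is dualizing, $R$ is Cohen-Macaulay and $C$ is a canonical module, so local duality gives $\H^j_{\fm}(M)\cong\Ext^{d-j}_R(M,C)^{\vee}$ (Matlis dual), while $\id_R C=d$ kills $\Ext^{>d}$. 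Therefore $\Ext^{\ge 1}_R(M,C)=0$ is equivalent to $\H^j_{\fm}(M)=0$ for all $j<d$, i.e. to $\depth_R M=d$, i.e. to $M$ being maximal Cohen-Macaulay; this gives $\underline{\mathcal{M}}^C_{\scp}=\mcm(R)$ and completes the proof.
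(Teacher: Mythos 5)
Your proof is correct and follows essentially the same route as the paper's: both inclusions in (i) and (ii) are obtained by dimension shifting along finite $\mathcal{F}_C$-resolutions and $\mathcal{I}_C$-coresolutions whose existence is exactly what the dualizing hypothesis provides, (iii) and (iv) are deduced from (i) and (ii) by Matlis duality via Lemmas 2.15 and 4.5, and (v) reduces via \cite[Theorem 3.2.15]{EJ1} to the vanishing of $\Ext^i_R(M,C)$ for $i \geq 1$. The only cosmetic differences are that the paper manufactures the finite (co)resolutions explicitly from surjective $\mathcal{F}_C$-precovers and injective $\mathcal{I}_C$-preenvelopes (tracking that the relative dimension drops by one at each step) where you invoke Lemma 2.5 and Foxby equivalence directly, and that the paper cites \cite[Theorem 9.2.16]{EJ1} for the equivalence of $\Ext^{\geq 1}_R(M,C)=0$ with $M$ being maximal Cohen--Macaulay where you reprove it by local duality.
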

\begin{proof}
(i) First assume that $ M \in \mathcal{M}^C_{\scot} $. Assume that $E$ is an $R$-module with $\id_R(M) < \infty$.
Then $ E \in \mathcal{B}_C (R) $ and hence by Lemma 2.10(ii), the $ \mathcal{F}_C $-precover of $M$ is surjective. So that we have an exact sequence of the form \\
\centerline{$ 0 \rightarrow L \rightarrow X \rightarrow M \rightarrow 0$ ,}
in which $X \in \mathcal{F}_C $ and $ L = \ker(X \rightarrow M) $. Note that $ \id_R(X) < \infty $ since $C$ is dualizing. Hence $\id_R(L) < \infty$. Therefore by this argument for any $ i \geq 0 $ we can construct exact sequences \\
 \centerline{$ 0 \rightarrow E_{i + 1} \rightarrow X_i \rightarrow E_i \rightarrow 0$ , $ (*) $}
 in which $ E_0 = E $, $X_i \in \mathcal{F}_C $ and $ \id_R(E_i) < \infty $. Since $ E_{i+1} \in \mathcal{B}_C (R) $ we have
 $  \Ext^1_R(C , E_{i+1} ) = 0 $. Hence the exact sequence \\
  \centerline{$ 0 \rightarrow E_{i + 1} \rightarrow X_i \rightarrow E_i \rightarrow 0$ ,}
  induces an exact sequence \\
   \centerline{$ 0 \rightarrow \Hom_R(C , E_{i + 1}) \rightarrow \Hom_R(C , X_i) \rightarrow \Hom_R(C , E_i) \rightarrow 0$ .}
   Now by Lemma 2.5(iii), $ \Hom_R(C , X_i) $ is flat and
    also \\
\centerline{ $ C $-$ \fd_R(E_{i + 1}) = \fd_R(\Hom_R(C , E_{i + 1})) = \fd_R(\Hom_R(C , E_i)) - 1 = C$-$ \fd_R(E_i) - 1 $,}
for all $ i \geq 0 $. Consequently, $ E_j \in \mathcal{F}_C $ for all $ j \geq d $. Application of the functor $ \Hom_R(- , M) $ on the exact sequences $ (*) $ yields the isomorphisms \\
 \centerline{$ \Ext^i_R(E_0 , M) \cong \Ext^{i+1}_R(E_1 , M) \cong \cdots \cong \Ext^{i+d}_R(E_d , M) = 0 $,}
for all $ i \geq 0 $, where the last equality holds because $ M \in \mathcal{M}^C_{\scot} $. Hence $ M \in \mathcal{M}_{\sci} $. The inclusion
$ \mathcal{M}^C_{\scot} \supseteq \mathcal{M}_{\sci} $ is trivial since all modules in $ \mathcal{F}_C  $ have finite injective dimensions.

(ii). First assume that $ M \in \mathcal{M}^C_{\sci} $. Assume that $F$ is an $R$-module with $\fd_R(M) < \infty$.
Then $ F \in \mathcal{A}_C (R) $ and hence by Lemma 2.10(ii), the $ \mathcal{I}_C $-preenvelope of $M$ is injective. Note that all modules in
 $ \mathcal{I}_C $ have finite flat dimensions since $ C $ is dualizing. Therefore we can construct exact sequences \\
 \centerline{$ 0 \rightarrow F_i \rightarrow Y_i \rightarrow F_{i + 1} \rightarrow 0$ , $ (**) $}
 in which $ F_0 = F $, $Y_i \in \mathcal{I}_C $ and $ \fd_R(F_i) < \infty $. As $ \fd_R(F_d) < \infty $, by \cite[Corollary 3.4]{F1}, we have $\pd_R (F_d) \leq d$. Application of the functor $ \Hom_R(- , M) $ on the exact sequences $ (**) $ yields the isomorphisms \\
 \centerline{$ \Ext^i_R(F_0 , M) \cong \Ext^{i+1}_R(F_1 , M) \cong \cdots \cong \Ext^{i+d}_R(F_d , M) = 0 $.}
 for all $ i \geq 1 $. Thus $ M \in \mathcal{M}_{\scot} $. The inclusion
 $ \mathcal{M}_{\scot} \subseteq \mathcal{M}^C_{\sci} $ is trivial since all modules in $ \mathcal{I}_C $ have finite flat dimensions.

 (iii). Assume that $E$ is an injective cogenerator and that $ M \in \mathcal{M}^C_{\stf} $.
  Then $ \Hom_R(M,E) \in \mathcal{M}^C_{\scot} = \mathcal{M}_{\sci} $ by Lemma 4.5(i). Hence $ M \in \mathcal{M}_{\scf} $ by Lemma 2.15(ii). Next
  if $ M \in \mathcal{M}_{\scf} $, then $ \Hom_R(M,E) \in \mathcal{M}_{\sci} = \mathcal{M}^C_{\scot} $ by by Lemma 2.15(ii). Hence $ M \in  M \in \mathcal{M}^C_{\stf} $ by Lemma 4.5(i).

  (iv). Assume that $E$ is an injective cogenerator and that $ M \in \mathcal{M}^C_{\scf} $.
  Then $ \Hom_R(M,E) \in \mathcal{M}^C_{\sci} = \mathcal{M}_{\scot} $ by Lemma 4.5(ii). Hence $ M \in \mathcal{M}_{\stf} $ by Lemma 2.15(i). Next
  if $ M \in \mathcal{M}_{\stf} $, then $ \Hom_R(M,E) \in \mathcal{M}_{\scot} = \mathcal{M}^C_{\sci} $ by Lemma 2.15(i). Hence $ M \in \mathcal{M}^C_{\scf} $ by Lemma 4.5(ii).

  (v). Suppose that $ M \in \mcm(R) $. Then by \cite[Theorem 9.2.16]{EJ1}, we have $ \Ext^i_R(M,C) = 0 $ for all $i \geq 0$. Hence if $F$ is any flat $R$-module, then by \cite[Theorem 3.2.15]{EJ1} we have the isomorphism $ \Ext^i_R(M,C \otimes_R F) \cong \Ext^i_R(M,C) \otimes_R F = 0  $ for all $i \geq 0$, whence $ M \in \underline{\mathcal{M}}^C_{\scp} $. The reverse inclusion is easy by another use of \cite[Theorem 9.2.16]{EJ1}.
\end{proof}
\begin{cor}
Let $ \emph{\dim(R)} < \infty $ and let $C$ be dualizing. For an $R$-module $ M $, the following statements hold true:
	\begin{itemize}
		\item[(i)]{One has $ M \in \mathcal{I}_C $ if and only if $ M \in \mathcal{M}_{\emph{\scot}}$ and $ C \emph{-\id}_R(M) < \infty$.}
		\item[(ii)]{One has $ M \in \mathcal{F}_C $ if and only if $ M \in \mathcal{M}_{\emph{\stf}}$ and $ C \emph{-\fd}_R(M) < \infty$.}
		\item[(i)]{One has $ M \in \mathcal{I}_C $ if and only if $ M \in \mathcal{M}_{\emph{\scot}}$ and $ \emph{\fd}_R(M) < \infty$.}
		\item[(ii)]{One has $ M \in \mathcal{F}_C $ if and only if $ M \in \mathcal{M}_{\emph{\stf}}$ and $ \emph{\id}_R(M) < \infty$.}
	\end{itemize}	
\end{cor}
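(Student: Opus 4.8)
The plan is to reduce all four statements to part (i), which is immediate from results already in hand, and to transport between the ``$C$-versions'' and the ``classical versions'' of the homological dimensions using the dualizing hypothesis and character-module duality. For (i), recall from Theorem 4.17(ii) that $\mathcal{M}_{\scot} = \mathcal{M}^C_{\sci}$; hence the condition ``$M \in \mathcal{M}_{\scot}$ and $C$-$\id_R(M) < \infty$'' is literally ``$M \in \mathcal{M}^C_{\sci}$ and $C$-$\id_R(M) < \infty$'', which by Theorem 4.13(i) is equivalent to $M \in \mathcal{I}_C$. So (i) needs no new work beyond citing Theorem 4.13(i) and Theorem 4.17(ii).

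The passage from (i) to (iii) rests on a single observation: since $C$ is dualizing and $\dim R < \infty$, one has $C$-$\id_R(M) < \infty$ if and only if $\fd_R(M) < \infty$, for every $R$-module $M$. For the forward direction, a finite $\mathcal{I}_C$-coresolution of $M$ has all terms of flat dimension at most $d$ (this is the estimate $\fd_R(\Hom_R(C,I)) \le d$ from the proof of Lemma 3.2), so breaking it into short exact sequences gives $\fd_R(M) < \infty$. Conversely, if $\fd_R(M) < \infty$ then $M \in \mathcal{A}_C(R)$, so applying $C \otimes_R -$ to a finite flat resolution of $M$ stays exact and produces a finite $\mathcal{F}_C$-resolution of $C \otimes_R M$; as each module in $\mathcal{F}_C$ has finite injective dimension (again because $C$ is dualizing, as used in the proof of Theorem 4.17), we obtain $\id_R(C \otimes_R M) = C$-$\id_R(M) < \infty$ via Lemma 2.5(i). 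Granting this equivalence, statement (iii) is just (i) with $C$-$\id_R(M) < \infty$ rewritten as $\fd_R(M) < \infty$. An entirely dual argument gives the companion equivalence $C$-$\fd_R(M) < \infty$ if and only if $\id_R(M) < \infty$: the forward direction peels off a finite $\mathcal{F}_C$-resolution whose terms have finite injective dimension, and the converse applies $\Hom_R(C,-)$ to a finite injective coresolution of $M$ (which lies in $\mathcal{B}_C(R)$) and invokes Lemma 2.5(iii).

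It remains to treat the flat statements (ii) and (iv), for which there is no direct analogue of Theorem 4.13; here I would pass to the character module $(-)^{\vee} = \Hom_R(-,E)$ for an injective cogenerator $E$. The two transfer facts are: $M \in \mathcal{M}_{\stf}$ if and only if $M^{\vee} \in \mathcal{M}_{\scot}$ (Lemma 2.15(i)), and $\id_R(M) < \infty$ if and only if $\fd_R(M^{\vee}) < \infty$ (from the isomorphisms $\Ext^i_R(N,M^{\vee}) \cong \Hom_R(\Tor^R_i(N,M),E)$ and $\Tor^R_i(N,M^{\vee}) \cong \Hom_R(\Ext^i_R(N,M),E)$ of \cite[Theorem 3.2.1 and 3.2.13]{EJ1}). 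Applying (iii) to $M^{\vee}$ then yields $M \in \mathcal{F}_C$ if and only if $M \in \mathcal{M}_{\stf}$ and $\id_R(M) < \infty$, which is (iv); and (ii) follows from (iv) by the equivalence $C$-$\fd_R(M) < \infty \Leftrightarrow \id_R(M) < \infty$ established above. The one point demanding genuine care, and the main obstacle, is the duality $M \in \mathcal{F}_C$ if and only if $M^{\vee} \in \mathcal{I}_C$ that legitimizes this transfer. The forward direction is the adjunction $\Hom_R(C \otimes_R F, E) \cong \Hom_R(C, \Hom_R(F,E))$ combined with the fact that $\Hom_R(F,E)$ is injective when $F$ is flat; the reverse direction is the delicate one, and I would establish it through the Auslander/Bass-class characterizations $N \in \mathcal{I}_C \Leftrightarrow (N \in \mathcal{A}_C(R)$ and $C \otimes_R N$ injective$)$ and $M \in \mathcal{F}_C \Leftrightarrow (M \in \mathcal{B}_C(R)$ and $\Hom_R(C,M)$ flat$)$, together with $M \in \mathcal{B}_C(R) \Leftrightarrow M^{\vee} \in \mathcal{A}_C(R)$, the Hom-evaluation isomorphism $C \otimes_R M^{\vee} \cong \Hom_R(C,M)^{\vee}$, and the character-module equivalence ``$N$ flat $\Leftrightarrow N^{\vee}$ injective''.
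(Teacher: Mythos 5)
Your argument is correct, and its anchor is the same as the paper's: part (i) is exactly Theorem 4.17(ii) combined with Theorem 4.13(i), and the flat statements are transferred to the injective ones through the character module $(-)^{\vee}=\Hom_R(-,E)$. Where you genuinely diverge is in part (iii) and in the routing. The paper proves (iii) by a short direct splitting argument: $\fd_R(M)<\infty$ puts $M$ in $\mathcal{A}_C(R)$, so Lemma 2.10(ii) gives an injective $\mathcal{I}_C$-preenvelope $0\to M\to \Hom_R(C,I)\to L\to 0$; since $L$ then has finite flat dimension and $M$ is strongly cotorsion, $\Ext^1_R(L,M)=0$, the sequence splits, and $M$ is a direct summand of $\Hom_R(C,I)$ --- no mention of $C$-$\id$ at all. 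You instead isolate the two auxiliary equivalences $C$-$\id_R(M)<\infty\Leftrightarrow\fd_R(M)<\infty$ and $C$-$\fd_R(M)<\infty\Leftrightarrow\id_R(M)<\infty$ (valid here because $C$ dualizing forces every module of $\mathcal{I}_C$ to have finite flat, hence finite projective, dimension and every module of $\mathcal{F}_C$ to have finite injective dimension, with the converses handled through the Auslander and Bass classes exactly as you describe) and use them to collapse (iii) into (i) and (ii) into (iv); this is more bookkeeping but produces two reusable dimension comparisons that the paper never states. For the flat parts the paper goes (i)$\Rightarrow$(ii) and (iii)$\Rightarrow$(iv) by duality, while you go (iii)$\Rightarrow$(iv)$\Rightarrow$(ii); both orderings are fine. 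You are also right that the only delicate point in the duality transfer is the implication $M^{\vee}\in\mathcal{I}_C\Rightarrow M\in\mathcal{F}_C$: your route through $\mathcal{A}_C$/$\mathcal{B}_C$ membership, the isomorphism $C\otimes_R M^{\vee}\cong\Hom_R(C,M)^{\vee}$, and ``flat iff character module injective'' is precisely an abstract repackaging of the paper's explicit computation $\Hom_R(\Hom_R(C,M),E)\cong C\otimes_R\Hom_R(C,I)\cong I$ followed by Lemma 2.5(iii). In short: same toolbox, different decomposition, and your version buys a cleaner statement of which finiteness conditions are interchangeable when $C$ is dualizing, at the price of a longer path to (iii).
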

\begin{proof}
	(i). The necessity is trivial. We only need to prove the sufficiency. Assume that $ M \in \mathcal{M}_{\scot}$. Then by Theorem 4.17(ii),
	we have  $ M \in \mathcal{M}^C_{\sci}$, and hence $ M \in \mathcal{I}_C $ by Theorem 4.13(i).
	
	(ii). The necessity is trivial. Assume that $E$ is an injective cogenerator and that $ M \in \mathcal{M}_{\stf}$. Then
	$ \Hom_R(M,E) \in \mathcal{M}_{\scot} $ by Lemma 2.15(i), and that we have $ C $-$ \id_R(\Hom_R(M,E)) < \infty $ since $ \Hom_R(\mathcal{F}_C,E) \in \mathcal{I}_C $. Hence $ \Hom_R(M,E) \in \mathcal{I}_C $ by (i). Set $ \Hom_R(M,E) = \Hom_R(C,I) $ where $I$ is injective. We have the isomorphisms
	 \[\begin{array}{rl}
	 \Hom_R(\Hom_R(C,M) , E) &\cong C \otimes_R \Hom_R(M,E),\\
	 &\cong C \otimes_R \Hom_R(C, I)\\
	  &\cong I\\
	 \end{array}\]
where the first isomorphism is from \cite[theorem 3.2.11]{EJ1}, and the last one holds because $ I \in \mathcal{B}_C(R) $. It follows that $ \Hom_R(C,M) $ is flat, whence $ M \in \mathcal{F}_C $ by Lemma 2.5(iii).

(iii).  The necessity is trivial. If $ \fd_R(M) < \infty$, then $ M \in \mathcal{A}_C(R) $ and hence, by Lemma 2.10(ii), there is an exact sequence 
$ 0 \rightarrow M \rightarrow \Hom_R(C,I) \rightarrow L \rightarrow 0 $, in which $I$ is injective and $ L = \coker(M \rightarrow \Hom_R(C,I)) $. Now $ \fd_R(L) < \infty $ since both $M$ and $ \Hom_R(C,I) $ have finite flat dimensions. Hence $ \Ext_R^1(L,M) = 0 $, and the sequence splits, whence $ M \in \mathcal{I}_C $.

(iv). Again, the necessity is trivial. Now if $E$ is an injective cogenerator and $ \id_R(M) < \infty$, then $ \fd_R(\Hom_R(M,E)) < \infty $, and $ \Hom_R(M,E) \in \mathcal{M}_{\scot} $ by Lemma 2.15(i). Hence $ \Hom_R(M,E) \in \mathcal{I}_C $ by (iii), and then $ M \in \mathcal{F}_C $ by the argument used in part (ii).	
\end{proof}
\begin{cor}
	Let $R$ be a Gorenstein ring of finite Krull dimension. The following statements hold true:
	\begin{itemize}
		\item[(i)]{An $R$-module $M$ is injective if and only if $ M \in \mathcal{M}_{\emph{\scot}}$ and $ \emph{\id}_R(M) < \infty$.}
		\item[(ii)]{An $R$-module $M$ is flat if and only if $ M \in \mathcal{M}_{\emph{\stf}}$ and $ \emph{\fd}_R(M) < \infty$.}
	\end{itemize}	
\end{cor}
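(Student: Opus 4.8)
The plan is to obtain this corollary as the special case $C = R$ of Corollary 4.18. The crucial observation is that when $R$ is Gorenstein of finite Krull dimension, the ring $R$ itself is a dualizing module in the sense of the present paper: $R$ is always semidualizing (it is the instance $C = R$ of Definition 2.2, being free of rank one), a Gorenstein ring is Cohen--Macaulay, and $\id_R(R) < \infty$ since a Gorenstein ring of finite Krull dimension has finite self-injective dimension (indeed $\id_R(R) = \dim R$). Thus the standing hypotheses of Corollary 4.18, namely $\dim(R) < \infty$ and $C$ dualizing, are satisfied with the choice $C = R$.

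Next I would translate the classes and homological dimensions appearing in Corollary 4.18 into their meaning for $C = R$. By Definition 2.3, $\mathcal{I}_R(R)$ consists of the modules $\Hom_R(R,I) \cong I$ with $I$ injective, so $\mathcal{I}_R$ is exactly the class of injective $R$-modules; dually $\mathcal{F}_R$ consists of the modules $R \otimes_R F \cong F$ with $F$ flat, hence is exactly the class of flat $R$-modules. Likewise, directly from the definitions (or from Lemma 2.5 with $C = R$) one has $R$-$\id_R(M) = \id_R(M)$ and $R$-$\fd_R(M) = \fd_R(M)$ for every $R$-module $M$. No other identifications are needed.

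With these translations in hand, statement (i) is precisely Corollary 4.18(i) read with $C = R$: an $R$-module $M$ lies in $\mathcal{I}_R$, i.e.\ is injective, if and only if $M \in \mathcal{M}_{\scot}$ and $\id_R(M) < \infty$. Similarly, statement (ii) is precisely Corollary 4.18(ii) with $C = R$: $M$ lies in $\mathcal{F}_R$, i.e.\ is flat, if and only if $M \in \mathcal{M}_{\stf}$ and $\fd_R(M) < \infty$. Hence both assertions follow by pure specialization, with nothing left to compute.

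There is essentially no real obstacle in this argument; the only point meriting a moment of care is the verification that $R$ genuinely qualifies as a dualizing module under the paper's convention, i.e.\ that a Gorenstein ring of finite Krull dimension satisfies $\id_R(R) < \infty$, after which the corollary is immediate. One could alternatively phrase the finiteness conditions through parts (iii) and (iv) of Corollary 4.18, but parts (i) and (ii) already deliver exactly the form stated here.
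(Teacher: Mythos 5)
Your proposal is correct and matches the paper's intent exactly: the paper states this corollary without proof immediately after Corollary 4.18, as the specialization $C = R$, and your verification that a Gorenstein ring of finite Krull dimension makes $R$ itself a dualizing module (so that $\mathcal{I}_R = \mathcal{I}$, $\mathcal{F}_R = \mathcal{F}$, $R$-$\id = \id$, $R$-$\fd = \fd$) is precisely the translation needed. Nothing is missing.
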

\begin{defn}
\emph{Let $ M $ be an $ R $-module. We say that an $R$-module $ M $ has $ C $-\textit{copure injective dimension} at most $ n $, denoted
	$ C $-$ \cid_R(M) \leq n $, if there is an exact sequence of $ R $-modules $ 0 \rightarrow M \rightarrow E^0 \rightarrow E^1 \rightarrow \cdots \rightarrow E^n \rightarrow 0 $ with each $ E^i $ strongly $ C $-copure
	injective. If there is no shorter such sequence, we set $ C $-$ \cid_R(M) = n$. Dually, we say that $ M $ has $ C $-\textit{copure flat dimension} (resp. $ C $-\textit{copure projective dimension}) at most $ m $, denoted
	$ C $-$ \cfd_R(M) \leq m $ (resp. $ C $-$ \cpd_R(M) \leq m $), if there is an exact sequence of $ R $-modules $ 0 \rightarrow F_m \rightarrow \cdots \rightarrow F_1 \rightarrow F_0  \rightarrow M \rightarrow 0 $ with each $ F_i $ strongly $ C $-copure
	flat (resp. strongly $ C $-copure
	projective). If there is no shorter such sequence, we set $ C $-$ \cfd_R(M) = m$ (resp. $ C $-$ \cpd_R(M) = m)$.}
\end{defn}

\begin{rem}
\emph{From the above definition, it easily follows that there are equalities $ C $-$ \cid_R(M) = \sup \{ n \geq 0 | \Ext^n_R(\Hom_R(C,E) , M) \neq 0 $, where $E$ is an injective $R$-module$\}$, $ C $-$ \cfd_R(M) = \sup \{ n \geq 0 | \Tor^R_n(\Hom_R(C,E) , M) \neq 0 $, where $E$ is an injective $R$-module$\}$ and $ C $-$ \cpd_R(M) = \sup \{ n \geq 0 | \Ext^n_R(M , C \otimes_R F) \neq 0 $, where $F$ is an flat $R$-module$\}$ for any $R$-module $M$. Also if $ 0 \rightarrow M \rightarrow N \rightarrow L \rightarrow 0 $ is an exact sequence of $ R $-modules and two of these three modules have finite $ C $-copure injective dimension (resp. $ C $-copure flat dimension and $ C $-copure projective dimension), then so has the third.}
\end{rem}
The following proposition compares the new homological dimension of Definition 4.20 with those which are appeared in \cite{TW} and \cite{HJ}.
\begin{prop}
	For any $R$-module $M$, the following statements hold true:
	\begin{itemize}
		\item[(i)]{ One has $ C\emph{-\cid}_R(M) \leq \emph{G}_C\emph{-\id}_R(M) \leq C\emph{-\id}_R(M) $. Moreover, the equality holds provided that $ C\emph{-\id}_R(M) < \infty $.}
		\item[(ii)] {One has $ C\emph{-\cpd}_R(M) \leq \emph{G}_C\emph{-\pd}_R(M) \leq C\emph{-\pd}_R(M) $. Moreover, the equality holds provided that $ C\emph{-\pd}_R(M) < \infty $.}
		\item[(iii)] {One has $ C\emph{-\cfd}_R(M) \leq \emph{G}_C\emph{-\fd}_R(M) \leq C\emph{-\fd}_R(M) $. Moreover, the equality holds provided that $ C\emph{-\fd}_R(M) < \infty $.}	
	\end{itemize}	
\end{prop}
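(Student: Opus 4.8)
The three chains of inequalities are immediate from inclusions of classes and require no finiteness hypothesis. Indeed $\mathcal{I}_C \subseteq \mathcal{GI}_C \subseteq \mathcal{M}^C_{\sci}$, where the first inclusion is part of Definition 2.4 and the second is recorded in Notation 4.2. Hence every $\mathcal{I}_C$-coresolution of $M$ is a $\mathcal{GI}_C$-coresolution, which gives $\mathrm{G}_C$-$\id_R(M) \leq C$-$\id_R(M)$, and every $\mathcal{GI}_C$-coresolution is a coresolution by strongly $C$-copure injective modules, which gives $C$-$\cid_R(M) \leq \mathrm{G}_C$-$\id_R(M)$ by Definition 4.20. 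The same argument with $\mathcal{P}_C \subseteq \mathcal{GP}_C \subseteq \mathcal{M}^C_{\scp}$ and $\mathcal{F}_C \subseteq \mathcal{GF}_C \subseteq \mathcal{M}^C_{\scf}$ produces the chains in (ii) and (iii).

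For the equality in (i), suppose $C$-$\id_R(M) = n < \infty$, so that $M \in \mathcal{A}_C(R)$ and all three quantities are already $\leq n$; it then suffices to prove $C$-$\id_R(M) \leq C$-$\cid_R(M)$. Set $m := C$-$\cid_R(M)$ (so $m \leq n < \infty$). The plan is to fix an $\mathcal{I}_C$-coresolution $0 \rightarrow M \rightarrow \Hom_R(C,I^0) \rightarrow \cdots$ and to examine its $m$-th cosyzygy $Z$. Since $M \in \mathcal{A}_C(R)$, Lemma 2.10(ii) tells us this coresolution is exact and $(C \otimes_R -)$-exact, so applying $C \otimes_R -$ yields an injective coresolution of $C \otimes_R M$ whose $m$-th cosyzygy is $C \otimes_R Z$; in particular $\id_R(C \otimes_R Z) < \infty$, i.e. $C$-$\id_R(Z) < \infty$ by Lemma 2.5(i). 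On the other hand, as each $\Hom_R(C,I^j) \in \mathcal{M}^C_{\sci}$, a standard dimension shift along the cosyzygies gives $\Ext^i_R(\mathcal{I}_C , Z) \cong \Ext^{i+m}_R(\mathcal{I}_C , M)$ for all $i \geq 1$, and the right side vanishes because $C$-$\cid_R(M) = m$; hence $Z \in \mathcal{M}^C_{\sci}$. Now Theorem 4.13(i) applies to $Z$ and forces $Z \in \mathcal{I}_C$, so the truncated sequence $0 \rightarrow M \rightarrow \Hom_R(C,I^0) \rightarrow \cdots \rightarrow \Hom_R(C,I^{m-1}) \rightarrow Z \rightarrow 0$ is a $C$-injective coresolution of length $m$. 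Thus $C$-$\id_R(M) \leq m$, and equality propagates through the whole chain.

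Part (ii) is dual. Assuming $C$-$\pd_R(M) = n < \infty$ we have $M \in \mathcal{B}_C(R)$, and I would run the same scheme with a $\mathcal{P}_C$-resolution, which is $\Hom_R(C,-)$-exact by Lemma 2.10(ii). Writing $m := C$-$\cpd_R(M)$ and letting $Z$ be the $m$-th syzygy, the dimension shift $\Ext^i_R(Z, \mathcal{F}_C) \cong \Ext^{i+m}_R(M, \mathcal{F}_C) = 0$ for $i \geq 1$ shows $Z \in \mathcal{M}^C_{\scp}$, while $Z$ inherits finite $C$-projective dimension from $M$ (apply $\Hom_R(C,-)$ and use Lemma 2.5(ii)); Theorem 4.13(ii) then gives $Z \in \mathcal{P}_C$ and $C$-$\pd_R(M) \leq m$.

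For part (iii) there is no flat analogue of Theorem 4.13 at hand, so I would reduce to (i) via an injective cogenerator $E$. Using the isomorphism $\Ext^i_R(\Hom_R(C,I), \Hom_R(M,E)) \cong \Hom_R(\Tor^R_i(\Hom_R(C,I), M), E)$ and the faithfulness of $E$ one obtains $C$-$\cfd_R(M) = C$-$\cid_R(\Hom_R(M,E))$, and from $C \otimes_R \Hom_R(M,E) \cong \Hom_R(\Hom_R(C,M),E)$ together with Lemma 2.5 one gets $C$-$\fd_R(M) = C$-$\id_R(\Hom_R(M,E))$. When $C$-$\fd_R(M) < \infty$, part (i) applied to $\Hom_R(M,E)$ yields $C$-$\cid_R(\Hom_R(M,E)) = C$-$\id_R(\Hom_R(M,E))$, whence $C$-$\cfd_R(M) = C$-$\fd_R(M)$, and the sandwich $C$-$\cfd_R(M) \leq \mathrm{G}_C$-$\fd_R(M) \leq C$-$\fd_R(M)$ collapses all three. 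The main obstacle throughout is the second paragraph's double requirement: one must verify that the truncating (co)syzygy lies in the strongly $C$-copure class \emph{and} has finite $C$-injective (resp. $C$-projective) dimension, since it is precisely this conjunction that unlocks Theorem 4.13; the two facts come from the dimension shift and from the $(C \otimes_R -)$-exactness (resp. $\Hom_R(C,-)$-exactness) supplied by the Auslander/Bass membership via Lemma 2.10(ii).
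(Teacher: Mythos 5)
Your argument is correct, and for the ``moreover'' clauses it takes a genuinely different route from the paper. The chains of inequalities are obtained identically in both proofs, from $\mathcal{I}_C \subseteq \mathcal{GI}_C \subseteq \mathcal{M}^C_{\sci}$ and its two analogues. For the equality, the paper argues in the opposite direction to you: it shows directly that $C\mbox{-}\cid_R(M) \geq n := C\mbox{-}\id_R(M)$ by producing a witness in top degree. Concretely, since $\id_R(C\otimes_R M)=n$ there is a $K$ with $\Ext^n_R(K, C\otimes_R M)\neq 0$; embedding $K$ into an injective $I$ with cokernel $L$ and using $\Ext^{n+1}_R(L, C\otimes_R M)=0$ transfers the nonvanishing to $\Ext^n_R(I, C\otimes_R M)$, and the Foxby-equivalence isomorphisms of \cite[Theorem 4.1, Corollary 4.2]{TW} convert this into $\Ext^n_R(\Hom_R(C,I),M)\neq 0$ with $\Hom_R(C,I)\in\mathcal{I}_C$; parts (ii) and (iii) are handled the same way, with the relative-Tor isomorphisms of \cite{STWY} doing the work in (iii). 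You instead prove $C\mbox{-}\id_R(M)\leq C\mbox{-}\cid_R(M)$ by truncating an $\mathcal{I}_C$-coresolution at the $m$-th cosyzygy $Z$, checking $Z\in\mathcal{M}^C_{\sci}$ by dimension shifting (which legitimately uses $\Ext^{i>0}_R(\mathcal{I}_C,\mathcal{I}_C)=0$, available from the recorded inclusion $\mathcal{I}_C\subseteq\mathcal{GI}_C\subseteq\mathcal{M}^C_{\sci}$) and $C\mbox{-}\id_R(Z)<\infty$ via the $(C\otimes_R-)$-exactness of Lemma 2.10(ii), and then invoking Theorem 4.13; your reduction of (iii) to (i) by dualizing into an injective cogenerator avoids the \cite{STWY} machinery altogether. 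The paper's method has the merit of exhibiting an explicit nonvanishing $\Ext$ or $\Tor$ in the top degree and keeping the three parts independent; yours is more structural, reuses Theorem 4.13 as the engine, and gets (iii) for free from (i), at the cost of the bookkeeping you flag at the end (identifying $C\otimes_R Z$ as the correct cosyzygy and verifying both hypotheses of Theorem 4.13). Both are complete proofs.
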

\begin{proof}
	(i). The containments $ \mathcal{I}_C \subseteq \mathcal{GI}_C \subseteq \mathcal{M}^C_{\sci} $ show the inequalities. Now assume that
	$ C$-$\id_R(M) = n < \infty $. Hence by Lemma 2.5(i), we have $ \id_R(C \otimes_R M) = n $ and so there exists a non-zero $R$-module $K$ for which $ \Ext_R^n(K, C \otimes_R M) \neq 0 $. There exists an exact sequence \\
	\centerline{$ 0 \rightarrow K \rightarrow I \rightarrow L \rightarrow 0 $, $ (*) $}
	in which $I$ is injective and $ L = \coker(K \rightarrow I) $. We show that $  \Ext_R^n(\Hom_R(C,I), M) \neq 0  $. Note that both $ I $ and $ C \otimes_R M $ are in $ \mathcal{B}_C(R) $ and that $ M \cong \Hom_R(C , C \otimes_R M) $ since $ M \in \mathcal{A}_C(R) $. Hence we have the isomorphisms
	\[\begin{array}{rl}
	\Ext_R^n(\Hom_R(C,I), M) &\cong \Ext_R^n(\Hom_R(C,I), \Hom_R(C , C \otimes_R M))\\
	&\cong\Ext_{\mathcal{P}_C}^n(I , C \otimes_R M)\\
	&\cong\Ext_R^n(I , C \otimes_R M),\\
	\end{array}\]
	where the second isomorphism is from \cite[Theorem 4.1]{TW} and the last one is from \cite[Corollary 4.2]{TW}. Hence we need only to show that $ \Ext_R^n(I , C \otimes_R M) \neq 0 $. To do this, apply the functor $ \Hom_R(- , C \otimes_R M) $ to the exact sequence $ (*) $ to get an exact sequence \\
	\centerline{$ \Ext_R^n(I , C \otimes_R M) \rightarrow \Ext_R^n(K , C \otimes_R M) \rightarrow \Ext_R^{n+1}(L , C \otimes_R M) = 0 $.}
	This completes the proof.
	
	(ii). The containments $ \mathcal{P}_C \subseteq \mathcal{GP}_C \subseteq \mathcal{M}^C_{\scp} $ show the inequalities. Now assume that
	$ C$-$\pd_R(M) = n < \infty $. Hence by Lemma 2.5(ii), we have $ \pd_R(\Hom_R(C,M)) = n $ and so there exists a non-zero $R$-module $K$ for which $ \Ext_R^n(\Hom_R(C,M) , K) \neq 0 $. There exists an exact sequence \\
	\centerline{$ 0 \rightarrow L \rightarrow P \rightarrow K \rightarrow 0 $, $ (**) $}
	in which $P$ is projective and $ L = \ker(P \rightarrow K) $. We show that $  \Ext_R^n(M, C \otimes_R P) \neq 0  $. Note that both $ P $ and $ \Hom_R(C,M) $ are in $ \mathcal{A}_C(R) $ and that $ M \cong C \otimes_R \Hom_R(C, M) $ since $ M \in \mathcal{B}_C(R) $. Hence we have the isomorphisms
	\[\begin{array}{rl}
	\Ext_R^n(M, C \otimes_R P) &\cong \Ext_R^n(C \otimes_R \Hom_R(C, M) , C \otimes_R P)\\
	&\cong\Ext_{\mathcal{I}_C}^n(\Hom_R(C, M) , P)\\
	&\cong\Ext_R^n(\Hom_R(C, M) , P),\\
	\end{array}\]
	where the second isomorphism is from \cite[Theorem 4.1]{TW} and the last one is from \cite[Corollary 4.2]{TW}. Hence we need only to show that $ \Ext_R^n(\Hom_R(C, M) , P) \neq 0 $. To do this, apply the functor $ \Hom_R(\Hom_R(C, M) , -) $ to the exact sequence $ (**) $ to get an exact sequence \\
	\centerline{$ \Ext_R^n(\Hom_R(C, M) , P) \rightarrow \Ext_R^n(\Hom_R(C, M) , K) \rightarrow \Ext_R^{n+1}(\Hom_R(C, M) , L) = 0 $.}
	This completes the proof.
	
	(iii). The containments $ \mathcal{F}_C \subseteq \mathcal{GF}_C \subseteq \mathcal{M}^C_{\scf} $ show the inequalities. Now assume that
	$C$-$\fd_R(M) = n < \infty $. Hence by Lemma 2.5(iii), we have $ \fd_R(\Hom_R(C,M)) = n $ and so there exists a non-zero $R$-module $K$ for which $ \Tor^R_n(\Hom_R(C,M) , K) \neq 0 $. There exists an exact sequence \\
	\centerline{$ 0 \rightarrow K \rightarrow I \rightarrow L \rightarrow 0 $,$  (***) $}
	in which $I$ is injective and $ L = \coker(K \rightarrow I) $. We show that $  \Tor^R_n(\Hom_R(C,I), M) \neq 0  $. Note that $ I \in \mathcal{B}_C(R) $ and that $ \Hom_R(C,M) \in \mathcal{A}_C(R) $. Note also that $ M \cong C \otimes_R \Hom_R(C, M) $ since $ M \in \mathcal{B}_C(R) $. Hence we have the isomorphisms
	\[\begin{array}{rl}
	\Tor^R_n(\Hom_R(C,I), M) &\cong \Tor^R_n(\Hom_R(C,I) , C \otimes_R \Hom_R(C, M))\\
	&\cong\Tor^{\mathcal{F}_C}_n(I , \Hom_R(C, M))\\
	&\cong\Tor^R_n(I , \Hom_R(C, M)),\\
	\end{array}\]
	where the second isomorphism is from \cite[Theorem 3.6]{STWY} and the last one is from \cite[Proposition 4.3]{STWY}. Hence we need only to show that $ \Tor^R_n(I , \Hom_R(C, M)) \neq 0 $. To do this, apply the functor $ \Hom_R(C, M) \otimes_R - $ to the exact sequence $ (***) $ to get an exact sequence \\
	\centerline{$0 = \Tor^R_{n+1}(L , \Hom_R(C, M)) \rightarrow \Tor^R_n(K , \Hom_R(C, M)) \rightarrow \Tor^R_n(I , \Hom_R(C, M)) $.}
	This completes the proof.
\end{proof}

The following three propositions are standard.
\begin{prop}
	For an $ R $-module $ M $ and an integer $ n \geq 0 $, the
	following are equivalent:
	\begin{itemize}
		\item[(i)]{$ C\emph{-\cid}_R(M) \leq n $.}
		\item[(ii)]{Every n-th cosyzygy of $ M $ is strongly $ C $-copure injective}
		\item[(iii)]{There exists an exact sequence $ 0 \rightarrow M \rightarrow E^0 \rightarrow E^1 \rightarrow \cdots \rightarrow E^n \rightarrow 0 $ with $ E^i \in \mathcal{M}^C_{\emph{\sci}} $.}
	\end{itemize}
\end{prop}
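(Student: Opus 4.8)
The plan is to route all three conditions through a single \emph{vanishing criterion}, namely that
$$C\text{-}\cid_R(M)\leq n \quad\Longleftrightarrow\quad \Ext^m_R(\mathcal{I}_C,M)=0 \ \text{ for all } m>n.$$
This equivalence is exactly what Remark 4.21 records, since by Definition 4.20 the quantity $C$-$\cid_R(M)$ is computed from the vanishing of $\Ext^{*}_R(\Hom_R(C,E),M)$ over injective $E$, i.e. from $\Ext^{*}_R(\mathcal{I}_C,M)$. Thus (i) is identified with the criterion at once, and moreover (i) and (iii) are \emph{literally the same statement}: Definition 4.20 says $C$-$\cid_R(M)\leq n$ means precisely the existence of an exact sequence $0\to M\to E^0\to\cdots\to E^n\to 0$ whose terms are strongly $C$-copure injective, i.e. lie in $\mathcal{M}^C_{\sci}$. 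Hence the only genuine work is to tie condition (ii) to the criterion.

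First I would set up the dimension-shifting isomorphism along an injective coresolution. Given any $n$-th cosyzygy $L$ of $M$, realized by an exact sequence $0\to M\to I^0\to\cdots\to I^{n-1}\to L\to 0$ with each $I^j$ injective, I would break it into the short exact sequences $0\to K^j\to I^j\to K^{j+1}\to 0$ (with $K^0=M$, $K^n=L$) and apply $\Ext^{*}_R(\mathcal{I}_C,-)$. Because each $I^j$ is injective, $\Ext^{\geq 1}_R(\mathcal{I}_C,I^j)=0$, so the connecting maps give $\Ext^i_R(\mathcal{I}_C,K^{j+1})\cong \Ext^{i+1}_R(\mathcal{I}_C,K^j)$ for all $i\geq 1$. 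Iterating yields $\Ext^i_R(\mathcal{I}_C,L)\cong \Ext^{i+n}_R(\mathcal{I}_C,M)$ for every $i\geq 1$.

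With this isomorphism in hand the equivalence (ii) $\Leftrightarrow$ criterion is immediate. Recall $\mathcal{M}^C_{\sci}=\{\,N\mid \Ext^i_R(\mathcal{I}_C,N)=0 \text{ for all } i\geq 1\,\}$. If the criterion holds, then for any $n$-th cosyzygy $L$ and any $i\geq 1$ we have $\Ext^i_R(\mathcal{I}_C,L)\cong\Ext^{i+n}_R(\mathcal{I}_C,M)=0$ (as $i+n>n$), so $L\in\mathcal{M}^C_{\sci}$; thus \emph{every} $n$-th cosyzygy is strongly $C$-copure injective, which is (ii). Conversely, assuming (ii), I would fix one injective coresolution and its $n$-th cosyzygy $L\in\mathcal{M}^C_{\sci}$; then $\Ext^{i+n}_R(\mathcal{I}_C,M)\cong\Ext^i_R(\mathcal{I}_C,L)=0$ for all $i\geq 1$, i.e. $\Ext^m_R(\mathcal{I}_C,M)=0$ for all $m>n$, recovering the criterion. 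To close the loop with (iii) explicitly, I would note that $L\in\mathcal{M}^C_{\sci}$ together with the fact that injective modules lie in $\mathcal{M}^C_{\sci}$ (they kill $\Ext^{\geq 1}_R(\mathcal{I}_C,-)$) shows the truncation $0\to M\to I^0\to\cdots\to I^{n-1}\to L\to 0$ is a coresolution of length $n$ by strongly $C$-copure injective modules, giving (iii) directly.

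The step I expect to require the most care is the bookkeeping in the two claims hidden inside (ii), namely that the cosyzygy condition must be phrased as holding for \emph{every} choice of cosyzygy, whereas the criterion only needs \emph{one}. The $\Ext^{*}_R(\mathcal{I}_C,-)$ formulation resolves this uniformly, because the invariants $\Ext^{i+n}_R(\mathcal{I}_C,M)$ do not depend on the coresolution; the only genuine subtlety is tracking the index shift $i\mapsto i+n$ and ensuring the hypothesis $i\geq 1$ keeps every intermediate cohomological degree at least $1$ so that the injective terms really do kill the relevant $\Ext$ groups. Beyond that, everything is routine dimension shifting, and no dualizing or finite-dimensionality hypothesis on $R$ is needed.
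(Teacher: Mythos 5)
Your argument is correct and is the standard dimension-shifting proof that the paper itself omits (it simply declares Propositions 4.23--4.25 ``standard''): identifying (i) with (iii) via Definition 4.20, reducing everything to the vanishing criterion $\Ext^{m}_R(\mathcal{I}_C,M)=0$ for $m>n$, and shifting dimensions along an injective coresolution to handle (ii) is exactly the intended route. The only cosmetic point is that the direction (iii) $\Rightarrow$ criterion, which you delegate to Remark 4.21, is itself verified by the very same dimension shifting applied to the $\mathcal{M}^C_{\sci}$-coresolution $0\to M\to E^0\to\cdots\to E^n\to 0$, since its terms also satisfy $\Ext^{\geq 1}_R(\mathcal{I}_C,E^i)=0$.
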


\begin{prop}
	For an $ R $-module $ M $ and an integer $ n \geq 0 $, the
	following are equivalent:
	\begin{itemize}
		\item[(i)]{$ C\emph{-\cpd}_R(M) \leq n $.}
		\item[(ii)]{Every n-th syzygy of $ M $ is strongly $ C $-copure projective.}
		\item[(iii)]{There exists an exact sequence $ 0 \rightarrow P_n \rightarrow \cdots \rightarrow P_1 \rightarrow P_0 \rightarrow M \rightarrow 0 $ with $ P_i \in \mathcal{M}^C_{\emph{\scp}} $.}
	\end{itemize}
\end{prop}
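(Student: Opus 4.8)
The plan begins by noting that the equivalence (i)~$\Leftrightarrow$~(iii) is immediate from Definition 4.20: to say $C$-$\cpd_R(M) \leq n$ is, by definition, to say that there is an exact sequence $0 \rightarrow F_n \rightarrow \cdots \rightarrow F_0 \rightarrow M \rightarrow 0$ with each $F_i$ strongly $C$-copure projective, that is, with each $F_i \in \mathcal{M}^C_{\scp}$. Thus the genuine content is the equivalence of (ii) with these two conditions, and the tool I would use throughout is the $\Ext$-description of this dimension recorded in Remark 4.21, namely
\[ C\text{-}\cpd_R(M) = \sup\{\, j \geq 0 \mid \Ext^j_R(M, C \otimes_R F) \neq 0 \text{ for some flat } R\text{-module } F \,\}. \]

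Next I would establish (iii)~$\Rightarrow$~(ii). Fix a projective resolution $\cdots \rightarrow P_1 \rightarrow P_0 \rightarrow M \rightarrow 0$ and let $K_n = \ker(P_{n-1} \rightarrow P_{n-2})$ be an $n$-th syzygy (the case $n=0$, where $K_0 = M$, being immediate). Since (iii) gives $C$-$\cpd_R(M) \leq n$, the displayed formula forces $\Ext^j_R(M, C \otimes_R F) = 0$ for all $j > n$ and every flat $F$. As the $P_i$ are projective, the standard dimension-shifting isomorphism $\Ext^i_R(K_n, C \otimes_R F) \cong \Ext^{i+n}_R(M, C \otimes_R F)$ holds for every $i \geq 1$, and the right-hand side vanishes because $i+n > n$. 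Hence $\Ext^i_R(K_n, C \otimes_R F) = 0$ for all $i \geq 1$ and all flat $F$, which says precisely that $K_n \in \mathcal{M}^C_{\scp}$.

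For (ii)~$\Rightarrow$~(iii) I would use the same resolution. First observe that $\mathcal{P} \subseteq \mathcal{M}^C_{\scp}$, since any projective $P$ satisfies $\Ext^i_R(P, -) = 0$ for $i \geq 1$. By (ii) the syzygy $K_n$ lies in $\mathcal{M}^C_{\scp}$, so truncation yields the exact sequence $0 \rightarrow K_n \rightarrow P_{n-1} \rightarrow \cdots \rightarrow P_0 \rightarrow M \rightarrow 0$, all of whose terms belong to $\mathcal{M}^C_{\scp}$; this is exactly condition (iii).

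The one delicate point---the main, though mild, obstacle---is that condition (ii) quantifies over \emph{every} $n$-th syzygy, so I must check that it does not depend on the chosen projective resolution. This follows from the generalized Schanuel lemma: any two $n$-th syzygies $K_n, K_n'$ satisfy $K_n \oplus Q \cong K_n' \oplus Q'$ for suitable projectives $Q, Q'$. Since $\mathcal{M}^C_{\scp}$ contains all projectives, is closed under finite direct sums (Proposition 4.3(ii)), and is closed under direct summands (because $\Ext^i_R(A \oplus B, \mathcal{F}_C)$ splits as a direct sum), membership of one $n$-th syzygy in $\mathcal{M}^C_{\scp}$ propagates to all of them. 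Consequently the arguments above are independent of the resolution chosen, and the three conditions are equivalent; the dual statement for $C$-$\cid$ (Proposition 4.23) is obtained by replacing projective resolutions with injective coresolutions and syzygies with cosyzygies.
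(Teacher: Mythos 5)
Your proof is correct; the paper itself offers no argument for this proposition (it merely labels Propositions 4.23--4.25 ``standard''), and your dimension-shifting argument, together with the Schanuel-lemma check that membership in $\mathcal{M}^C_{\scp}$ is independent of the chosen syzygy, is exactly the standard reasoning the paper is alluding to. The only point worth flagging is that the $\Ext$-formula of Remark 4.21 which you invoke is itself essentially equivalent to the proposition, so a fully self-contained write-up would derive the vanishing $\Ext^{j>n}_R(M,C\otimes_R F)=0$ directly by breaking the resolution in (iii) into short exact sequences rather than citing the Remark; this changes nothing of substance.
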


\begin{prop}
	For an $ R $-module $ M $ and an integer $ n \geq 0 $, the
	following are equivalent:
	\begin{itemize}
		\item[(i)]{$ C\emph{-\cfd}_R(M) \leq n $.}
		\item[(ii)]{Every n-th syzygy of $ M $ is strongly $ C $-copure flat.}
		\item[(iii)]{There exists an exact sequence $ 0 \rightarrow F_n \rightarrow \cdots \rightarrow F_1 \rightarrow F_0 \rightarrow M \rightarrow 0 $ with $ F_i \in \mathcal{M}^C_{\emph{\scf}} $.}
	\end{itemize}
\end{prop}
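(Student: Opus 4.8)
The plan is to observe first that conditions (i) and (iii) are, in fact, the same assertion. By Definition 4.20 the inequality $C$-$\cfd_R(M)\le n$ means precisely that $M$ admits an exact sequence $0\to F_n\to\cdots\to F_0\to M\to 0$ all of whose terms are strongly $C$-copure flat, and these are exactly the modules belonging to $\mathcal{M}^C_{\scf}$. So the real content is to tie condition (ii) to these, and I would do this by a dimension-shifting argument anchored on the fact, recorded just after Notation 4.2, that $\mathcal{M}^C_{\scf}$ is resolving. In particular it contains all projectives, is closed under extensions and under kernels of epimorphisms taken inside the class, and, being defined by the vanishing of $\Tor^R_i(\mathcal{I}_C,-)$ for $i\ge 1$, is automatically closed under arbitrary direct sums and direct summands.

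For (ii)$\Rightarrow$(iii) I would fix a projective resolution $\cdots\to P_1\to P_0\to M\to 0$ and truncate it at the $n$-th syzygy $K$, producing an exact sequence $0\to K\to P_{n-1}\to\cdots\to P_0\to M\to 0$. Each $P_i$ lies in $\mathcal{M}^C_{\scf}$ because the class is resolving, while $K\in\mathcal{M}^C_{\scf}$ by hypothesis (ii); this is exactly an exact sequence of the shape demanded by (iii).

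The substance is (iii)$\Rightarrow$(ii). Here I would exploit that every $F\in\mathcal{M}^C_{\scf}$ is acyclic for $\Tor^R(\mathcal{I}_C,-)$, that is, $\Tor^R_i(\mathcal{I}_C,F)=0$ for all $i\ge 1$. Breaking the sequence of (iii) into short exact sequences and shifting dimension, a length-$n$ resolution of $M$ by such acyclic modules forces $\Tor^R_k(\mathcal{I}_C,M)=0$ for every $k>n$. Then, for an $n$-th syzygy $K$ sitting in $0\to K\to P_{n-1}\to\cdots\to P_0\to M\to 0$ with the $P_i$ projective, the standard shift along these short exact sequences gives $\Tor^R_j(\mathcal{I}_C,K)\cong\Tor^R_{j+n}(\mathcal{I}_C,M)=0$ for every $j\ge 1$, whence $K\in\mathcal{M}^C_{\scf}$; thus every $n$-th syzygy is strongly $C$-copure flat.

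The only point needing a word of care is the quantifier ``every'' in (ii): the computation above yields the conclusion for a syzygy arising from a chosen projective resolution, and two $n$-th syzygies of $M$ differ by projective summands via Schanuel's lemma, so closure of $\mathcal{M}^C_{\scf}$ under direct sums and summands transports membership from one to the other. I do not expect a genuine obstacle here—the whole statement is routine derived-functor bookkeeping, as the author signals—so the main thing is to set up the dimension-shifting isomorphisms cleanly and to invoke the resolving property of $\mathcal{M}^C_{\scf}$ at each kernel.
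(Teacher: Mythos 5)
Your proof is correct, and since the paper supplies no argument here (it simply labels Propositions 4.23--4.25 as ``standard''), your dimension-shifting proof is precisely the routine argument the authors intend: (i) and (iii) coincide by Definition 4.20, (ii)$\Rightarrow$(iii) follows from truncating a projective resolution using that $\mathcal{M}^C_{\scf}$ is resolving, and (iii)$\Rightarrow$(ii) follows by shifting $\Tor^R_{*}(\mathcal{I}_C,-)$ along the two resolutions, with Schanuel's lemma and closure under direct sums and summands handling the independence of the choice of syzygy. No gaps.
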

In the following proposition, we show that for a finitely generated $R$-module, there is a special $R$-module for detecting its homological dimensions.
\begin{prop}
	Let $M$ be a finitely generated $R$-module and let $E$ be an injective cogenerator. The following statements hold true:
	\begin{itemize}
		\item[(i)]{If $ C\emph{-\cpd}_R(M) < \infty $, then $ C\emph{-\cpd}_R(M) = \sup \{ i \geq 0 \mid \emph{\Ext}^i_R(M,C) \neq 0 \} $.}
		\item[(ii)]{If $ C\emph{-\cfd}_R(M) < \infty $, then $  C\emph{-\cfd}_R(M) = \sup \{ i \geq 0 \mid \emph{\Tor}_i^R(\emph{\Hom}_R(C,E),M) \neq 0 \} $.}
		\item[(iii)]{If, in addition, $ (R, \fm) $ is complete local and $ C\emph{-\cid}_R(M) < \infty $, then \\
\centerline{ $  C\emph{-\cid}_R(M) = \sup \{ i \geq 0 \mid \emph{\Ext}^i_R(\emph{\Hom}_R(C,E(R/ \fm)),M) \neq 0 \}$.}}
	\end{itemize}	
\end{prop}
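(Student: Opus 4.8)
The plan is to express each of the three dimensions through $\Ext$ or $\Tor$ against a single test module, exploiting that $M$ is finitely generated together with the duality isomorphisms already used in this section. For (i), I would begin with the characterisation $C$-$\cpd_R(M) = \sup\{i \mid \Ext^i_R(M, C \otimes_R F) \neq 0 \text{ for some flat } F\}$ from Remark 4.21. As $M$ is finitely generated over the Noetherian ring $R$, \cite[Theorem 3.2.15]{EJ1} supplies a natural isomorphism $\Ext^i_R(M, C \otimes_R F) \cong \Ext^i_R(M, C) \otimes_R F$ for every flat $F$. Taking $F = R$ shows the right-hand supremum is at most $C$-$\cpd_R(M)$, while the isomorphism shows that $\Ext^i_R(M, C \otimes_R F) \neq 0$ forces $\Ext^i_R(M, C) \neq 0$; hence the two suprema agree. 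This part is routine.

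For (ii), the engine is the duality $\Hom_R(\Ext^i_R(M, C), E') \cong \Tor_i^R(\Hom_R(C, E'), M)$, valid for any injective $E'$ and finitely generated $M$ (this is the isomorphism \cite[Theorem 3.2.13]{EJ1} already used in Proposition 4.6). Since $\Ext^i_R(M, C)$ is finitely generated and $E$ is a cogenerator, one has $\Tor_i^R(\Hom_R(C, E), M) \neq 0 \iff \Ext^i_R(M, C) \neq 0$, and likewise with an arbitrary injective in place of $E$. Therefore both $C$-$\cfd_R(M)$ and the right-hand supremum equal $\sup\{i \mid \Ext^i_R(M, C) \neq 0\}$, which by (i) is $C$-$\cpd_R(M)$; in particular the single module $\Hom_R(C, E)$ detects $C$-$\cfd_R(M)$.

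For (iii) --- the substantial part --- I would first write an arbitrary injective as $E' = \bigoplus_{\fp} E(R/\fp)^{(\mu_\fp)}$. Because $C$ is finitely generated, $\Hom_R(C, -)$ commutes with this direct sum and $\Ext^i_R(-, M)$ converts it into a product, so by Remark 4.21 we obtain $C$-$\cid_R(M) = \sup\{i \mid \Ext^i_R(\Hom_R(C, E(R/\fp)), M) \neq 0 \text{ for some } \fp\}$. The main obstacle is to prove that only $\fp = \fm$ contributes, i.e. that $\Ext^i_R(\Hom_R(C, E(R/\fp)), M) = 0$ for every $i$ when $\fp \neq \fm$. For this I would invoke Matlis duality over the complete local ring $R$: writing $(-)^\vee = \Hom_R(-, E(R/\fm))$ and using $M \cong M^{\vee\vee}$, the isomorphism \cite[Theorem 3.2.1]{EJ1} gives $\Ext^i_R(A, M) \cong \Ext^i_R(A, \Hom_R(M^\vee, E(R/\fm))) \cong (\Tor_i^R(A, M^\vee))^\vee$ for $A = \Hom_R(C, E(R/\fp))$.

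Now $M^\vee$ is Artinian, hence $\fm$-torsion, whereas $A$ is an $R_\fp$-module; since $\fp \subsetneq \fm$ gives $(M^\vee)_\fp = 0$ and $\Tor_i^R(A, M^\vee)$ is $\fp$-local, it equals $\Tor_i^{R_\fp}(A, (M^\vee)_\fp) = 0$, yielding the desired vanishing. Consequently the supremum is attained at $\fp = \fm$, which is exactly the assertion of (iii). The delicate points, and where completeness and the maximality of $\fm$ are genuinely used, are the reflexivity $M \cong M^{\vee\vee}$ and the vanishing $(M^\vee)_\fp = 0$; everything else is bookkeeping with the cited isomorphisms.
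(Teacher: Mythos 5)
Your parts (i) and (ii) are correct and essentially identical to the paper's: the whole content there is the pair of isomorphisms $\Ext^i_R(M,C\otimes_R F)\cong \Ext^i_R(M,C)\otimes_R F$ and $\Hom_R(\Ext^i_R(M,C),I)\cong \Tor_i^R(\Hom_R(C,I),M)$ for finitely generated $M$, exactly as you use them. Part (iii) is also correct, but you take a genuinely different route. You reduce the claim to the vanishing of $\Ext^i_R(\Hom_R(C,E(R/\fp)),M)$ for $\fp\neq\fm$ and prove it by Matlis duality over the complete local ring: $M\cong M^{\vee\vee}$, the swap $\Ext^i_R(A,N^{\vee})\cong \Tor_i^R(A,N)^{\vee}$, and the observation that $\Tor_i^R(A,M^{\vee})$ is simultaneously an $R_\fp$-module (since $A=\Hom_R(C,E(R/\fp))$ is) and supported only at $\fm$ (since $M^{\vee}$ is Artinian), hence zero because it equals $\Tor_i^{R_\fp}(A,(M^{\vee})_\fp)=0$. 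The paper instead follows Fossum--Foxby--Griffith--Reiten: it realizes $M\cong\varprojlim M/\fm^jM$ through a short exact sequence of products of finite-length modules and kills $\Ext^i_R(X,\prod_j M/\fm^jM)$ by noting that multiplication by $x\in\fm\smallsetminus\fp$ acts as an isomorphism through the first variable and nilpotently through each finite-length factor. Both arguments use completeness in equivalent, essential ways ($M\cong\hat{M}$ versus $M\cong M^{\vee\vee}$); yours trades the inverse-limit bookkeeping for standard Matlis duality and has the merit of making explicit the decomposition $I=\bigoplus E(R/\fp)$ and the passage from $\Ext$ of a direct sum to a product of $\Ext$'s (using that $\Hom_R(C,-)$ preserves direct sums because $C$ is finitely generated), steps the paper leaves implicit.
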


\begin{proof}
	(i). Suppose that $ C$-$\cpd_R(M) = n < \infty $. Then there exists a flat $R$-module $F$ for which $ \Ext^n_R(M, C \otimes_R F) \neq 0 $. But Since $M$ is finitely generated, by  \cite[Theorem 3.2.15]{EJ1}, we have the isomorphism \\
	\centerline{$ \Ext^n_R(M, C) \otimes_R F \cong\Ext^n_R(M, C \otimes_R F) \neq 0 $,}
	whence $ \Ext^n_R(M, C) \neq 0 $. This proves the inequality '$ \leq $'. The inequality '$ \geq $' is trivial.

	(ii). Suppose that $ C$-$\cfd_R(M) = n < \infty $. Then there exists an injective $R$-module $I$ for which $ \Tor_n^R(\Hom_R(C,I),M) \neq 0 $.
	But Since $M$ is finitely generated, we have the isomorphism \cite[Theorem 3.2.13]{EJ1} \\
	\centerline{$ \Hom(\Ext^n_R(M, C) , I) \cong \Tor_n^R(\Hom_R(C,I),M) \neq 0 $,}
	which implies that $ \Ext^n_R(M, C) \neq 0 $. Therefore using \cite[Theorem 3.2.13]{EJ1} once more, we get \\
	\centerline{$ \Tor_n^R(\Hom_R(C,E),M) \cong \Hom(\Ext^n_R(M, C) , E) \neq 0 $.}
	This proves the inequality '$ \leq $'. The inequality '$ \geq $' is trivial.
	
	(iii). Suppose that $ C$-$\cid_R(M) = n < \infty $. Then there exists an injective $R$-module $I$ for which $ \Ext^n_R(\Hom_R(C,I),M) \neq 0 $.
	We show that the maximal ideal $\fm$ occurs in the decomposition of $I$. There is an exact sequence (See the proof of \cite[Proposition 2.2]{FFGR}) \\
	\centerline{$  0 \rightarrow  \underset{ \underset {j \in J} \longleftarrow } \lim (M/ \fm^j M) \rightarrow \prod_{j \in J} (M/ \fm^j M) \rightarrow \prod_{j \in J} (M/ \fm^j M) \rightarrow 0 $. (*)}
	Since $M$ is finitely generated and hence is complete, we have $ M \cong \underset{ \underset {i \in J} \longleftarrow } \lim (M/ \fm^j M) $. Choose a prime ideal $ \fp \neq \fm $ and set $ X = \Hom_R(C,E(R/ \fp)) $. The sequence (*) yields an exact sequence \\
	\centerline{$ \cdots \rightarrow \Ext^i_R(X , \prod_{j \in J} (M/ \fm^j M)) \rightarrow \Ext^{i+1}_R(X , M) \rightarrow \Ext^{i+1}_R(X , \prod_{j \in J} (M/ \fm^j M)) \rightarrow \cdots $.}
	We know that the multiplication of any element $ x \in \fm \smallsetminus \fp $ induces an isomorphism on $ E(R/ \fp) $ and hence on $X$. But Since
	$ M/ \fm^j M $ is of finite length for all $ j \in J$, there exists an integer $ n_j $ for which $ x^{n_j}(M/ \fm^j M) = 0 $. Finally since the $\Ext$ functor is linear, it follows that a multiplication of $x$ on $ \Ext^i_R(X , \prod_{j \in J} M/ \fm^j M) $ is both an isomorphism and locally nilpotent for all $ i \geq 0 $. Therefore $ \Ext^i_R(X , \prod_{j \in J} M/ \fm^j M) = 0 $ for all $ i \geq 0 $ and so
	$ \Ext^i_R(X , M) = 0 $ for all $ i \geq 0 $. Consequently, if $ \Ext^n_R(\Hom_R(C,I),M) \neq 0 $, then $ \Ext^n_R(\Hom_R(C,E(R/ \fm)),M) \neq 0 $.  This proves the inequality '$ \leq $'. The inequality '$ \geq $' is trivial.
\end{proof}
The following theorem is a generalization of \cite[Theorem 4.1]{EJ3}.
\begin{thm}\label{A2}
The following are equivalent for an integer $n \geq 0$:
	\begin{itemize}
		\item[(i)]{ $\emph{\id}_R(C) \leq n$.}
		\item[(ii)] {$ C $-$\emph{\cid}_R(M) \leq n$ for any $R$-module $M$.}
		\item[(iii)] {Every $n$-th cosyzygy of an $R$-module is strongly $C$-copure injective.}	
		\item[(iv)] {$ C $-$\emph{\cpd}_R(M) \leq n$ for any $R$-module $M$.}
		\item[(v)] {Every $n$-th syzygy of any $R$-module is strongly $C$-copure projective.}
		\item[(vi)] {$ C $-$\emph{\cfd}_R(M) \leq n$ for any $R$-module $M$.}
		\item[(vii)] {Every $n$-th syzygy of an $R$-module is strongly $C$-copure flat.}	
	\end{itemize}
\end{thm}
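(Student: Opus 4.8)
The plan is to collapse the seven conditions to three uniform homological bounds and then tie each bound to $\id_R(C)$. Using the $\Ext$/$\Tor$ formulas recorded in Remark 4.21 together with the syzygy characterizations of Propositions 4.24, 4.25 and 4.26, I would first note that (ii)$\Leftrightarrow$(iii), (iv)$\Leftrightarrow$(v) and (vi)$\Leftrightarrow$(vii). Moreover, quantified over all $R$-modules, condition (ii) says exactly that $\pd_R(\Hom_R(C,E))\le n$ for every injective $E$, condition (vi) that $\fd_R(\Hom_R(C,E))\le n$ for every injective $E$, and condition (iv) that $\id_R(C\otimes_R F)\le n$ for every flat $F$; indeed $\Ext^{>n}_R(\Hom_R(C,E),-)$, $\Tor^R_{>n}(\Hom_R(C,E),-)$ and $\Ext^{>n}_R(-,C\otimes_R F)$ vanish identically precisely when the stated dimension is at most $n$. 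Thus the theorem reduces to showing that each of these three bounds is equivalent to $\id_R(C)\le n$.

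For (i)$\Leftrightarrow$(iv) I would test injective dimension on the cyclic modules $R/\fp$: by \cite[Theorem 3.2.15]{EJ1} there is an isomorphism $\Ext^i_R(R/\fp,C\otimes_R F)\cong\Ext^i_R(R/\fp,C)\otimes_R F$ for all $i$ and all flat $F$, so $\id_R(C)\le n$ forces $\Ext^{>n}_R(R/\fp,C\otimes_R F)=0$ and hence $\id_R(C\otimes_R F)\le n$; the converse is immediate on setting $F=R$. For (i)$\Leftrightarrow$(vi) I would use the duality \cite[Theorem 3.2.13]{EJ1}, namely $\Tor^R_i(\Hom_R(C,E),M)\cong\Hom_R(\Ext^i_R(M,C),E)$ for finitely generated $M$ and injective $E$. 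If $\id_R(C)\le n$ then $\Ext^{>n}_R(M,C)=0$ for every finitely generated $M$, whence $\Tor^R_{>n}(\Hom_R(C,E),M)=0$ and $\fd_R(\Hom_R(C,E))\le n$ for all $E$ (flat dimension being detected on finitely generated modules). Conversely, choosing $E$ to be an injective cogenerator, the vanishing of $\Tor^R_{n+1}(\Hom_R(C,E),M)$ forces $\Ext^{n+1}_R(M,C)=0$ for all finitely generated $M$, so $\id_R(C)\le n$.

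It remains to connect the projective-dimension bound (ii) with $\id_R(C)$, and this is where the main obstacle lies. One direction is free: since $\fd_R(\Hom_R(C,E))\le\pd_R(\Hom_R(C,E))$, condition (ii) implies (vi) and hence $\id_R(C)\le n$. The substantive direction is (i)$\Rightarrow$(ii), which is delicate because in general finite flat dimension does not force finite projective dimension. The point that rescues it is that a semidualizing module of finite injective dimension is dualizing, so the hypothesis $\id_R(C)\le n<\infty$ already forces $R$ to have finite Krull dimension, with $\dim R=\id_R(C)\le n$. Granting this, I would argue as follows: from (i)$\Leftrightarrow$(vi) we have $\fd_R(\Hom_R(C,E))\le n<\infty$ for every injective $E$, and then \cite[Corollary 3.4]{F1} upgrades finite flat dimension to $\pd_R(\Hom_R(C,E))\le\dim R\le n$, which is exactly (ii). Assembling the chain (i)$\Leftrightarrow$(iv)$\Leftrightarrow$(vi)$\Leftrightarrow$(ii) with the three syzygy equivalences from the first paragraph then completes the proof. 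The delicate step to get right is precisely the passage from flat to projective dimension, which hinges entirely on extracting a finite Krull dimension bounded by $n$ out of the finiteness of $\id_R(C)$.
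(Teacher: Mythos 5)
Your proposal is correct and follows essentially the same route as the paper's proof: both reduce the seven conditions to the bounds $\pd_R(\Hom_R(C,E))\le n$, $\fd_R(\Hom_R(C,E))\le n$ and $\id_R(C\otimes_R F)\le n$, handle the syzygy conditions by dimension shifting, recover (i) by specializing to $F=R$ (resp.\ to an injective cogenerator), and pass from flat to projective dimension via \cite[Corollary 3.4]{F1}. If anything you are more explicit than the paper at the one delicate point --- that $\id_R(C)\le n$ forces $\dim R\le n$ so that Foxby's result applies --- which the paper invokes only implicitly through the phrase ``since $C$ is dualizing''.
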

\begin{proof}
(i) $\Longrightarrow$ (ii). Assume that $M$ is a non-zero $R$-module. Since $C$ is dualizing, by Lemma 2.5(iii),  we have
$ \fd_R(\Hom_R(C,I)) \leq n $ for any injective $R$-module $I$. Hence, by \cite[Corollary 3.4]{F1}, we have 
$ \pd_R(\Hom_R(C,I)) \leq n $. Thus $\Ext^i_R(\mathcal{I}_C , M) = 0 $ for all $i > n$. So that $ C $-$\cid_R(M) \leq n$.

(ii) $\Longleftrightarrow$ (iii). First assume that $M$ is an $R$-module. Choose an arbitrary coresolution for $M$ by strongly $C$-copure injective modules and let $K_n$ be $n$-th cosyzygy. Use dimension shifting and the fact that $ \Ext_R^{i>0}(\mathcal{I}_C , \mathcal{M}^C_{\sci}) = 0 $, to see that $ \Ext_R^i(\mathcal{I}_C , K_n) \cong \Ext_R^{i+n}(\mathcal{I}_C , M) = 0 $ for all $ i > 0 $. The converse is evident.

(iv) $\Longrightarrow$ (i). By assumption, for any $R$-module $M$ we have we have $ \Tor_i^R(\mathcal{I}_C , M) = 0 $ for all $i > n$. consequently, all modules in $ \mathcal{I}_C $ have finite flat dimensions less than or equal to $n$, whence $\id_R(C) \leq n$.
	
(ii) $\Longrightarrow$ (i).  Is similar to that of (iv) $\Longrightarrow$ (i).

(i) $\Longrightarrow$ (vi). Since $\id_R(C) \leq n$, all modules in $ \mathcal{F}_C $ have finite injective dimensions less than or equal to $n$. Therefore if $M$ is an $R$-module, then $ \Ext^i_R(M , \mathcal{F}_C) = 0 $ for all $ i > n $, whence $ C $-$ \cpd_R(M) \leq n $.

(vi) $\Longrightarrow$ (i). By assumption, $  \Ext^i_R(M , \mathcal{F}_C) = 0 $ for any non-zero $R$-module $ M $ and any $ i > n $. In particular, $ \Ext^i_R(M , C) = 0 $  for any $R$-module $ M $ and any $ i > n $, whence $ \id_R(C) \leq n $.

(iv) $\Longleftrightarrow$ (v) and (vi) $\Longleftrightarrow$ (vii) are similar to that of (ii) $\Longleftrightarrow$ (iii).		
\end{proof}
\begin{cor}
	The following are equivalent:
	\begin{itemize}
		\item[(i)]{ $\emph{\id}_R(C) \leq 1$.}
		\item[(ii)] {Any quotient of a strongly $C$-copure injective $ R $-module is strongly $C$-copure injective.}	
		\item[(iii)] {Any submodule of a strongly $C$-copure projective $ R $-module is strongly $C$-copure projective.}
		\item[(iv)] {Any submodule of a strongly $C$-copure flat $ R $-module is strongly $C$-copure flat.}	
	\end{itemize}
\end{cor}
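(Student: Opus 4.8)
The plan is to read the corollary off from the case $n=1$ of Theorem 4.27, by reinterpreting its conditions (iii), (v) and (vii) as the closure properties (ii), (iii) and (iv). The preliminary step I would record is that $\mathcal{I} \subseteq \mathcal{M}^C_{\sci}$, $\mathcal{P} \subseteq \mathcal{M}^C_{\scp}$ and $\mathcal{F} \subseteq \mathcal{M}^C_{\scf}$: each is immediate from the vanishing of $\Ext^{i}_R(-,I)$ for an injective $I$, of $\Ext^{i}_R(P,-)$ for a projective $P$, and of $\Tor^R_{i}(-,F)$ for a flat $F$ (all $i\geq 1$). Consequently every $R$-module embeds into a member of $\mathcal{M}^C_{\sci}$, namely its injective hull, and is a quotient of a free---hence strongly $C$-copure projective and strongly $C$-copure flat---module; so first cosyzygies relative to $\mathcal{M}^C_{\sci}$ and first syzygies relative to $\mathcal{M}^C_{\scp}$ and $\mathcal{M}^C_{\scf}$ always exist.

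Next I would match the closure conditions with the (co)syzygy conditions. For (ii): a first cosyzygy of $M$ is the cokernel of an embedding $0\to M\to E$ with $E\in\mathcal{M}^C_{\sci}$, while conversely any quotient $E/L$ of a module $E\in\mathcal{M}^C_{\sci}$ is precisely the first cosyzygy of the submodule $L\hookrightarrow E$. Thus ``every first cosyzygy is strongly $C$-copure injective'' and ``every quotient of a strongly $C$-copure injective module is strongly $C$-copure injective'' are one and the same statement---that $B\in\mathcal{M}^C_{\sci}$ whenever $0\to A\to E\to B\to 0$ is exact with $E\in\mathcal{M}^C_{\sci}$. By the equivalence of (i) and (iii) in Theorem 4.27 (taken at $n=1$) this is equivalent to $\id_R(C)\leq 1$, which yields (i)$\Leftrightarrow$(ii). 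The cases (iii) and (iv) are handled symmetrically: a first syzygy of $M$ is the kernel of a surjection $P\to M$ with $P$ in $\mathcal{M}^C_{\scp}$ (resp. $\mathcal{M}^C_{\scf}$), and every submodule $L$ of such a $P$ is the first syzygy of $P/L$; hence conditions (iii) and (iv) of the corollary coincide with ``every first syzygy is strongly $C$-copure projective'' and ``$\dots$ strongly $C$-copure flat'', which by the equivalences (i)$\Leftrightarrow$(v) and (i)$\Leftrightarrow$(vii) of Theorem 4.27 at $n=1$ are again equivalent to $\id_R(C)\leq 1$.

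Combining the three equivalences gives (i)$\Leftrightarrow$(ii)$\Leftrightarrow$(iii)$\Leftrightarrow$(iv). The only delicate point---and the step I would treat most carefully---is the interchange of ``first (co)syzygy'' with ``sub/quotient of a member of the class''; this is exactly where the preliminary step is used, guaranteeing that an arbitrary submodule or quotient can be realised as a genuine syzygy or cosyzygy. Beyond this bookkeeping the corollary is a direct specialisation of Theorem 4.27 to $n=1$, so I expect no substantial obstacle.
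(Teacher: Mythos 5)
Your proof is correct and is exactly the route the paper intends: the corollary is stated there without proof as the case $n=1$ of Theorem 4.27, and your translation of ``every first cosyzygy (resp.\ syzygy) lies in the class'' into ``the class is closed under quotients (resp.\ submodules)'' is precisely the bookkeeping that justifies this. The preliminary observation that $\mathcal{I}\subseteq\mathcal{M}^C_{\sci}$, $\mathcal{P}\subseteq\mathcal{M}^C_{\scp}$ and $\mathcal{P}\subseteq\mathcal{M}^C_{\scf}$, so that arbitrary sub- and quotient modules are realised as genuine (co)syzygies, is the right point to make explicit.
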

The following definition can be considered as a relative global dimension of a ring with respect to a semidualizing module. Our aim from this definition, is that to find a necessary and sufficient condition for $C$ to be dualizing (See Corollary 4.30 below).
\begin{defn}
\emph{Set $ C $-$ \cpD(R) = \sup \{ C\emph{-}\cpd_R(M) | M$ is an $R$-module$\}$ and call  $C$-$ \cpD(R) $ the \textit{global $C$-copure projective dimension} of $R$. The notions of $C$-$ \cfD(R) $ and $C$-$ \ciD(R) $ are defined similarly.}
\end{defn}
\begin{cor}
	Assume that $ \emph{\dim}(R) < \infty $. The following are equivalent:
\begin{itemize}
	\item[(i)]{ $C$ is dualizing.}
	\item[(ii)] {$ C $-$\emph{\ciD}(R) < \infty$.}
	\item[(iii)] {$ C $-$\emph{\cpD}(R) < \infty$.}	
	\item[(iv)] {$ C $-$\emph{\cfD}(R) < \infty$.}
\end{itemize}	
Moreover, if one of the above statements holds, then $ \emph{\id}_R(C) =  C\emph{-\ciD}(R) = C\emph{-\cpD}(R) = C\emph{-\cfD}(R) $.
\end{cor}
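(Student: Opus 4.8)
The plan is to obtain the corollary as a direct consequence of Theorem 4.27, which already packages all the quantitative content. The first step is to reformulate hypothesis (i): under the standing assumption $\dim R < \infty$, a semidualizing module $C$ is dualizing precisely when $\id_R(C) < \infty$, since a semidualizing module of finite injective dimension forces $R$ to be Cohen--Macaulay and therefore matches the definition recalled in Section 2. Thus (i) is equivalent to the purely homological condition $\id_R(C) < \infty$, which is the form in which Theorem 4.27 can be invoked.

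Next I would unwind Definition 4.29. For every integer $n \ge 0$, the equality $C$-$\ciD(R) = \sup\{C$-$\cid_R(M) \mid M\}$ shows that $C$-$\ciD(R) \le n$ holds if and only if $C$-$\cid_R(M) \le n$ for every $R$-module $M$; the analogous reformulations hold for $C$-$\cpD(R)$ and $C$-$\cfD(R)$ in terms of $C$-$\cpd_R$ and $C$-$\cfd_R$. These three pointwise conditions are exactly items (ii), (iv) and (vi) of Theorem 4.27, while $\id_R(C) \le n$ is item (i). Consequently Theorem 4.27 gives, for each fixed $n \ge 0$, that $\id_R(C) \le n$ if and only if $C$-$\ciD(R) \le n$, if and only if $C$-$\cpD(R) \le n$, if and only if $C$-$\cfD(R) \le n$.

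Finally I would upgrade these level-by-level equivalences to an equality of invariants: if two elements of $\N \cup \{\infty\}$ satisfy ``$\le n$'' for exactly the same values of $n \ge 0$, then they are equal, and in particular one is finite if and only if the other is. Applying this to the four quantities at once yields $\id_R(C) = C$-$\ciD(R) = C$-$\cpD(R) = C$-$\cfD(R)$ in $\N \cup \{\infty\}$, which simultaneously establishes the equivalence of (i)--(iv) (via the reformulation of (i)) and the ``moreover'' clause. There is essentially no obstacle in the argument: the only point requiring care is the first step, namely confirming that the notion of dualizing appearing in the statement coincides with the condition $\id_R(C) < \infty$, so that the homological Theorem 4.27 applies verbatim under the hypothesis $\dim R < \infty$.
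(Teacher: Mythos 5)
Your proof is correct. For the equivalence of (i)--(iv) you do what the paper does: identify ``$C$ is dualizing'' with $\id_R(C)<\infty$ (justifying, as the paper does not bother to, that finite injective dimension of a semidualizing module forces $R$ to be Cohen--Macaulay, so the two conditions really coincide) and invoke Theorem 4.27. Where you genuinely diverge is the ``moreover'' clause. You deduce the equality $\id_R(C)=C$-$\ciD(R)=C$-$\cpD(R)=C$-$\cfD(R)$ purely formally: Theorem 4.27, read at every level $n\ge 0$, shows that the four invariants have identical sets of upper bounds in $\N$, and two elements of $\N\cup\{\infty\}$ with the same upper bounds coincide (all four are $\ge 0$ here since $C\ne 0$, so the degenerate value $-\infty$ causes no trouble). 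The paper instead proves only the inequality $C$-$\ciD(R)\le\id_R(C)$ from the theorem and establishes the reverse inequality by producing an explicit witness: for an injective cogenerator $E$ it argues that $\pd_R(\Hom_R(C,E))=n$ exactly --- if it were $<n$ then $\fd_R(\Hom_R(C,E))<n$ would force $\id_R(C)<n$ --- so some module $M$ satisfies $\Ext^n_R(\Hom_R(C,E),M)\ne 0$. Your route is shorter and needs nothing beyond the stated biconditionals of Theorem 4.27 (in particular the implications (ii)$\Rightarrow$(i), (iv)$\Rightarrow$(i), (vi)$\Rightarrow$(i) at each fixed $n$, which the theorem supplies); the paper's route buys a concrete test module $\Hom_R(C,E)$ realizing the supremum. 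Both arguments are sound.
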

\begin{proof}
The equivalence of the above statements follows from Theorem 4.27. We only show the equality $ \id_R(C) =  C$-$\ciD(R) $. The other equalities can be proved similarly. If $ \id_R(C) = n < \infty $, then as in the proof of Theorem 4.27, all modules in $ \mathcal{I}_C $ have  finite projective dimensions less than or equal to $n$. Consequently, $ \Ext_R^i(\mathcal{I}_C , M) = 0 $ for any $ i > n $ and all $R$-modules $M$. Therefore, we need only to find an $R$-module $M$ with an injective $R$-module $I$ for which $ \Ext_R^n(\Hom_R(C,I) , M) \neq 0 $. Assume that $E$ is an injective cogenerator. Then
 we have $  \pd_R(\Hom_R(C,E)) \leq n $. But if $  \pd_R(\Hom_R(C,E)) < n $, then $  \fd_R(\Hom_R(C,E)) < n $ and so  $ \id_R(C) < n $, which is impossible. Thus $  \pd_R(\Hom_R(C,E) = n $, and so there exists a non-zero $R$-module $M$ for which $ \Ext_R^n(\Hom_R(C,E) , M) \neq 0 $. This gives the equality $ \id_R(C) =  C$-$\ciD(R) $.
\end{proof}
The following theorem examines the finiteness of relative homological dimensions with respect to a semidualizing module in the local case. We see that the residue field of a local ring is a test module for detecting the finiteness of relative homological dimensions.
\begin{thm}\label{A2}
	Let $(R, \fm , k)$ be local with $ \emph{\dim}(R) = d $. The following are equivalent:
	\begin{itemize}
		\item[(i)]{ $C$ is dualizing.}
		\item[(ii)] {$ C $-$\emph{\cid}_R(M) < \infty$ for any $R$-module $M$.}
		\item[(iii)] {$ C $-$\emph{\cid}_R(k) < \infty$.}	
		\item[(iv)] {$ C $-$\emph{\cfd}_R(M) < \infty$ for any $R$-module $M$.}
		\item[(v)] {$ C $-$\emph{\cfd}_R(k) < \infty$.}	
		\item[(vi)] {$ C $-$\emph{\cpd}_R(M) < \infty$ for any $R$-module $M$.}
		\item[(vii)] {$ C $-$\emph{\cpd}_R(k) < \infty$.}	
	\end{itemize}
Moreover, if one of the above statements holds, then $ d =  C\emph{-\cid}(k) = C\emph{-\cpd}(k) = C\emph{-\cfd}(k) $.	
\end{thm}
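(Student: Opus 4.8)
The plan is to run the three implication cycles $(i)\Rightarrow(ii)\Rightarrow(iii)\Rightarrow(i)$, $(i)\Rightarrow(iv)\Rightarrow(v)\Rightarrow(i)$ and $(i)\Rightarrow(vi)\Rightarrow(vii)\Rightarrow(i)$, and then extract the numerical equalities. The implications $(ii)\Rightarrow(iii)$, $(iv)\Rightarrow(v)$, $(vi)\Rightarrow(vii)$ are free, obtained by specializing $M=k$. The three implications out of $(i)$ follow at once from Theorem 4.27: if $C$ is dualizing then $R$ is Cohen--Macaulay and the Bass formula gives $\id_R(C)=\depth R=d<\infty$, so Theorem 4.27 (applied with $n=d$) yields $C$-$\cid_R(M)\le d$, $C$-$\cpd_R(M)\le d$ and $C$-$\cfd_R(M)\le d$ for every $R$-module $M$. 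Hence the real content lies in the three implications $(iii)\Rightarrow(i)$, $(v)\Rightarrow(i)$ and $(vii)\Rightarrow(i)$: one must show that finiteness of a single copure invariant of the residue field already forces $\id_R(C)<\infty$, after which I will invoke the standard fact that a semidualizing module of finite injective dimension is dualizing (and automatically forces $R$ to be Cohen--Macaulay).

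The common engine is the classical identity $\id_R(C)=\sup\{\,i\ge 0\mid \Ext^i_R(k,C)\neq 0\,\}$, valid for the finitely generated module $C$ over the Noetherian local ring $R$ irrespective of finiteness. For $(vii)$, since $k$ is finitely generated, Proposition 4.26(i) gives $C$-$\cpd_R(k)=\sup\{i\mid \Ext^i_R(k,C)\neq 0\}=\id_R(C)$, so finiteness of the left-hand side forces $\id_R(C)<\infty$. For $(v)$, Proposition 4.26(ii) gives $C$-$\cfd_R(k)=\sup\{i\mid \Tor^R_i(\Hom_R(C,E),k)\neq 0\}$ for an injective cogenerator $E$; the isomorphism $\Tor^R_i(\Hom_R(C,E),k)\cong\Hom_R(\Ext^i_R(k,C),E)$ used in the proof of Proposition 4.26(ii) shows, via the cogenerator property, that this supremum again equals $\id_R(C)$.

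The hard part is $(iii)\Rightarrow(i)$, since the matching formula Proposition 4.26(iii) was proved only for complete local rings. I would sidestep completeness by using that $k$ has finite length. Put $(-)^{\vee}=\Hom_R(-,E(R/\fm))$ and note $k^{\vee}\cong k$. Combining the two standard dualities $\Ext^i_R(C^{\vee},k)\cong\Tor^R_i(C^{\vee},k)^{\vee}$ and $\Tor^R_i(C^{\vee},k)\cong\Ext^i_R(k,C)^{\vee}$, coming respectively from \cite[Theorems 3.2.1 and 3.2.13]{EJ1}, I get $\Ext^i_R(\Hom_R(C,E(R/\fm)),k)\cong\big(\Ext^i_R(k,C)^{\vee}\big)^{\vee}$. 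Now $\Ext^i_R(k,C)$ is killed by $\fm$ and finitely generated, hence of finite length, hence Matlis reflexive over the (not necessarily complete) local ring $R$; so the double dual returns $\Ext^i_R(k,C)$ and thus $\Ext^i_R(\Hom_R(C,E(R/\fm)),k)\cong\Ext^i_R(k,C)$. Feeding the single injective $I=E(R/\fm)$ into the formula $C$-$\cid_R(k)=\sup\{i\mid \Ext^i_R(\Hom_R(C,I),k)\neq 0,\ I\ \text{injective}\}$ then yields $C$-$\cid_R(k)\ge\id_R(C)$, so $(iii)$ forces $\id_R(C)<\infty$ and $C$ is dualizing.

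Finally, for the numerical statement assume $C$ dualizing, so $\id_R(C)=\depth R=d$. The computations above already give $C$-$\cpd_R(k)=C$-$\cfd_R(k)=\id_R(C)=d$. For the copure injective invariant the inequality $C$-$\cid_R(k)\ge\id_R(C)=d$ has just been shown, while Theorem 4.27 with $n=d$ gives $C$-$\cid_R(k)\le d$; hence $C$-$\cid_R(k)=d$ too, and $d=C$-$\cid(k)=C$-$\cpd(k)=C$-$\cfd(k)$. I expect the only delicate points to be the appeal to ``semidualizing of finite injective dimension $\Rightarrow$ dualizing'' and the finite-length Matlis reflexivity step that replaces the completeness hypothesis of Proposition 4.26(iii); everything else is dimension shifting together with the duality isomorphisms already used in this section.
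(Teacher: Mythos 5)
Your proof is correct and follows essentially the same route as the paper: the implications out of (i) come from Theorem 4.27, the specializations to $M=k$ are free, and the loops are closed with the Matlis duality isomorphisms of \cite[Theorems 3.2.1 and 3.2.13]{EJ1} applied to $k$ and $E(k)$ together with $k^{\vee}\cong k$ --- the paper merely organizes these as the equivalences (iii)$\Leftrightarrow$(v)$\Leftrightarrow$(vii) followed by (v)$\Rightarrow$(i), whereas you run each of (iii), (v), (vii) back to $\id_R(C)<\infty$ directly (your finite-length reflexivity step is exactly the chaining of the paper's two isomorphisms). You are in fact slightly more careful than the paper in flagging that ``$\id_R(C)<\infty$ implies $C$ dualizing'' rests on the Bass conjecture forcing $R$ to be Cohen--Macaulay, and in actually proving the final numerical equalities, which the paper only asserts.
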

\begin{proof}
	Set $ (-)^{\vee} = \Hom_R(-, E(k))$. The parts (i) $ \Longleftrightarrow $ (ii), (i) $ \Longleftrightarrow $ (iv) and (i) $ \Longleftrightarrow $ (vi) are proved in the Theorem 4.27. Also the parts
(ii) $\Longrightarrow$ (iii), (iv) $\Longrightarrow$ (v) and (vi) $\Longrightarrow$ (vii) are evident.

(iii) $ \Longleftrightarrow $ (v). By \cite[Theorem 3.2.1]{EJ1}, we have the isomorphism $ \Ext_R^i(\mathcal{I}_C , k^{\vee}) \cong \Tor_i^R(\mathcal{I}_C , k)^{\vee} $ for all $i \geq 0$. Now the result follows by the fact that $ k^{\vee} \cong k$.

(v) $ \Longleftrightarrow $ (vii). By \cite[Theorem 3.2.1]{EJ1}, we have the isomorphism $ \Ext_R^i \big(k , \big(\mathcal{I}_C\big)^{\vee} \big) \cong \Tor_i^R(\mathcal{I}_C , k)^{\vee} $ and also by \cite[Theorem  3.2.13]{EJ1}, we have the isomorphism $ \Ext_R^i(k , \mathcal{F}_C)^{\vee} \cong \Tor_i^R \big(k , \big(\mathcal{F}_C\big)^{\vee}\big) $ for all $i \geq 0$. Now just note that $ \big(\mathcal{I}_C\big)^{\vee} \subseteq \mathcal{F}_C $ and that $ \big(\mathcal{F}_C\big)^{\vee} \subseteq \mathcal{I}_C $.
	
(v) $\Longrightarrow$ (i). Assume that $ C $-$\cfd_R(k) = n$. Then, in particular, $\Tor^R_i(\Hom_R(C, E(k)) , k) = 0 $ for all $i > n$. Now the isomorphism\\
\centerline{$\Tor^R_i(\Hom_R(C, E(k)) , k) \cong \Ext^i_R(k , C)^{\vee}$}
	shows that $ \Ext^i_R(k , C) = 0 $ for all $ i > n $, whence $C$ is dualizing.
\end{proof}	
The equivalence of the conditions (i), (ii) and (iii) in the following corollary, can be considered as a generalization of \cite[Theorem 2.7]{KY}. Note that $ \cid_R(M) \leq $G-$\id_R(M) $ for any $R$-module $M$.
\begin{cor}\label{A2}
	Let $(R, \fm , k)$ be local with $\emph{\dim}(R) = d $. The following are equivalent:
	\begin{itemize}
		\item[(i)]{ $R$ is Gorenstein.}
		\item[(ii)] {$\emph{\cid}_R(M) < \infty$ for any $R$-module $M$.}
		\item[(iii)] {$\emph{\cid}_R(k) < \infty$.}	
		\item[(iv)] {$\emph{\cfd}_R(M) < \infty$ for any $R$-module $M$.}
		\item[(v)] {$\emph{\cfd}_R(k) < \infty$.}
		\item[(vi)] {$\emph{\cpd}_R(M) < \infty$ for any $R$-module $M$.}
		\item[(vii)] {$\emph{\cpd}_R(k) < \infty$.}	
	\end{itemize}
	Moreover, if one of the above statements holds, then $ d =  \emph{\cid}(k) = \emph{\cpd}(k) = \emph{\cfd}(k) $.
\end{cor}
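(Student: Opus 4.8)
The plan is to deduce this corollary as the special case $C = R$ of the preceding Theorem 4.31, so the work consists almost entirely of identifying the decorated objects at $C = R$ with their classical counterparts. First I would recall that $R$ is always a semidualizing module over itself (the homothety map $R \rightarrow \Hom_R(R,R)$ is an isomorphism and $\Ext^i_R(R,R) = 0$ for $i > 0$), and that $C = R$ is dualizing precisely when $\id_R(R) < \infty$. For a local ring this last condition is exactly the Gorenstein property, so condition (i) of the present statement ($R$ Gorenstein) matches condition (i) of Theorem 4.31 ($C = R$ dualizing). The hypothesis $\dim(R) = d$ supplies the finiteness needed to invoke that theorem.

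Next I would record that specialization to $C = R$ collapses the relevant classes: $\mathcal{I}_R = \mathcal{I}$, $\mathcal{F}_R = \mathcal{F}$ and $\mathcal{P}_R = \mathcal{P}$, so that $\Hom_R(R,E) \cong E$ and $R \otimes_R F \cong F$. Reading the formulas of Remark 2.22 at $C = R$, each $C$-copure dimension becomes a supremum of nonvanishing $\Ext$ or $\Tor$ taken over \emph{injective} (resp.\ \emph{flat}) test modules, whereas by Remark 2.13 the copure dimensions $\cid$, $\cfd$, $\cpd$ are computed by testing against modules of \emph{finite} injective (resp.\ flat) dimension.

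The one point that genuinely requires argument is that these two suprema coincide, i.e.\ $R$-$\cid_R(M) = \cid_R(M)$, $R$-$\cfd_R(M) = \cfd_R(M)$ and $R$-$\cpd_R(M) = \cpd_R(M)$ for every $M$. The inequality '$\leq$' is immediate, since injective (resp.\ flat) modules have finite injective (resp.\ flat) dimension. For '$\geq$' I would use dimension shifting: a module $E$ with $\id_R(E) = m < \infty$ has a bounded injective coresolution $0 \rightarrow E \rightarrow I^0 \rightarrow \cdots \rightarrow I^m \rightarrow 0$; breaking it into short exact sequences and running the long exact sequence of $\Ext^\ast_R(-,M)$ shows that if $\Ext^n_R(I,M) = 0$ for every injective $I$ and every $n > N$, then $\Ext^n_R(E,M) = 0$ for all $n > N$. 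The analogous shifts handle the other two dimensions: $\Tor^R_\ast(-,M)$ against finite injective dimension test modules for $\cfd$, and $\Ext^\ast_R(M,-)$ against finite flat dimension test modules for $\cpd$.

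With these identifications in place, the equivalence of (i)--(vii) and the final equalities $d = \cid(k) = \cpd(k) = \cfd(k)$ are precisely the corresponding assertions of Theorem 4.31 read at $C = R$; the sharpened comparison with G-injective dimension noted just before the statement is what lets one view the equivalence of (i), (ii), (iii) as a form of the Gorenstein criterion of Khatami--Yassemi. I expect the dimension-shifting verification that $R$-$\cid_R = \cid_R$ and its two analogues to be the only real step, everything else being direct substitution.
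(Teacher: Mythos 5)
Your proof is correct and is essentially the argument the paper intends: the corollary appears with no written proof, being the specialization $C=R$ of Theorem 4.31, and the identification of $R$-$\cid_R$, $R$-$\cfd_R$, $R$-$\cpd_R$ (tested against injective, resp.\ flat, modules) with the classical $\cid_R$, $\cfd_R$, $\cpd_R$ of Remark 2.13 (tested against modules of finite injective, resp.\ flat, dimension) is exactly the routine gap that this specialization leaves to fill. Your dimension-shifting argument along a bounded injective coresolution (resp.\ flat resolution) of the test module closes that gap correctly, so nothing further is needed.
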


H.-B. Foxby and A.J. Frankild in \cite[Theorem 4.5]{FF} showed that a Noetherian local ring $R$ is Gorenstein if and only if there exists a cyclic $R$-module of finite G-injective dimension. It is easy to see that $C$ is dualizing if and only if there exists a cyclic $R$-module of finite G$ _C $-injective dimension. Indeed, if $C$ is dualizing, then $ R \ltimes C $ is a Gorenstein local ring by \cite[Theorem 7]{Re}, and hence G$ _C $-$ \id_R(R) =$Gid$ _{R \ltimes C} (R) < \infty$ by \cite[Theorem 2.16]{HJ}. Conversely, if $M$ is a cyclic $R$-module with G$ _C $-$ \id_R(M) < \infty$, then Gid$ _{R \ltimes C} (M) < \infty$ by \cite[Theorem 2.16]{HJ}, and hence $ R \ltimes C $ is a Gorenstein local ring by \cite[Theorem 4.5]{FF}, whence $C$ is dualizing by \cite[Theorem 7]{Re}. Now it is natural to ask

\begin{Ques}
\emph{ Assume that $ (R, \fm) $ is local and there exists a cyclic $R$-module $M$ with $ C $-$ \cid_R(M) < \infty $. Is then $C$ dualizing?}	
\end{Ques}
\section{A Special Case}
In this section, we focus on those semidualizing $R$-modules which have injective dimensions less than or equal to 1. Our aim is to extend a result of E.Enochs and O.Jenda \cite[Theorem 3.3]{EJ5}
\begin{prop}
Let $\emph{\id}_R(C) \leq 1$ and let $M$ be an $R$-module. The following are equivalent:
	\begin{itemize}
		\item[(i)]{ Any $ \mathcal{I}_C $-precover of $M$ is surjective.}
		\item[(ii)] {$M \in \mathcal{M}^C_{\emph{\ci}} $.}	
	\end{itemize}
\end{prop}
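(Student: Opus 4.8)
The plan is to prove the two implications separately, using throughout that $\id_R(C)\le 1$ (with $C$ semidualizing) forces $C$ to be dualizing, so that $R$ is Cohen--Macaulay with $\dim R\le 1$. Two consequences will be invoked repeatedly. First, by Lemma 2.5(iii) every $\Hom_R(C,I)$ with $I$ injective has $\fd_R\Hom_R(C,I)\le\id_R(C)\le 1$, whence $\pd_R\Hom_R(C,I)\le 1$ by \cite[Corollary 3.4]{F1}; thus $\Ext^i_R(\mathcal{I}_C,-)=0$ for all $i\ge 2$. Second, the first paragraph of the proof of Proposition 3.1(i) gives $\mathcal{I}_C\subseteq\mathcal{M}_{\scot}$ without any dimension hypothesis, so $\Ext^i_R(G,X)=0$ for all $i\ge 1$, all $X\in\mathcal{I}_C$, and all $G$ with $\fd_R(G)<\infty$.

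For (i)$\Rightarrow$(ii), by Lemma 2.10(i) the module $M$ has an $\mathcal{I}_C$-precover $\varphi\colon X\to M$ with $X\in\mathcal{I}_C$, which (i) renders surjective, giving $0\to K\to X\to M\to 0$. For an arbitrary $Y\in\mathcal{I}_C$, applying $\Hom_R(Y,-)$ produces the exact strand $\Ext^1_R(Y,X)\to\Ext^1_R(Y,M)\to\Ext^2_R(Y,K)$. The left term vanishes since $X\in\mathcal{I}_C\subseteq\mathcal{M}_{\scot}$ and $\fd_R(Y)<\infty$, and the right term vanishes since $\pd_R(Y)\le 1$. Hence $\Ext^1_R(\mathcal{I}_C,M)=0$, i.e. $M\in\mathcal{M}^C_{\ci}$.

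For (ii)$\Rightarrow$(i), I would fix a surjection $\theta\colon P\to M$ with $P$ projective. As $P\in\mathcal{A}_C(R)$, Lemma 2.10(ii) makes its $\mathcal{I}_C$-preenvelope $\tau\colon P\to\Hom_R(C,E)$ injective. The crucial point is that $Y:=\coker\tau$ again lies in $\mathcal{I}_C$: from $0\to P\to\Hom_R(C,E)\to Y\to 0$ the two-out-of-three property gives $Y\in\mathcal{A}_C(R)$, so $\Tor^R_1(C,Y)=0$ and tensoring with $C$ preserves exactness; using $C\otimes_R\Hom_R(C,E)\cong E$ (as $E\in\mathcal{B}_C(R)$) and $\id_R(C\otimes_R P)\le\id_R(C)\le 1$, the resulting sequence $0\to C\otimes_R P\to E\to C\otimes_R Y\to 0$ forces $C\otimes_R Y$ to be injective, and then $Y\cong\Hom_R(C,C\otimes_R Y)\in\mathcal{I}_C$ because $Y\in\mathcal{A}_C(R)$. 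Now I would form the push-out of $\theta$ and $\tau$; a routine diagram chase yields a short exact sequence $0\to M\to X\to Y\to 0$ together with an epimorphism $\Hom_R(C,E)\twoheadrightarrow X$. Since $Y\in\mathcal{I}_C$, hypothesis (ii) gives $\Ext^1_R(Y,M)=0$, so the sequence splits, $M$ is a direct summand of $X$, and composing the splitting with the epimorphism exhibits $M$ as a quotient of $\Hom_R(C,E)\in\mathcal{I}_C$. Any $\mathcal{I}_C$-precover $\psi$ of $M$ must then be surjective, since this quotient map factors through $\psi$.

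The main obstacle is precisely the claim $Y=\coker\tau\in\mathcal{I}_C$ in the second implication: this is where the length-one bound $\id_R(C)\le 1$ is indispensable, as it collapses the $\mathcal{I}_C$-coresolution of the projective $P$ after a single step. A secondary technical point, used only in (i)$\Rightarrow$(ii), is the passage $\fd_R\Hom_R(C,I)\le 1\Rightarrow\pd_R\Hom_R(C,I)\le 1$; I would record at the outset that $\id_R(C)<\infty$ forces $C$ dualizing and hence $\dim R<\infty$ (indeed $\dim R\le 1$), which is what legitimizes the appeal to \cite[Corollary 3.4]{F1}.
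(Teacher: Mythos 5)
Your proof is correct and follows essentially the same route as the paper's: the same use of $\pd_R(\mathcal{I}_C)\le 1$ to kill $\Ext^2$ in (i)$\Rightarrow$(ii), and the same push-out construction for (ii)$\Rightarrow$(i), with the identical key step that the cokernel of the injective $\mathcal{I}_C$-preenvelope of a projective module lands back in $\mathcal{I}_C$ because $\id_R(C\otimes_R P)\le 1$. The only cosmetic difference is that you deduce $\Ext^1_R(\mathcal{I}_C,\mathcal{I}_C)=0$ from $\mathcal{I}_C\subseteq\mathcal{M}_{\scot}$ (Proposition 3.1(i)) together with $\fd_R(\mathcal{I}_C)<\infty$, whereas the paper computes it directly via $\Ext^1_R(\mathcal{I}_C,\Hom_R(C,E))\cong\Hom_R(\Tor^R_1(\mathcal{I}_C,C),E)=0$; these rest on the same underlying fact.
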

\begin{proof}
(i) $\Longrightarrow$ (ii).  By the same argument as in the proof of Theorem 4.27, we see that all modules in $ \mathcal{I}_C $ have finite projective dimension less than or equal to 1. By assumption, there exists an exact sequence \\
\centerline{$ 0 \rightarrow K \rightarrow \Hom_R(C,E) \rightarrow M \rightarrow 0 $, $ (*) $}
in which $ E $ is injective and $ K = \ker (\Hom_R(C,E) \rightarrow M) $. Application of the functor $ \Hom_R(\mathcal{I}_C ,-) $ on $ (*) $ yields an exact sequence \\
\centerline{$ \Ext_R^1(\mathcal{I}_C , \Hom_R(C,E))  \rightarrow \Ext_R^1(\mathcal{I}_C , M) \rightarrow \Ext_R^2(\mathcal{I}_C , K) = 0 $.}
Observe that $  \Ext_R^1(\mathcal{I}_C , \Hom_R(C,E)) \cong \Hom_R(\Tor^R_1(\mathcal{I}_C , C) , E) = 0 $ by \cite[Theorem 3.2.1]{EJ1}. It follows that $ \Ext_R^1(\mathcal{I}_C , M) = 0$, whence $M \in \mathcal{M}^C_{\ci} $.

(ii) $\Longrightarrow$ (i). Assume that $M \in \mathcal{M}^C_{\ci} $. Clearly, $M$ is a homomorphic image of a projective module, say $P$.  Now the $\mathcal{I}_C$-preenvelope of $P$ is injective by Lemma 2.10(ii), since
$ P \in \mathcal{A}_C(R) $. Consider the following push-out diagram:
 \begin{displaymath}
 \xymatrix{
 	&&  0  \ar[d] &  0  \ar[d]  \\
 	0 \ar[r] & K \ar[r] \ar@{=}[d] & P \ar[r] \ar[d] & M \ar[r] \ar[d] & 0 \\
 	0 \ar[r] & K \ar[r] & \Hom_R(C,E) \ar[r] \ar[d] & D \ar[r] \ar[d] & 0 \\
 	&&  X \ar@{=}[r] \ar[d] & X \ar[d] \\
 	&&  0 &  0             \\
 }
 \end{displaymath}
 where $K = \ker (P \rightarrow M) $ and $ X = \coker (P \rightarrow \Hom_R(C,E))$. Note that $ X \in \mathcal{A}_C(R) $ since the middle column is exact and that both $P$ and $ \Hom_R(C,E) $ are in $ \mathcal{A}_C(R) $. Therefore $ \Tor_1^R(C,X)  = 0$. Hence applying the functor $ C \otimes_R - $ on the the middle column, we get the exact sequence \\
 \centerline{$ 0 \rightarrow C \otimes_R P  \rightarrow E \rightarrow C \otimes_R X \rightarrow 0 $.}
 Next, note that $ \id_R(C \otimes_R P) \leq 1 $, since $P$ is projective and that $ \id_R(C) \leq 1 $. Thus $ C \otimes_R X $ must be injective since it is a first cosyzygy of $ C \otimes_R P $. So that $ X \in \mathcal{I}_C $ by Lemma 2.5(i). It follows that
 $ \Ext_R^1(X , M) = 0 $ since $M$ is $C$-copure injective. Therefore the right hand side column is split, and so there exists an epimorphism
 $ \Hom_R(C , E) \rightarrow M $. Consequently, any $\mathcal{I}_C $-precover of $M$ must be surjective.
\end{proof}
\begin{cor}
	Let $\emph{\id}_R(C) \leq 1$ and let $M$ be an $R$-module. The following are equivalent:
	\begin{itemize}
		\item[(i)]{ Any $ \mathcal{F}_C $-preenvelope of $M$ is injective.}
		\item[(ii)] {$M \in \mathcal{M}^C_{\emph{\cf}} $.}	
	\end{itemize}
\end{cor}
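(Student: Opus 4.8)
The plan is to deduce this corollary from Proposition 5.1 by dualizing with the functor $(-)^{\vee} = \Hom_R(-,E)$, where $E$ is a fixed injective cogenerator, together with Lemma 4.5(ii), which interchanges the classes $\mathcal{M}^C_{\cf}$ and $\mathcal{M}^C_{\ci}$. The mechanism rests on three elementary observations: for a flat module $F$ there is an adjunction isomorphism $\Hom_R(C \otimes_R F, E) \cong \Hom_R(C, \Hom_R(F,E))$ with $\Hom_R(F,E)$ injective, so the $E$-dual of an object of $\mathcal{F}_C$ lies in $\mathcal{I}_C$; dually $\Hom_R(\mathcal{I}_C, E) \subseteq \mathcal{F}_C$, the inclusion already used in the proof of Proposition 4.6; and, since $E$ is a cogenerator, a homomorphism is injective precisely when its $E$-dual is surjective, while the evaluation map $M \to M^{\vee\vee}$ is always injective. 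The hypothesis $\id_R(C) \leq 1$ enters only through Proposition 5.1, where it guarantees that every module in $\mathcal{I}_C$ has projective, hence flat, dimension at most $1$.

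For (i) $\Rightarrow$ (ii), I would pick an $\mathcal{F}_C$-preenvelope $\varphi \colon M \to C \otimes_R F$, which exists because $\mathcal{F}_C$ is preenveloping by Lemma 2.10(i). By hypothesis $\varphi$ is injective, so $\varphi^{\vee}$ is surjective onto $M^{\vee}$ and has source $\Hom_R(C \otimes_R F, E) \in \mathcal{I}_C$. Factoring this surjection through an arbitrary $\mathcal{I}_C$-precover of $M^{\vee}$ forces that precover to be surjective as well; Proposition 5.1 then gives $M^{\vee} \in \mathcal{M}^C_{\ci}$, and Lemma 4.5(ii) returns $M \in \mathcal{M}^C_{\cf}$. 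A self-contained alternative avoids Proposition 5.1 altogether: the short exact sequence $0 \to M \to C \otimes_R F \to L \to 0$ coming from the injective preenvelope yields the exact sequence $\Tor^R_2(\mathcal{I}_C, L) \to \Tor^R_1(\mathcal{I}_C, M) \to \Tor^R_1(\mathcal{I}_C, C \otimes_R F)$, whose outer terms vanish, the first because modules in $\mathcal{I}_C$ have flat dimension at most $1$ and the second by the vanishing $\Tor^R_i(\mathcal{I}_C, \mathcal{F}_C) = 0$ recorded in the proof of Proposition 4.15(iv); hence $\Tor^R_1(\mathcal{I}_C, M) = 0$.

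For (ii) $\Rightarrow$ (i), I would begin with $M \in \mathcal{M}^C_{\cf}$, so $M^{\vee} \in \mathcal{M}^C_{\ci}$ by Lemma 4.5(ii), and apply Proposition 5.1 to produce a surjective $\mathcal{I}_C$-precover $s \colon \Hom_R(C,I) \twoheadrightarrow M^{\vee}$ (precovers exist since $\mathcal{I}_C$ is covering, again by Lemma 2.10(i)). Dualizing, $s^{\vee} \colon M^{\vee\vee} \hookrightarrow \Hom_R(\Hom_R(C,I), E)$ is injective with target in $\mathcal{F}_C$, and precomposition with the evaluation map $M \hookrightarrow M^{\vee\vee}$ exhibits an injection of $M$ into an object of $\mathcal{F}_C$. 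Any $\mathcal{F}_C$-preenvelope of $M$ must factor this injection, and is therefore itself injective, which is (i).

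The main obstacle, and the reason the two directions are not carried out by one symmetric appeal to duality, is that $\mathcal{I}_C$-precovers and $\mathcal{F}_C$-preenvelopes are not literally dual notions: not every object of $\mathcal{I}_C$ is the $E$-dual of an object of $\mathcal{F}_C$, since not every injective module has the form $\Hom_R(F,E)$ with $F$ flat. Consequently one cannot simply assert that $\varphi^{\vee}$ is an $\mathcal{I}_C$-precover. The fix is to use in each direction only the one-sided consequence that does survive dualization, namely factoring a single surjection through a precover in (i) $\Rightarrow$ (ii), and manufacturing a single injection into $\mathcal{F}_C$ in (ii) $\Rightarrow$ (i). The residual verifications (exactness and faithfulness of $(-)^{\vee}$, the Hom-tensor adjunction, and $\Hom_R(\mathcal{I}_C, E) \subseteq \mathcal{F}_C$) are routine and already present in the text.
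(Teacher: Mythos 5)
Your proof is correct and follows essentially the same route as the paper: both directions dualize with $(-)^{\vee}=\Hom_R(-,E)$, use the inclusions $\Hom_R(\mathcal{F}_C,E)\subseteq\mathcal{I}_C$ and $\Hom_R(\mathcal{I}_C,E)\subseteq\mathcal{F}_C$, and reduce to Proposition 5.1 via Lemma 4.5(ii). Your extra self-contained argument for (i)$\Rightarrow$(ii) via the vanishing of $\Tor^R_2(\mathcal{I}_C,L)$ and $\Tor^R_1(\mathcal{I}_C,C\otimes_RF)$ is also valid and bypasses Proposition 5.1, but the main line of reasoning coincides with the paper's.
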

\begin{proof}
	 Assuma that $E$ is an injective cogenerator and that $ (-)^{\vee} = \Hom_R(-,E) $.
	
(i) $\Longrightarrow$ (ii). Suppose that $ M \rightarrow C \otimes_R F $ is a $ \mathcal{F}_C $-preenvelope of $M$. Then $ (C \otimes_R F)^{\vee} \rightarrow  M^{\vee} $	is surjective. No since $ (C \otimes_R F)^{\vee} \cong \Hom_R(C, F^{\vee}) \in \mathcal{I}_C $, it follows that any $ \mathcal{I}_C $-precover of $ M^{\vee} $ is surjective. Therefore, by Proposition 5.1, $ M^{\vee} \in \mathcal{M}^C_{\ci}$, whence
$ M \in \mathcal{M}^C_{\cf}$ by Lemma 4.5(ii).

(ii) $\Longrightarrow$ (i). By Lemma 4.5(ii), $ M^{\vee} \in \mathcal{M}^C_{\ci}$, and then any $ \mathcal{I}_C $-precover of $M^{\vee}$ is surjective by Proposition 5.1. Suppose that $ \Hom_R(C,I) \rightarrow M^{\vee} $ is a $ \mathcal{I}_C $-precover of $M^{\vee}$. Then we have an injection
$  M^{\vee \vee} \rightarrow \Hom_R(C,I)^{\vee} $. But since $ \Hom_R(C,I)^{\vee} \cong C \otimes I^{\vee} \in \mathcal{F}_C  $ by \cite[Theorem 3.2.11]{EJ1}, and that $ M \hookrightarrow M^{\vee \vee} $, we see that any $ \mathcal{F}_C $-preenvelope of $M$ is injective.
\end{proof}
The following proposition is dual to the Proposition 5.1. Note that, in general, the class $ \mathcal{P}_C  $ is not preenveloping.
\begin{prop}
	Let $\emph{\id}_R(C) \leq 1$ and let $M$ be an $R$-module. The following are equivalent:
	\begin{itemize}
		\item[(i)]{$M$ can be embedded in a $C$-projective $R$-module.}
		\item[(ii)] {$M \in \mathcal{M}^C_{\emph{\cp}} $.}	
	\end{itemize}
\end{prop}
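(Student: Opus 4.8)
The plan is to dualize the proof of Proposition 5.1, interchanging the roles of $\mathcal{A}_C(R)$ and $\mathcal{B}_C(R)$, of $C\otimes_R-$ and $\Hom_R(C,-)$, and of precovers and preenvelopes. Throughout I would use, exactly as Proposition 5.1 does, that $\id_R(C)\leq 1$ forces every module in $\mathcal{I}_C$ to have projective dimension at most $1$ and every module in $\mathcal{F}_C$ to have injective dimension at most $1$.

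For (i)$\Rightarrow$(ii) I would start from an embedding $0\to M\to C\otimes_R P\to L\to 0$ with $P$ projective, and apply $\Hom_R(-,C\otimes_R F)$ for an arbitrary flat $F$ to obtain the exact sequence $\Ext^1_R(C\otimes_R P,C\otimes_R F)\to \Ext^1_R(M,C\otimes_R F)\to \Ext^2_R(L,C\otimes_R F)$. The right-hand term vanishes because $\id_R(C\otimes_R F)\leq 1$. The left-hand term vanishes because $C\otimes_R F\in\mathcal{F}_C\subseteq\mathcal{B}_C(R)$, so $\Ext^i_R(C,C\otimes_R F)=0$ for all $i\geq 1$ by the definition of the Bass class, while $C\otimes_R P$ is a direct summand of a direct sum of copies of $C$. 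Hence $\Ext^1_R(M,\mathcal{F}_C)=0$, i.e. $M\in\mathcal{M}^C_{\cp}$.

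For (ii)$\Rightarrow$(i) I would embed $M$ into an injective module $E$ (for instance $E=E(M)$); since $E\in\mathcal{B}_C(R)$, Lemma 2.10(ii) gives a surjective $\mathcal{P}_C$-precover $C\otimes_R P\twoheadrightarrow E$, say with kernel $K$. Forming the pull-back of $M\hookrightarrow E$ and $C\otimes_R P\twoheadrightarrow E$ produces a commutative diagram whose bottom row is $0\to K\to C\otimes_R P\to E\to 0$, whose middle column is $0\to D\to C\otimes_R P\to N\to 0$, and whose top row is $0\to K\to D\to M\to 0$. From the bottom row, $K\in\mathcal{B}_C(R)$, since $C\otimes_R P\in\mathcal{P}_C\subseteq\mathcal{B}_C(R)$ and $E$ is injective. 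Applying $\Hom_R(C,-)$ to the bottom row (which stays exact because $\Ext^1_R(C,K)=0$) and using $\Hom_R(C,C\otimes_R P)\cong P$ yields $0\to\Hom_R(C,K)\to P\to\Hom_R(C,E)\to 0$; as $\pd_R\Hom_R(C,E)\leq 1$, the first syzygy $\Hom_R(C,K)$ is projective, so $K\cong C\otimes_R\Hom_R(C,K)\in\mathcal{P}_C\subseteq\mathcal{F}_C$ (the isomorphism holding since $K\in\mathcal{B}_C(R)$). Finally $M\in\mathcal{M}^C_{\cp}$ gives $\Ext^1_R(M,K)=0$, so the top row splits and $M$ is a direct summand of $D$; composing with the injection $D\hookrightarrow C\otimes_R P$ from the middle column embeds $M$ into the $C$-projective module $C\otimes_R P$.

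The hard part will be the (ii)$\Rightarrow$(i) direction: I must orient the pull-back so that the module $K$ whose vanishing $\Ext^1_R(M,K)$ splits the desired sequence is exactly the kernel appearing in the bottom row, and then verify $K\in\mathcal{F}_C$. That verification is where $\id_R(C)\leq 1$ is essential — it forces $\Hom_R(C,E)$ to have projective dimension at most $1$, hence its first syzygy $\Hom_R(C,K)$ to be projective; the Bass-class isomorphism $K\cong C\otimes_R\Hom_R(C,K)$ then places $K$ in $\mathcal{P}_C\subseteq\mathcal{F}_C$, which is precisely the class against which copure projectivity of $M$ supplies the vanishing.
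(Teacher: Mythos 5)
Your proposal is correct and follows essentially the same route as the paper's own proof: the same application of $\Hom_R(-,\mathcal{F}_C)$ to an embedding $0\to M\to C\otimes_R P\to L\to 0$ for (i)$\Rightarrow$(ii), and for (ii)$\Rightarrow$(i) the same pull-back of $M\hookrightarrow E(M)$ against a surjective $\mathcal{P}_C$-precover of $E(M)$, with the kernel shown to lie in $\mathcal{P}_C$ via the Bass class and the syzygy/projective-dimension-one argument before splitting off $M$. The only cosmetic difference is that you justify $\Ext^1_R(C\otimes_R P,C\otimes_R F)=0$ by writing $C\otimes_R P$ as a summand of a direct sum of copies of $C$, where the paper uses the isomorphism $\Ext^1_R(C\otimes_R P,-)\cong\Hom_R(P,\Ext^1_R(C,-))$; these are interchangeable.
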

\begin{proof}
	(i) $\Longrightarrow$ (ii).  Note that $ \id_R(C \otimes_R P) \leq 1 $ for any projective module $P$ Since $ \id_R(C) \leq 1 $. Consequently, all modules in $ \mathcal{P}_C $ have finite injective dimension less than or equal to 1.  By assumption, there exists an exact sequence \\
	\centerline{$ 0 \rightarrow M \rightarrow C \otimes_R P \rightarrow K \rightarrow 0 $, $ (\dagger) $}
	in which $ P $ is projective and $ K = \coker (M \rightarrow C \otimes_R P) $. Application of the functor $ \Hom_R(- ,\mathcal{F}_C) $
	on $ (\dagger) $ yields an exact sequence \\
	\centerline{$ \Ext_R^1( C \otimes_R P, \mathcal{F}_C)  \rightarrow \Ext_R^1(M, \mathcal{F}_C) \rightarrow \Ext_R^2(K , \mathcal{F}_C) = 0 $.}
	Observe that $  \Ext_R^1(C \otimes_R P, \mathcal{F}_C) \cong \Hom_R(P, \Ext_R^1(C , \mathcal{F}_C)) = 0 $. Therefore
	 $ \Ext_R^1(M,\mathcal{F}_C ) = 0$, whence $M \in \mathcal{M}^C_{\cp} $.
	
	(ii) $\Longrightarrow$ (i). Assume that $M \in \mathcal{M}^C_{\cp} $. Clearly, $M$ can be embedded in an injective module, say $E$.  Now the $\mathcal{P}_C$-precover of $E$ is surjective by Lemma 2.10(ii), since
	$ E \in \mathcal{B}_C(R) $. Consider the following pull-back diagram:
	 \begin{displaymath}
	 \xymatrix{
	 	&  0  \ar[d] &  0  \ar[d]  \\
	 	&  N \ar@{=}[r] \ar[d] & N \ar[d] \\
	 	0 \ar[r] & D \ar[r] \ar[d] & C \otimes_R P \ar[r] \ar[d] & X \ar[r] \ar@{=}[d] & 0 \\
	 	0 \ar[r] & M \ar[r] \ar[d] & E \ar[r] \ar[d]             & X \ar[r]   & 0 \\
	           	&  0             & 0  \\}
	 \end{displaymath}
	where $N = \ker (C \otimes_R P \rightarrow E) $ and $ X = \coker (M \rightarrow E)$. Note that $ N \in \mathcal{B}_C(R) $ since the middle column is exact and that both $E$ and $ C \otimes_R P $ are in $ \mathcal{B}_C(R) $. Therefore $ \Ext^1_R(C,N)  = 0$. Hence applying the functor $ \Hom_R(C,-) $ on the the middle column, we get the exact sequence \\
	\centerline{$ 0 \rightarrow \Hom_R(C,N)  \rightarrow P \rightarrow \Hom_R(C, E) \rightarrow 0 $.}
	Next, note that $ \pd_R(\Hom_R(C,E)) \leq 1 $ by \cite[Corollary 3.4]{F1}, and hence $ \Hom_R(C,N) $ must be projective since it is a first syzygy of $ \Hom_R(C,E) $. So that $ N \in \mathcal{P}_C $ by Lemma 2.5(ii). It follows that
	$ \Ext_R^1(M , N) = 0 $ since $M \in \mathcal{M}^C_{\cp} $. Therefore the left hand side column is split, and so there exists an embedding
	$ M \hookrightarrow C \otimes_R P $.
\end{proof}
The following is a generalization of \cite[Theorem 2.5]{EJ5}.
\begin{cor}
Let $ \emph{\id}_R(C) \leq 1 $ and let $M \in \mathcal{A}_C(R)$. Then any $\mathcal{I}_C $-precover of $\emph{\Ext}_R^1(F,M) $ is surjective, for any flat $R$-module $F$.
\end{cor}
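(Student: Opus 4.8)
The plan is to reduce the statement to a membership in the class $\mathcal{M}^C_{\ci}$ and then invoke Proposition 5.1. Since $\id_R(C)\leq 1$, the argument in the proof of Proposition 5.1 shows that every module in $\mathcal{I}_C$ has projective dimension at most $1$; consequently $\Ext^i_R(\mathcal{I}_C,-)=0$ for all $i\geq 2$, so that $\mathcal{M}^C_{\ci}=\mathcal{M}^C_{\sci}$. By Proposition 5.1 applied to the module $\Ext^1_R(F,M)$, it therefore suffices to prove that $\Ext^1_R(F,M)\in\mathcal{M}^C_{\ci}$.

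Since $M\in\mathcal{A}_C(R)$, Lemma 2.10(ii) provides an injective $\mathcal{I}_C$-preenvelope, i.e. an exact sequence $0\rightarrow M\rightarrow\Hom_R(C,I)\rightarrow L\rightarrow 0$ with $I$ injective and $L$ its cokernel. First I would check that $L\in\mathcal{M}^C_{\ci}$: applying $\Hom_R(\mathcal{I}_C,-)$ to this sequence yields, for each $J\in\mathcal{I}_C$, an exact piece $\Ext^1_R(J,\Hom_R(C,I))\rightarrow\Ext^1_R(J,L)\rightarrow\Ext^2_R(J,M)$. The outer terms vanish: the left one because $\mathcal{I}_C\subseteq\mathcal{GI}_C\subseteq\mathcal{M}^C_{\sci}=\mathcal{M}^C_{\ci}$, and the right one because $\pd_R(J)\leq 1$. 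Hence $\Ext^1_R(\mathcal{I}_C,L)=0$, that is, $L\in\mathcal{M}^C_{\ci}$.

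Next I would apply $\Hom_R(F,-)$ to the same short exact sequence. The key vanishing is $\Ext^1_R(F,\Hom_R(C,I))\cong\Hom_R(\Tor^R_1(C,F),I)=0$, which holds by \cite[Theorem 3.2.1]{EJ1} because $F$ is flat; this is exactly the isomorphism already exploited in Proposition 3.1(i). The resulting exact sequence $\Hom_R(F,L)\rightarrow\Ext^1_R(F,M)\rightarrow\Ext^1_R(F,\Hom_R(C,I))=0$ exhibits $\Ext^1_R(F,M)$ as a quotient of $\Hom_R(F,L)$. Since $L\in\mathcal{M}^C_{\ci}$, Proposition 4.16(i) gives $\Hom_R(F,L)\in\mathcal{M}^C_{\ci}$, and because $\id_R(C)\leq 1$, Corollary 4.28 ensures that the quotient $\Ext^1_R(F,M)$ of the strongly $C$-copure injective module $\Hom_R(F,L)$ is again in $\mathcal{M}^C_{\sci}=\mathcal{M}^C_{\ci}$. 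An appeal to Proposition 5.1 then finishes the proof.

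The individual steps are routine once assembled; the main obstacle is organisational, namely spotting that the preenvelope cokernel $L$ is already $C$-copure injective and that $\Ext^1_R(F,M)$ is a quotient of $\Hom_R(F,L)$, so that the hypothesis $\id_R(C)\leq 1$ can be fed into the quotient-closure of $\mathcal{M}^C_{\sci}$ supplied by Corollary 4.28. Without that closure property the final step collapses, which is consistent with the role of the $\leq 1$ hypothesis.
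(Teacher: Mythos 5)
Your proof is correct and follows essentially the same route as the paper: form the injective $\mathcal{I}_C$-preenvelope of $M$, show its cokernel $L$ is $C$-copure injective, realize $\Ext^1_R(F,M)$ as a quotient of $\Hom_R(F,L)$, and use $\pd_R(\mathcal{I}_C)\leq 1$ to transfer copure injectivity to that quotient before invoking Proposition 5.1. The only slip is a citation: the fact that $\Hom_R(F,L)\in\mathcal{M}^C_{\ci}$ for $F$ flat is Proposition 4.15(i), not 4.16(i); beyond that, your direct $\Ext$ computation for $L$ and your appeal to Corollary 4.28 in place of the paper's explicit kernel argument are only cosmetic differences.
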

\begin{proof}
Since $M \in \mathcal{A}_C(R)$, every $ \mathcal{I}_C $-preenvelope of $M$ is injective by Lemma 2.10(ii). Hence there is an exact sequence \\
	 \centerline{$ 0 \rightarrow M  \rightarrow \Hom_R(C,E) \rightarrow L \rightarrow 0 $, $ (*) $}
	 in which $ E $ is injective and $ L = \coker (M  \rightarrow \Hom_R(C,E)) $. Now any $\mathcal{I}_C $-precover of $L$ is surjective, and hence $L$ is $C$-copure injective by Proposition 5.1. Apply the functor $ \Hom_R(F,-) $ to the exact sequence $ (*) $ to induce
	 an exact sequence \\
	  \centerline{$ \Hom_R(F,L)  \rightarrow \Ext^1_R(F,M) \rightarrow \Ext^1_R(F,\Hom_R(C,E)) = 0 $.}
	  Next consider the exact sequence \\
	  \centerline{$0 \rightarrow K \rightarrow \Hom_R(F,L)  \rightarrow \Ext^1_R(F,M) \rightarrow 0 $,}
	  in which $ K = \ker (\Hom_R(F,L)  \rightarrow \Ext^1_R(F,M)) $, to induce an exact sequence \\
	   \centerline{$ 0 = \Ext^1_R(\mathcal{I}_C ,\Hom_R(F,L))  \rightarrow \Ext^1_R(\mathcal{I}_C,\Ext^1_R(F,M))  \rightarrow \Ext^2_R(\mathcal{I}_C,K) $.}
	   But all modules in $ \mathcal{I}_C $ have finite projective dimensions less than or equal to 1. Hence  $ \Ext^2_R(\mathcal{I}_C,K) = 0$ and so $ \Ext^1_R(\mathcal{I}_C,\Ext^1_R(F,M)) = 0 $. Therefore $ \Ext^1_R(F,M) $ is $C$-copure injective, and thus we are done by Proposition 5.1. 	
\end{proof}
The following theorem is a generalization of \cite[theorem 3.3]{EJ5}.
\begin{thm}
Let $ \emph{\id}_R(C) \leq 1 $ and let $M \in \mathcal{A}_C(R)$. The following are equivalent:
	\begin{itemize}
		\item[(i)]{$ M \in \mathcal{M}_{\emph{\Cot}}$.}
		\item[(ii)]{$ C \otimes_R M \in \mathcal{M}^C_{\emph{\Cot}}$.}
		\item[(iii)]{$ \emph{\Ext}^1_R(E(C), C \otimes_R M) = 0 $.}	
	\end{itemize}
\end{thm}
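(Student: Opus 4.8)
The plan is to prove the two equivalences (i) $\Leftrightarrow$ (ii) and (ii) $\Leftrightarrow$ (iii) separately, isolating all the genuine content in the passage between $C$-cotorsion and the single test module $E(C)$.

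First I would dispose of (i) $\Leftrightarrow$ (ii). Since any flat module $F$ has finite projective dimension it lies in $\mathcal{A}_C(R)$, and $M \in \mathcal{A}_C(R)$ by hypothesis; hence both $C \otimes_R F$ and $C \otimes_R M$ lie in $\mathcal{B}_C(R)$. Exactly as in the chain of isomorphisms in the proof of Proposition 4.11, \cite[Theorem 4.1]{TW} together with \cite[Corollary 4.2]{TW} yields $\Ext^i_R(C \otimes_R F, C \otimes_R M) \cong \Ext^i_R(F, M)$ for all $i \geq 1$. Letting $F$ range over all flat modules, the left-hand sides vanish precisely when $C \otimes_R M \in \mathcal{M}^C_{\Cot}$ and the right-hand sides vanish precisely when $M \in \mathcal{M}_{\Cot}$, which gives (i) $\Leftrightarrow$ (ii).

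The geometric heart is the observation that $E(C) \in \mathcal{F}_C(R)$. Being injective, $E(C)$ lies in $\mathcal{B}_C(R)$, so $E(C) \cong C \otimes_R \Hom_R(C, E(C))$ and it suffices to show that $G := \Hom_R(C, E(C))$ is flat. Here I would argue locally: one has $E(C) = \bigoplus_{\fp \in \Ass C} E(R/\fp)^{(\alpha_\fp)}$ with $\Ass C = \Ass R$, while $\id_R C \leq 1$ forces $C_\fp$ to be dualizing for $R_\fp$ with $\depth R_\fp = 0$, so each such $R_\fp$ is Artinian and Matlis duality gives $\Hom_{R_\fp}(C_\fp, E(R/\fp)) \cong R_\fp$. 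Thus $G \cong \bigoplus_{\fp} R_\fp^{(\alpha_\fp)}$ is flat and $E(C) \in \mathcal{F}_C(R)$. Granting this, (ii) $\Rightarrow$ (iii) is immediate from the definition of $C$-cotorsion, and via the displayed isomorphism of the previous paragraph with $F$ replaced by $G$ the implication (i) $\Rightarrow$ (iii) is equally immediate.

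The substantive direction is (iii) $\Rightarrow$ (i); this is where the generalization of \cite[Theorem 3.3]{EJ5} lives and is the main obstacle. Using the same isomorphism, (iii) reads $\Ext^1_R(G, M) = 0$ for the single flat module $G = \bigoplus_{\fp \in \Ass C} R_\fp^{(\alpha_\fp)}$, and the task is to upgrade this to $\Ext^1_R(F, M) = 0$ for every flat $F$; that is, to show that $G$ is a test module for cotorsion. The plan is to exploit that $\id_R C \leq 1$ forces the ambient behaviour to be at most one-dimensional, so that every flat module has projective dimension at most one and fits into a short exact sequence built from the localizations of $R$ at $\Ass C$; feeding such a sequence, together with the vanishing $\Ext^1_R(G, M) = 0$, through the machinery of this section (Proposition 5.1 and Corollary 5.4, which together show that $\Ext^1_R(F, M)$ is $C$-copure injective whenever $M \in \mathcal{A}_C(R)$) should force $\Ext^1_R(F, M) = 0$. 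The delicate point, and the step I expect to cost the most, is the bookkeeping that reduces an arbitrary flat $F$ to the distinguished module $G$ while keeping control of the higher $\Ext$ against the indecomposable injective summands $E(R/\fp)$ of $E(C)$.
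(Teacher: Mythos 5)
Your treatment of (i) $\Leftrightarrow$ (ii) coincides with the paper's, and your direct verification that $E(C)\in\mathcal{F}_C(R)$ (localizing at the associated primes of $C$ and using that $\id_{R_\fp}(C_\fp)=0$ forces $C_\fp\cong E(R/\fp)$ and $\Hom_{R_\fp}(C_\fp,E(R/\fp))\cong R_\fp$) is a legitimate substitute for the paper's citation of [RT, Theorem 3.3]; so (ii) $\Rightarrow$ (iii) is in order. The problem is that (iii) $\Rightarrow$ (i) is exactly the substantive implication, and what you offer for it is a plan rather than a proof. Neither of the two mechanisms you propose closes the gap: a structure theory writing an arbitrary flat module as an extension ``built from the localizations of $R$ at $\Ass C$'' is not available (flat modules over a one-dimensional Noetherian ring need not decompose this way, and you give no construction), and Proposition 5.1 together with Corollary 5.4 only shows that $\Ext^1_R(F,M)$ is $C$-copure injective, i.e.\ admits a surjective $\mathcal{I}_C$-precover --- it gives no vanishing.

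The paper's actual argument for (iii) $\Rightarrow$ (ii) runs entirely on the $\mathcal{B}_C$ side and never decomposes $F$ itself. Given a flat $F$, embed $C\otimes_R F$ into its injective hull, $0\to C\otimes_R F\to E(C\otimes_R F)\to L\to 0$. Since $E(C\otimes_R F)$ is $C$-flat, one gets $C$-$\fd_R(L)\le 1$, hence $\pd_R(\Hom_R(C,L))\le 1$, and Foxby equivalence gives $\Ext^2_R(L,C\otimes_R M)\cong\Ext^2_R(\Hom_R(C,L),M)=0$; the long exact sequence therefore reduces the vanishing of $\Ext^1_R(C\otimes_R F,C\otimes_R M)$ to that of $\Ext^1_R(E(C\otimes_R F),C\otimes_R M)$. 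The point that makes $E(C)$ a test module is then $\Ass_R(C\otimes_R F)\subseteq\Ass_R(C)$ [M, Theorem 23.2]: $E(C\otimes_R F)$ is a direct sum of copies of the indecomposable injectives $E(R/\fp)$ with $\fp\in\Ass_R(C)$, all of which already occur in $E(C)$, so hypothesis (iii) kills $\Ext^1$ against it. This injective-hull-plus-associated-primes step is the missing idea you would have to supply to complete your outline.
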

\begin{proof}
(i) $\Longleftrightarrow$ (ii).	Let $F$ be a flat $R$-module. On has the isomorphisms 
   \[\begin{array}{rl}
   \Ext^i_R(F,M) &\cong \Ext^i_R(\Hom_R(C , C \otimes_R F), \Hom_R(C , C \otimes_R M)) \\
   &\cong \Ext^i_{\mathcal{P}_C}(C \otimes_R F, C \otimes_R M)\\
   &\cong \Ext^i_R(C \otimes_R F, C \otimes_R M),\\
   \end{array}\]
in which the first isomorphism holds because $ M $ and $F$ are in $  \mathcal{A}_C(R) $, the second isomorphism is from \cite[Theorem 4.1]{TW}, and the last isomorphism is from \cite[Corollary 4.2(a)]{TW} since $ C \otimes_R M \in \mathcal{B}_C(R) $ by \cite[Theorem 2.8(b)]{TW}. Hence $ M \in \mathcal{M}_{\Cot} $ if and only if $C \otimes_R M \in \mathcal{M}^C_{\Cot}$.
	
 (ii) $\Longrightarrow$ (iii). Just note that by \cite[Theorem 3.3]{RT}, $ E(C) $ is $C$-flat.

  (iii) $\Longrightarrow$ (ii).We have to prove that $ \Ext^1_R(\mathcal{F}_C, C \otimes_R M) = 0 $. Assume that $ F $ is a flat $R$-module. Consider the exact sequence \\
 \centerline{$ 0  \rightarrow C \otimes_R F \rightarrow E(C \otimes_R F) \rightarrow L \rightarrow 0 $, $ (*) $}
 in which $ L = \coker(C \otimes_R F \rightarrow E(C \otimes_R F)) $. By \cite[theorem 3.3]{RT}, $ E(C \otimes_R F) $ is $C$-flat. Hence $\pd_R(\Hom_R(C,L)) = C$-$\fd_R(L) \leq 1$ by Lemma 2.5(iii), and then \cite[Corollary 3.4]{F1} implies that $\pd_R(\Hom_R(C,L)) \leq 1$. Also $ C$-$\fd_R(L) \leq 1 $ implies that $ L \in \mathcal{B}_C(R) $. Note that $ C \otimes_R  M \in \mathcal{B}_C(R)$ by \cite[Theorem 2.8]{TW}. Now we have
   \[\begin{array}{rl}
   \Ext^2_R(L, C \otimes_R M) &\cong \Ext^2_{\mathcal{P}_C}(L, C \otimes_R M) \\
   &\cong \Ext^2_R(\Hom_R(C,L), \Hom_R(C,C \otimes_R M))\\
   &\cong \Ext^2_R(\Hom_R(C,L), M)\\
   &= 0,\\
   \end{array}\]
where the first isomorphism is from \cite[Corollary 4.2(a)]{TW}, the second isomorphism  is from \cite[Theorem 4.1]{TW} and the third one holds because $M \in \mathcal{A}_C(R)$. Therefore application of the functor $ \Hom_R( - , C \otimes_R M) $ on the exact sequence $ (*) $ yields an exact sequence \\
 \centerline{$ \Ext^1_R(E(C \otimes_R F) , C \otimes_R M) \rightarrow \Ext^1_R(C \otimes_R F , C \otimes_R M) \rightarrow  0 $.}
 According to \cite[Theorem 23.2]{M}, we have $ \Ass_R(C \otimes_R F) \subseteq \Ass_R(C) $. It follows that $ E(C \otimes_R F) = \underset{\fp \in \mathcal{T}} \oplus E(R/ \fp) $, with $ \mathcal{T} \subseteq \Ass_R(C) $. Hence $ \Ext^1_R(E(C), C \otimes_R M) = 0 $ implies that
 $ \Ext^1_R(E(C \otimes_R F) , C \otimes_R M) = 0 $. Thus $ \Ext^1_R(C \otimes_R F , C \otimes_R M) = 0 $, as wanted.
\end{proof}

\bibliographystyle{amsplain}

\end{document}